\numberwithin{equation}{section}
\newtheorem{THM}{Theorem}
\newcommand\mydots{\hbox to 1em{.\hss.\hss.}}
\newcommand*{\rom}[1]{\expandafter\@slowromancap\romannumeral #1@}
\newcommand{\subscript}[2]{$#1 _ #2$}
\newtheorem{theorem}{Theorem}[section]
\newtheorem{lemma}[theorem]{Lemma}
\newtheorem{proposition}[theorem]{Proposition}
\newtheorem{corollary}[theorem]{Corollary}
\theoremstyle{definition}
\newtheorem{example}[theorem]{Example}
\newtheorem{dfn}[theorem]{Definition}
\newtheorem{remark}[theorem]{Remark}
\NewDocumentCommand{\xhrulefill}{O{}}
 {
  \group_begin:
  \severin_xhrulefill:n { #1 }
  \group_end:
 }
\DeclareMathOperator{\im}{ran}
\newcommand{\unit}{1\!\!1}
\DeclareMathOperator{\vndet}{det_{\Gamma}}
\DeclareMathOperator{\vntr}{tr_{\Gamma}}
\DeclareMathOperator{\vnH}{\mathcal H}
\DeclareMathOperator{\vndim}{\dim_{\mathcal N(\Gamma)}}
\DeclareMathOperator{\GL}{GL}
\DeclareMathOperator{\reals}{\mathbb R}
\DeclareMathOperator{\ceals}{\mathbb C}
\DeclareMathOperator{\dom}{dom}
\DeclareMathOperator{\Vol}{Vol}
\DeclareMathOperator{\hyp}{\mathbb H}
\DeclareMathOperator{\Wh}{Wh}
\DeclareMathOperator{\tr}{tr}
\DeclareMathOperator{\Isom}{Isom}
\DeclareMathOperator{\bundf}{\xrightarrow{F}}
\DeclareMathOperator{\vnN}{\mathcal N}
\DeclareMathOperator{\sob}{\mathcal  W}
\newcommand{\heatR}{e^{-t\Delta_p[E^\rho_{M_R}]}}
\newcommand{\heatH}{e^{-t\Delta_p[E^\rho]}}
\DeclareMathOperator{\clos}{clos}
{}
\DeclareMathOperator{\SO}{\mathrm{SO}}
\DeclareMathOperator{\zvndet}{det_{\Gamma}^{\zeta}}
\title{The $L^2$-torsion for representations of hyperbolic lattices} 
\author{Benjamin Wa{\ss}ermann}
\begin{document}

\maketitle
\tikzset{middlearrow/.style={
        decoration={markings,
            mark= at position 0.5 with {\arrow{#1}} ,
        },
        postaction={decorate}
    }
}

\abstract{We prove equality of analytic and topological $L^2$-torsion associated with an odd-dimensional finite volume hyperbolic manifold and a representation of the fundamental group which extends to the ambient Lie group. This generalizes a previous result due to L\"uck and Schick. Alternatively, this result can be regarded as the $L^2$-analogue of recent work by M\"uller and Rochon. } \\

\section{Introduction}
For $n$ an odd integer, let $M$ be a complete, oriented, finite volume hyperbolic $n$-manifold with empty boundary and fundamental group $\Gamma$. Then $\Gamma$ can be identified with a lattice inside $G \coloneqq \SO_0(n,1)$. Moreover, with $K \coloneqq \SO(n) < G$, we obtain $n$-dimensional hyperbolic space $\hyp^n \cong G/K$ and recover $M$ as the quotient $\Gamma \backslash \hyp^n$. \\
We also fix a complex, finite-dimensional, irreducible representation $\rho$ of $G$ and let $E^\rho \downarrow \hyp^n$ be the $G$-equivariant flat bundle over $\hyp^n$ associated with $\rho$. 
By \cite[Lemma 3.1]{Matsushima:Metric}, one can equip $E^\rho$ with a $G$-equivariant Hermitian metric $h_\rho$, unique up to a positive scalar. In \cite[Section $9$]{Muller:heat}, M\"uller and Pfaff construct from $E^\rho$ and $h_\rho$ the analytic $L^2$-torsion \begin{equation*} T_{(2)}^{An}(M,\rho) \in \reals_{>0}, \end{equation*} also providing some computational tools along the way with aid of the {\itshape Plancherel formula}, at least under the additional assumption that $\rho$ has non-zero highest weight \cite[Equation $(9.5)$]{Muller:heat} (for the formal definition of $T_{(2)}^{An}(M,\rho)$, this assumption is not necessary, as shown in \cite[Corollay $1.3$]{Durland:Novi}). \\
Rather than delving into more explicit computations of $T_{(2)}^{An}(M,\rho)$, writing this paper was inspired by $L^2$-Cheeger-M\"uller type results (to be listed below). Correspondingly, we are going to verify that $T_{(2)}^{An}(M,\rho)$ has a counterpart in the form of the topological $L^2$-torsion \begin{equation*} T^{Top}_{(2)}(M,\rho) \in \reals_{>0}. \end{equation*} For the definition of $T^{Top}_{(2)}(M,\rho)$, a few remarks are in order: Firstly, $T^{Top}_{(2)}(M,\rho)$ is constructed from a choice of finite $\Gamma$-CW complex $X$ which is required to be $\Gamma$-homotopy equivalent to $\hyp^n$. Whenever $\Gamma$ acts cocompactly on $\hyp^n$, one may simply choose $X$ to be a CW-structure of $\hyp^n$ lifted from the compact quotient $\Gamma \backslash \hyp^n$. In general, one may always choose $X$ to be a lift of a CW-structure of $\overline{\Gamma \backslash \hyp^n}$, where $\overline{\Gamma \backslash \hyp^n}$ is a compact manifold with boundary whose interior is diffeomorphic to $\Gamma \backslash \hyp^n$, and whose construction is laid out in \cite[Chapter $4$]{Petronio:hyp} . \\
Secondly, it is crucial that $T^{Top}_{(2)}(M,\rho)$ should only depend on $M$ and $\rho$, but {\bfseries not} on the specific choice of $X$. As carefully explained in \cite[Section $5.3$]{Ich}, this follows from the well-known facts that the representation $\rho$ is unimodular \cite[Lemma $4.3$]{Muller:Tor2} and that the {\itshape Whitehead group} $\Wh(\Gamma)$ of $\Gamma$ vanishes \cite[Lemma $0.12$]{Farrell:Wh}. \\
Thirdly, and perhaps most importantly, the pair $(M,\rho)$ needs to be {\itshape $L^2$-acyclic}, i.e.\ have vanishing topological $L^2$-Betti numbers, and satisfy a {\itshape determinant class} condition. The verification of these two properties will be a side product of our investigation, summarized in Section \ref{FLATRHO} .
The main result of this paper can now be formulated: \begin{THM}[Theorem \ref{HypCheMül}]\label{A} One has \begin{equation} T^{Top}_{(2)}(M,\rho) = T_{(2)}^{An}(M,\rho). \end{equation} \end{THM}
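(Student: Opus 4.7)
My plan is an exhaustion argument modelled on M\"uller and Rochon's treatment of the classical analytic torsion in the non-compact finite-volume setting. I would truncate each cusp of $M$ at height $R$ to obtain a compact hyperbolic manifold with boundary $M_R \subset M$, whose boundary components are flat cusp cross-sections of dimension $n-1$. The $\Gamma$-CW structure $X$ used to define $T^{Top}_{(2)}(M,\rho)$ is, by construction, a $\Gamma$-lift of a CW structure on $\overline{\Gamma \backslash \hyp^n}$, and hence naturally restricts to a $\Gamma$-CW structure modelling $M_R$. So the topological $L^2$-torsion is essentially already defined at the level of $M_R$, and its value is independent of $R$ for $R$ sufficiently large.

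The next step is to apply a version of the L\"uck-Schick $L^2$-Cheeger-M\"uller theorem for compact Riemannian manifolds with boundary on each $M_R$. This should yield an identity
\begin{equation*}
T^{Top}_{(2)}(M_R,\rho) = T_{(2)}^{An}(M_R,\rho,\mathrm{b.c.}) \cdot \mathcal{A}(\partial M_R, \rho),
\end{equation*}
where the anomaly $\mathcal{A}$ is an $L^2$-analogue of the Br\"uning-Ma boundary correction, depending only on the cusp cross-section and on the restriction of $(E^\rho,h_\rho)$ to it. On the analytic side I would then pass to the limit $R \to \infty$: using the explicit horospherical model of the cusps together with the $G$-equivariant description of $E^\rho$ and $h_\rho$, one bounds the heat-trace contribution from the cusp region and establishes
\begin{equation*}
\lim_{R \to \infty} T_{(2)}^{An}(M_R,\rho,\mathrm{b.c.}) = T_{(2)}^{An}(M,\rho).
\end{equation*}
The required estimates rest on spectral-gap information for the twisted Hodge Laplacian on $\hyp^n$ together with the explicit Plancherel-type tools developed by M\"uller-Pfaff, and must be carried out uniformly in $R$ so as to preserve the determinant-class condition and the convergence of the relevant Fuglede-Kadison determinants.

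The main obstacle, as I see it, is verifying that the boundary anomaly $\mathcal{A}(\partial M_R,\rho)$ is trivial, or at least cancels in the limit. This is where the odd-dimensionality of $M$, equivalently the even-dimensionality of the cusp cross-sections, is essential: parity considerations for the Br\"uning-Ma form, combined with the unimodularity of $\rho$ which is already used for the well-definedness of $T^{Top}_{(2)}(M,\rho)$ and the flat product structure of the cusp ends, should force the relevant boundary contributions to vanish on the nose. Assembling the three ingredients --- convergence of analytic torsions, $R$-stability of topological torsions, and vanishing of the anomaly --- then yields Theorem \ref{A}. The analytical heart of the argument is the uniform-in-$R$ control on the heat kernel near the cusps and the corresponding control on the Fuglede-Kadison determinants of the combinatorial Laplacians, which is where I would expect most of the technical work of the paper to be concentrated.
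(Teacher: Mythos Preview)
Your overall architecture --- exhaust $M$ by compact pieces $M_R$, note that the topological torsion is already computed on $M_R$ and is $R$-independent, prove convergence of the analytic torsions, and control the boundary anomaly --- is exactly the strategy the paper follows (adapted from L\"uck--Schick, not from M\"uller--Rochon, whose method the paper explicitly contrasts with its own). Where you diverge from the paper, and where your proposal has a genuine gap, is in the treatment of the anomaly.

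You expect the anomaly $\mathcal{A}(\partial M_R,\rho)$ to vanish on the nose by a parity argument for an $L^2$-Br\"uning--Ma form. This is not what happens, and it is not clear such an argument is available: the anomaly is nonzero for each finite $R$ and only tends to zero as $R\to\infty$. The paper's mechanism is different and is really the new contribution. First, via a general $L^2$-Cheeger--M\"uller theorem for compact manifolds with boundary and arbitrary unimodular representations (proved in the author's companion paper), one obtains
\[
\log\!\left(\frac{T_{(2)}^{An}(\Gamma\backslash M_R,\rho)}{T^{Top}_{(2)}(\Gamma\backslash M_R,\rho)}\right)
=\dim(\rho)\cdot
\log\!\left(\frac{T_{(2)}^{An}(\Gamma\backslash M_R,\unit)}{T^{Top}_{(2)}(\Gamma\backslash M_R,\unit)}\right).
\]
This reduces the anomaly for $\rho$ to the anomaly for the trivial representation, and the latter was already shown by L\"uck--Schick to vanish in the limit $R\to\infty$, essentially because $\Vol(\partial M_R)\to 0$. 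So odd-dimensionality and unimodularity enter, but not through a pointwise parity cancellation of a Br\"uning--Ma term; they enter through the hypotheses needed for the anomaly-comparison theorem and for the well-definedness of the topological torsion.

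A smaller point: for the analytic convergence you invoke ``spectral-gap information'' on $\hyp^n$. The paper does not use a spectral gap; it uses the weaker input that the twisted Hodge Laplacians on $\hyp^n$ and on $M_R$ have \emph{positive Novikov--Shubin invariants}, established uniformly in $R$. This uniform polynomial decay of the spectral density near zero is what drives the large-time convergence of $\int_1^\infty t^{-1}\operatorname{tr}_\Gamma(e^{-t\Delta_p})\,dt$ via dominated convergence, and indeed constitutes the bulk of the technical work.
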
 There are various similar results in the literature, starting with the work of Friedlander, Kappeler, Burghelea and MacDonald, who analyzed the difference of analytic and topological $L^2$-Theorem in two consecutive papers. In \cite{Friedlander:Uni}, they considered arbitrary closed (i.e.\ compact and without boundary) Riemannian manifolds $(M,g)$ and arbitrary {\itshape unitary} $L^2$-acyclic representations $\rho$ of their fundamental group, proving equality of the resulting analytic and topological $L^2$-torsions. In \cite{Friedlander:Bd}, they extended their previous result onto compact manifolds with boundary $(M,g)$, showing that, under the assumption that $g$ is a product near $\partial M$, the $L^2$-torsions are equal up to a constant depending only on $\dim(\rho)$ and $\chi(\partial M)$. Later, Zhang \cite{Zhang:CM} generalized the result of \cite{Friedlander:Uni} for $(M,g)$ still closed, but now $\rho$ being an arbitrary {\itshape unimodular} representation. Finally, the author of this paper generalized both results in \cite{Ich2}, in which he studied compact manifolds with boundary $(M,g)$ with no assumptions on $g$ and arbitrary unimodular representations $\rho$. In fact, the main result of that paper will be put to use in this work. \\ Note, however, that, unlike Theorem \ref{A}, none of the aforementioned results deal with the case when $(M,g)$ is non-compact. One work from the established literature dealing with the non-compact case, and also serving as the main inspiration to Theorem \ref{A} is the classic paper by L\"uck and Schick \cite{Lueck:hyp}, who showed that $T^{Top}_{(2)}(M,\unit) = T_{(2)}^{An}(M,\unit)$ with $\unit \colon G \to \mathbb C^\times$ for $M$ being a non-compact odd-dimensional complete hyperbolic manifold with finite volume and $\unit$ denoting the trivial representation. In fact, we will adapt the main idea of their proof, at the heart of which lies considering an exhaustion $(M_R)_{R \in \reals_{>0}}$ of $\hyp^n$ of $\Gamma$-invariant complete submanifolds with boundary, such that each quotient $\Gamma \backslash M_R \subseteq M$ is compact and homotopy equivalent to $M$. The majority of this paper is devoted to the proof of the following limit formula: 
\begin{THM}[Theorem \ref{anconv}]\label{B} One has \begin{equation} \lim_{R \to \infty} T_{(2)}^{An}(\Gamma \backslash M_R,\rho) = T_{(2)}^{An}(M,\rho). \end{equation} \end{THM}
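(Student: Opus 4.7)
My plan is to adapt the strategy of L\"uck--Schick \cite{Lueck:hyp}, the new ingredient being that we must carry the flat bundle $E^\rho$ through all steps. Both $T_{(2)}^{An}(\Gamma \backslash M_R,\rho)$ and $T_{(2)}^{An}(M,\rho)$ can be written in terms of derivatives at $s=0$ of zeta functions of the shape
\begin{equation*}
\zeta_p^N(s) \;=\; \frac{1}{\Gamma(s)}\int_0^\infty t^{s-1} \,\theta_p^N(t)\,dt,
\qquad
\theta_p^N(t) \;=\; \vnTr\!\big(e^{-t\Delta_p[E^\rho_N]} - P_{\ker\Delta_p}\big),
\end{equation*}
taken with appropriate (absolute/relative) boundary conditions when $N$ has boundary. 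Since the $L^2$-acyclicity and determinant class conditions established in Section~\ref{FLATRHO} force $P_{\ker\Delta_p}=0$ on both $M$ and $\Gamma\backslash M_R$, the goal reduces to showing $\zeta'_p{}^{M_R}(0) \to \zeta'_p{}^M(0)$. I would establish this by splitting the defining integral at $t=1$ and treating the small-time and large-time regimes separately, then invoking dominated convergence to exchange the limit in $R$ with the Mellin integral and its analytic continuation.

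\textbf{Small time.} On the interior of $M_R$, the principle of not feeling the boundary gives super-exponential estimates, of the form $\mathcal{O}(e^{-d(x,\partial M_R)^2/Ct})$, for the difference of fiberwise heat kernel traces of $\heatR(x,x)$ and $\heatH(x,x)$. Because $\heatH(x,x)$ is $G$-invariant its fiber trace is a constant independent of $x$, and short-time asymptotics bounded uniformly by $t^{-n/2}$ times a polynomial take care of the integrability near $t=0$. Integrating over the fundamental domain $\Gamma\backslash M_R$ and using $M_R \uparrow \hyp^n$, one concludes $\theta_p^{M_R}(t) \to \theta_p^M(t)$ with a dominant of the form $C t^{-n/2}$, valid uniformly in $R$, which suffices for the dominated convergence argument on $(0,1]$ after first shifting to $\Re(s)$ large and then continuing.

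\textbf{Large time.} Here one needs an estimate $\theta_p^{M_R}(t) \leq C\,e^{-c t}$ for $t\geq 1$, with $c>0$ and $C$ independent of $R$. On $\hyp^n$ one gets such a gap from the Plancherel formula on $G$ and the representation theory of $\rho$, as in M\"uller--Pfaff, since $L^2$-acyclicity rules out the bottom of the spectrum of $\Delta_p[E^\rho]$. For the compact manifolds $\Gamma\backslash M_R$ with boundary I would import the estimates of \cite{Ich2} to relate the boundary-value Laplacian on $\Gamma\backslash M_R$ to the Laplacian on the closed cusp-truncations, and then transfer the $\hyp^n$-gap via the comparison inequalities used by L\"uck--Schick (cutoff arguments against $L^2$-sections of $E^\rho$), ensuring the gap survives for all $R$ beyond some $R_0$.

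\textbf{Main obstacle.} The genuinely hard step is proving that the spectrum of $\Delta_p[E^\rho_{M_R}]$ does not accumulate at $0$ uniformly in $R$: the topology of $M_R$ becomes ever more complicated as cusps are approximated by longer and longer thin necks, so standard perturbation arguments do not apply directly. This is where L\"uck--Schick's harmonic-analytic machinery must be upgraded to incorporate the twisting by $\rho$. I expect that combining their cusp-cross-section averaging argument with the explicit Plancherel decomposition of $L^2$-forms with coefficients in $E^\rho$ (where $\rho|_K$ controls the $K$-types that can appear) yields the required uniform gap, and hence permits the exchange of limit and regularized integral needed to conclude $\lim_R \log T^{An}_{(2)}(\Gamma\backslash M_R,\rho) = \log T^{An}_{(2)}(M,\rho)$.
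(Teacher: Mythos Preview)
Your overall architecture (split at $t=1$, small-time via the principle of not feeling the boundary and homogeneity, large-time via a dominating integrable function) matches the paper. The small-time sketch is essentially right in spirit, though you should be explicit that one subtracts the full asymptotic expansion so the remainder is $\mathcal O(t^{1/2})$ and then shows the asymptotic coefficients themselves converge (your bound $Ct^{-n/2}$ is not integrable against $dt/t$, so dominated convergence cannot be applied directly at $s=0$ without this subtraction).

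The large-time part, however, contains a genuine gap. You aim for a uniform spectral gap, i.e.\ $\theta_p^{M_R}(t)\leq Ce^{-ct}$ with $c>0$ independent of $R$, and propose to transfer the $\hyp^n$ gap via cutoff and Plancherel arguments. This will not work: the boundary $\partial M_R$ is a union of flat $(n-1)$-tori, and the Laplacian on the collar near the boundary has spectrum accumulating at $0$ (the corresponding Novikov-Shubin invariants are finite, not $\infty^+$). So there is no uniform positive lower bound on the spectrum of $\Delta_p[E^\rho_{M_R}]$ as $R\to\infty$, and no exponential bound of the type you seek. The Plancherel formula is available only on the homogeneous space $\hyp^n$ and says nothing about the boundary-value problem on $M_R$.

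What the paper actually proves is the weaker but sufficient uniform \emph{polynomial} control of the spectral density: there exist $C,\beta>0$ independent of $R$ with $F(\Delta_p[E^\rho_{M_R}],\lambda)\leq C\lambda^\beta$ for small $\lambda$. This yields an $L^1$ dominating function on $[1,\infty)$ (not an exponential), which is enough for dominated convergence. The uniform Novikov-Shubin bound is obtained by passing to Sobolev complexes and using a Mayer-Vietoris-type short exact sequence $0\to D_p[E^\rho]\to D_p[E^\rho_{M_R}]\oplus D_p[E^\rho_{C_{R-1}}]\to D_p[E^\rho_{T_{R-1}}]\to 0$, together with the flat bundle isometries $E^\rho_{T_R}\cong E^\rho_{T_1}$ that make the tube contribution independent of $R$. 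This Sobolev/exact-sequence machinery, not a spectral-gap transfer, is the missing idea in your proposal.
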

Since $T^{Top}_{(2)}(\Gamma \backslash M_R,\rho) = T^{Top}_{(2)}(M,\rho)$ due to both spaces having the same (simple) homotopy type, Theorem \ref{A} would directly follow from Theorem \ref{B} if one could show that the {\itshape torsion anomaly} \begin{equation} \log \left( \frac{T_{(2)}^{An}(\Gamma \backslash M_R,\rho)}{T^{Top}_{(2)}(\Gamma \backslash M_R,\rho)} \right) \end{equation} vanishes in the limit $R \to \infty$. For $\rho = \unit$, L\"uck and Schick were able to show that this is indeed the case \cite[Section $1$]{Lueck:hyp}. However, what was lacking back at the time of their writing was any understanding of the torsion anomaly on compact manifolds with boundary for arbitrary unimodular representations. As mentioned before, the author has undergone the investigation of this general anomaly in a separate work \cite[Theorem $1.2$]{Ich2}, from which the next result can be derived:
\begin{THM}[Theorem \ref{TorQuot}]\label{C} One has \begin{equation} \log \left( \frac{T_{(2)}^{An}(\Gamma \backslash M_R,\rho)}{T^{Top}_{(2)}(\Gamma \backslash M_R,\rho)} \right) =  \dim(\rho) \cdot \log \left( \frac{T_{(2)}^{An}(\Gamma \backslash M_R,\unit)}{T^{Top}_{(2)}(\Gamma \backslash M_R,\unit)} \right). \end{equation} \end{THM}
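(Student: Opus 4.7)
The plan is to apply the author's general anomaly formula \cite[Theorem 1.2]{Ich2} to the compact Riemannian manifold with boundary $\Gamma \backslash M_R$, equipped with its induced hyperbolic metric $g_R$ and the flat bundle $(E^\rho, h_\rho)$ descending from $\hyp^n$. That theorem should provide, for any unimodular representation $\tau$ of the fundamental group of a compact manifold with boundary $(N,g_N)$, an expression of the form
\begin{equation*}
\log \left( \frac{T_{(2)}^{An}(N,\tau)}{T^{Top}_{(2)}(N,\tau)} \right) = \mathcal{A}(N,g_N,\tau,h_\tau),
\end{equation*}
where the anomaly $\mathcal{A}$ is a local quantity, i.e., an integral over $\partial N$ (and possibly $N$) of a density built from the geometry of $g_N$ near the boundary together with the Hermitian data $h_\tau$. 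My first step is therefore to verify that $\Gamma \backslash M_R$, together with the bundle associated to the restriction of $\rho$ to $\Gamma$, satisfies the hypotheses of \cite[Theorem 1.2]{Ich2} (unimodularity follows from \cite[Lemma 4.3]{Muller:Tor2}, and $L^2$-acyclicity and the determinant class condition are ensured by the work summarized in Section \ref{FLATRHO}).

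Next, I would analyze the anomaly density $\mathcal{A}$ for the two cases $\tau = \rho$ and $\tau = \unit$. The claimed identity says precisely that
\begin{equation*}
\mathcal{A}(\Gamma \backslash M_R, g_R, \rho, h_\rho) = \dim(\rho) \cdot \mathcal{A}(\Gamma \backslash M_R, g_R, \unit, h_\unit),
\end{equation*}
so the crucial input is a pointwise identity between the two integrands. This should follow from the fact that $\rho$ extends to the ambient Lie group $G = \SO_0(n,1)$ and that $h_\rho$ is the Matsushima metric \cite[Lemma 3.1]{Matsushima:Metric}. Because $h_\rho$ is $G$-invariant and $E^\rho$ is obtained by parabolic induction, the relevant fiberwise traces that appear in $\mathcal{A}$ (traces of endomorphisms built from the curvature of the metric connection and the Kamber--Tondeur form $\omega(\rho,h_\rho)$) reduce, after a symmetric-space calculation, to $\dim(\rho)$ times the corresponding scalar computed for the trivial bundle with its trivial metric. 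Concretely, I would write the integrand as a trace over the fiber $V_\rho$ of a universal geometric endomorphism depending only on $g_R$ near $\partial(\Gamma \backslash M_R)$, and show the $\rho$-dependence reduces to a factor of $\tr_{V_\rho}(\mathrm{id}) = \dim(\rho)$.

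The main obstacle, I expect, is precisely this last reduction: verifying that the \emph{Kamber--Tondeur (or Cheeger--Chern--Simons) type} contributions in $\mathcal{A}$ that distinguish non-unitary unimodular representations from unitary ones vanish identically on the cuspidal ends of $\Gamma \backslash M_R$. In \cite{Ich2} these terms are generally nonzero; here their vanishing must come from unimodularity of $\rho$ combined with the $G$-equivariance of $h_\rho$, which forces the Hermitian form to be parallel along the $K$-orbits and hence to have trivial logarithmic derivative in the directions tangent to the cross-sectional tori of the cusps. Once this geometric vanishing is established, the only surviving piece of $\mathcal{A}$ is the one tensored against $\tr_{V_\rho}(\mathrm{id})$, and the identity of the theorem follows by dividing by the trivial-representation anomaly. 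A secondary concern is to confirm that the anomaly formula of \cite[Theorem 1.2]{Ich2} is applicable \emph{uniformly in $R$}, which is needed later when combining Theorem \ref{C} with Theorem \ref{B} in the proof of Theorem \ref{A}; this should be straightforward since $\partial(\Gamma \backslash M_R)$ is isometric, up to scaling, to a fixed cross-section of the cusps of $M$.
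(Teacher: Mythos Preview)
Your approach is more elaborate than necessary and rests on a misreading of what \cite[Theorem 1.2]{Ich2} actually says. As restated in this paper as Theorem \ref{CheMulComp}, that result is \emph{already} a proportionality statement between the anomalies of two arbitrary unimodular representations $\rho_1,\rho_2$ on a compact manifold with boundary:
\[
\dim(\rho_2)\cdot \log\!\left(\frac{T_{(2)}^{An}(\Gamma\backslash M,\rho_1)}{T^{Top}_{(2)}(\Gamma\backslash M,\rho_1)}\right)=\dim(\rho_1)\cdot \log\!\left(\frac{T_{(2)}^{An}(\Gamma\backslash M,\rho_2)}{T^{Top}_{(2)}(\Gamma\backslash M,\rho_2)}\right).
\]
The paper's proof of Theorem \ref{TorQuot} therefore consists only of specializing this to $\rho_1=\rho$, $\rho_2=\unit$ (so $\dim(\rho_2)=1$) after verifying the hypotheses: the determinant class and $L^2$-acyclicity conditions for $\Omega_{(2)}^*(M_R,E^\rho_{M_R})$ and $\Omega_{(2)}^*(\partial M_R,E^\rho_{\partial M_R})$ come from Theorem \ref{detconvclass}, and unimodularity of the metric $h_\rho$ (in the sense of \cite[Definition 4.2]{Ich2}) is checked in \cite[Lemma 5.5.1]{Ich}. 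No pointwise analysis of anomaly densities, Kamber--Tondeur forms, or symmetric-space reductions is needed.

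Your proposed route---extracting a single-representation anomaly density $\mathcal A$ and then arguing by hand that the non-unitary contributions vanish on the cuspidal tori---would amount to re-deriving the general content of \cite{Ich2} in this special case. More importantly, the mechanism you identify as the ``main obstacle'' (that $G$-equivariance of $h_\rho$ forces the Kamber--Tondeur form to vanish tangentially to the cusp cross-sections) is not the reason the proportionality holds: Theorem \ref{CheMulComp} applies to \emph{any} compact $(M,g)$ with boundary and \emph{any} pair of unimodular Hermitian bundles satisfying the stated conditions, with no homogeneity or symmetric-space hypothesis whatsoever. So the geometric vanishing you are worried about is a red herring, and the speculative argument you sketch for it is neither needed nor obviously correct.
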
 Coincidentally, M\"uller and Rochon recently studied the non-$L^2$ version of Theorem \ref{A} for representations $\rho$ of $G$ that are not fixed by the standard Cartan involution on $G$ corresponding to $K < G$ \cite{Muller:Hyp}. In contrast to the $L^2$-case, they showed that the two resulting torsion elements differ by a term related to the cusps of $\Gamma \backslash \hyp^n = M$ which contribute non-trivially to the twisted cohomology $H^*(M,\Gamma \backslash E^\rho)$ \cite[Theorem $1.1$]{Muller:Hyp}. 
In fact, the situation they face (and, consequently, the method they employ) is fundamentally different from ours, the main difference being that the bundle $E^\rho \downarrow \hyp^n$ central to our investigation is homogeneous (cf.\ Definition \ref{HOMDEF}), unlike the quotient bundle $\Gamma \backslash E^\rho \downarrow \Gamma \backslash \hyp^n$ they consider. \\
Another major inspiration for the development of this work is the influential paper of Bergeron and Venkatesh \cite{Bergeron:Growth}, which has sparked and renewed the general interest in gaining a better understanding of $L^2$-torsion. Providing a computational approach for the analytic $L^2$-torsion of arbitrary non-positively curved, locally symmetric spaces $(M,g)$ and representations $\rho$ of the ambient Lie group \cite[Proposition $5.2$]{Bergeron:Growth}, they also formulated a wide-reaching conjecture with ramifications into the field of arithmetic geometry \cite[Conjecture $1.3$]{Bergeron:Growth}. Namely, they proposed that, whenever the fundamental group $\Gamma$ is a {\itshape congruence} lattice, $T^{An}_{(2)}(M,\rho)$ should be a predictor for the growth rate of the torsion part of certain integral homology groups that are naturally derived from a residually finite sequence $\dots < \Gamma_3 < \Gamma_2 < \Gamma$ of congruence subgroups and the representation $\rho$. This growth rate conjecture is not only of great interest \cite{Ash:Tor}\cite{Raim:Tor}\cite{Thang:Tor} to topologists and geometers - elements inside the aforementioned torsion homology groups that also survive in some $\bmod  \: p$ homology are of great significance in number theory, see \cite{Scholze:Tor} for more details. For a comprehensive introduction to the Bergeron-Venkatesh conjecture, we refer to \cite[Section 6.6]{Kammeyer:L2}. \\
At first glance, it might seem peculiar why the conjecture includes $T^{An}_{(2)}(M,\rho)$ instead of $T^{Top}_{(2)}(M,\rho)$, since the construction of the twisted homology groups is inherently topological in nature. However, $T^{An}_{(2)}(M,\rho)$ is much simpler to define in the locally symmetric setting. Namely, if $M$ is locally symmetric, $T^{An}_{(2)}(M,\rho)$ is always constructed from a homogeneous bundle $E^\rho \downarrow \widetilde{M}$ over the globally symmetric universal cover $\widetilde{M}$, which depends only on the global representation $\rho$, but not on $M$ itself. In the case that the {\itshape fundamental rank} of the ambient Lie group $G$ is one, this further leads to the computational formulas of $T^{An}_{(2)}(M,\rho)$ which take only the volume of $M$ into account \cite[Proposition $5.2$]{Bergeron:Growth}. Conversely, if $M$ is not compact, it is as of now still unclear if $T^{Top}_{(2)}(M,\rho)$ can be defined in general, the problem being that the $L^2$-acyclicity and {\itshape determinant class} conditions necessary for its construction can {\itshape a priori} not be deduced from the analytic bundle, where they are known to be always satisfied. This is because, at the time of writing this paper, any de Rham-type theorems that effectively allow comparing analytic and topological $L^2$-invariants (just like Theorem \ref{DeRh}) are only avaible for compact manifolds. Without these tools at hand, the $L^2$-acyclicity and determinant class conditions can only be verified from a particular good choice of finite CW-model $X$ of $M$, or/and from a better general understanding of the Novikov-Shubin invariants of matrices over the {\itshape complex} group ring $\mathbb C[\Gamma]$. \\ 
In this vein, the result of our paper raises the question whether $T^{Top}_{(2)}(M,\rho)$ can also be defined for arbitrary non-positively curved, locally symmetric spaces, and whether it always equals $T^{An}_{(2)}(M,\rho)$. However, while Theorem \ref{C} can potentially still be adapted, the proof of Theorem \ref{B} doesn't admit a straightforward generalization, since it relies on the well understood (and specific) end structure of hyperbolic manifolds. For example, we will make use of the fact that each quotient $\Gamma \backslash M_R$ only has toroidal boundary components. \\
In Section \ref{Prelim}, we are going to review the basic notions of Hilbert $\vnN(\Gamma)$-modules and cochain complexes, used then to define analytic and topological $L^2$-Betti numbers, Novikov-Shubin invariants and $L^2$-torsion of manifolds and representations of their fundamental group. \\
Section \ref{RelL2} is comprised of previously known results which will be used in subsequent sections, namely the induction principle, the $L^2$ de Rham theorem, as well as the $L^2$-Cheeger M\"uller type theorem for compact manifolds with boundary.  \\
In Section \ref{FLATRHO}, we introduce the homogeneous bundle $E^\rho \downarrow \hyp^n$ associated with a representation $\rho$ of $G \coloneqq SO_0(n,1)$, show that both $T^{An}_{(2)}(\Gamma \backslash M,\rho)$ and $T^{Top}_{(2)}(\Gamma \backslash M,\rho)$ are well-defined for any lattice $\Gamma < G$, and explain how Theorem \ref{A} follows from the limit formula of Theorem \ref{B} and Theorem \ref{C}. Finally, the last two sections are entirely devoted to the proof of Theorem \ref{B}. \\
The paper is based on parts of the author's dissertation. I thank my PhD advisor, Roman Sauer, for his constant support and numerous fruitful discussions. Furthermore, I am indebted to my colleague, Holger Kammeyer, for his many helpful ideas and comments. \\This work is generously funded by the DFG (Deutsche Forschungsgemeinschaft) -- 28186985.

\section{Preliminaries} \label{Prelim}
We start by giving some basic introduction to the general theory of Hilbert $\vnN(\Gamma)$-modules, with the aid of which we further define the relevant analytic and topological $L^2$-invariants. The reader well-versed in the construction of $L^2$-Betti numbers and $L^2$-torsion, both topological and analytical, may skip most parts of this section, all expect for Subsection \ref{1.3}, which contains the statement of a central result used throughout the main proofs of this paper. 

\subsection{Hilbert $\vnN(\Gamma)$-modules}\label{1.1}
Throughout this section, we fix a countable group $\Gamma$. We define $L^2(\Gamma)$ to be the complex Hilbert space generated by the set $\Gamma$. Note that left multiplication by group elements naturally induces on $L^2(\Gamma)$ a linear isometric left action by $\Gamma$. Also, observe that for any complex Hilbert space $H$, this action by $\Gamma$ on $L^2(\Gamma)$ extends naturally to a linear isometric action on the tensor product of Hilbert spaces $L^2(\Gamma)  \otimes H$. A complex Hilbert space $\vnH$, equipped with a linear isometric left $\Gamma$-action and which admits a linear, isometric and $\Gamma$-equivariant embedding $\iota \colon \vnH \hookrightarrow L^2(\Gamma)  \otimes H$ for some separable Hilbert space $H$ is called a {\itshape Hilbert $\vnN(\Gamma)$-module}. $\vnH$ is said to be {\itshape finitely generated} if one can choose $H$ to satisfy $\dim_{\ceals}(H) < \infty$ in the above embedding. \\
Given two Hilbert $\vnN(\Gamma)$-modules $\vnH$ and $\vnH'$, an unbounded (i.e.\ not necessarily bounded), $\Gamma$-equivariant, closed and densely defined operator $f \colon \vnH \to \vnH'$ is called a {\itshape morphism of Hilbert $\vnN(\Gamma)$-modules}. Until the end of this section, due to lack of ambiguity, we will simply refer to any such map as a {\itshape morphism}. \\
The subclass of {\itshape bounded} morphisms is closed under taking arbitrary compositions. In general,
if $f \colon \vnH \to \vnH'$ is a morphism, then so is its orthogonal complement $f^\perp \colon \ker(f)^\perp \to \overline {\im(f)}$, its adjoint $f^* \colon \vnH \to \vnH'$, as well as the self-adjoint composition $f^*f \colon \vnH \to \vnH$ \cite[Theorem $3.24$]{Kato:Func}. Moreover, for any bounded Borel function $\phi \colon \reals \to \ceals$, the operator $\phi(f^*f) \colon \vnH \to \vnH$ constructed via the spectral theorem will be a bounded endomorphism. Finally, if  $f = u |f| \colon \vnH \to \vnH'$ is the (left) polar decomposition of a morphism $f$, both the partial isometry $u \colon \ker(f)^\perp \to \vnH'$, as well as absolute value $|f| \colon \vnH \to \vnH$ are again morphisms. \\ Any {\itshape positive}, bounded endomorphism $f \colon \vnH \to \vnH$ admits the so-called {\itshape von Neumann trace} \begin{equation} \vntr(f) \in [0,\infty], \end{equation} see \cite[Section 1.1]{Lueck:Book} for a comprehensive definition and introduction. A bounded morphism $f \colon \vnH \to \vnH'$ is said to be {\itshape of $\Gamma$-trace class} if $\vntr(|f|) < \infty$. Just as in the trivial case $\Gamma = \{0\}$ (in which case Hilbert $\vnN(\Gamma)$-modules are simply Hilbert spaces and morphisms between them are simply unbounded linear operators), one checks that inside the algebra of bounded endomorphisms over a fixed Hilbert $\vnN(\Gamma)$-module, the trace class operators form a two-sided ideal. A Hilbert $\vnN(\Gamma)$-module is said to be {\itshape finite-dimensional} if the identity morphism $\unit_{\vnH} \colon \vnH \to \vnH $ is of trace class. In this case, we define the {\itshape von Neumann dimension} of $\vnH$ as\begin{equation} \vndim(\vnH) \coloneqq \vntr(\unit_{\vnH}) \in \reals_{\geq 0}. \end{equation} Any finitely-generated Hilbert $\vnN(\Gamma)$-module is also finite-dimensional, but not necessarily vice versa. \\ For $\lambda \geq 0$, let $\chi_{[0,\lambda]} \colon \reals \to \{0,1\}$ be the characteristic function of the subset $[0,\lambda] \subset \reals$, and, for a morphism $f \colon \vnH \to \vnH'$, let $\chi_{[0,\lambda]}(|f|) \colon \vnH \to \vnH$ be the corresponding positive, bounded endomorphism obtained via applying the spectral theorem to the self-adjoint absolute value $|f|$. Define the {\itshape spectral density function} of $f$ as \begin{equation} F(f,\lambda) \coloneqq \vntr( \chi_{[0,\lambda]}(|f|) ) \in [0,\infty]. \end{equation} A morphism $f$ is {\itshape Fredholm} if for each $\lambda \geq 0$, the operator $\chi_{[0,\lambda]}(|f|)$ is of $\Gamma$-trace class, i.e.\ if $F(f,\lambda) < \infty$ for each $\lambda \geq 0$. Notice that morphisms between finite-dimensional Hilbert $\vnN(\Gamma)$-modules are automatically Fredholm.\\
$f$ is said to be {\itshape $\zeta$-regular} if \begin{enumerate}[label=(\alph*)] \item for each $t > 0$, the operator $e^{-t|f|^\perp}$ is of $\Gamma$-trace class, and \item there exists $C > 0$, such that the formal expression \begin{equation} \zeta_{f}(s) \coloneqq \Gamma(s)^{-1}\int_{0}^{1} t^{s-1} \vntr(e^{-t|f|^\perp}) dt \end{equation} determines a holomorphic function on the half-plane $\{s \in \ceals \colon \Re(s) > C \}$ that extends to a meromorphic function on all of $\ceals$ which is regular at $s = 0$. 
\end{enumerate}
Observe that $\chi_{[0,\lambda]}(|f|^\perp) = \chi_{[0,\lambda]}(|f|^\perp) \cdot e^{|f|^\perp} \cdot \chi_{[0,\lambda]}(|f|^\perp) \cdot e^{-|f|^\perp}$ for each $\lambda > 0$. Together with the fact that $\Gamma$-trace class morphisms form an ideal inside the algebra of bounded endomorphisms over a Hilbert $\vnN(\Gamma)$-module, it follows that a $\zeta$-regular morphism with finite-dimensional kernel must be Fredholm as well. \\
A Fredholm morphism is said to be {\itshape of determinant class} if the integral $\int_{0^+}^1 \ln(\lambda) dF(f,\lambda)$ converges. Here $dF(f,\lambda)$ denotes the Borel measure on $\reals_{\geq 0}$ induced by the positive, right-continuous function $F(f,\lambda)$. A $\zeta$-regular morphism $f$ is of determinant class if and only if the integral $\int_{1}^\infty t^{-1} \vntr(e^{-t|f|^\perp})$ converges \cite[Lemma $3.139$]{Lueck:Book}. As such, we may define for any such $f$ the {\itshape $\zeta$-regularized determinant}
\begin{equation} \zvndet(f) \coloneqq \exp \left( - \frac{d}{ds} \zeta_{f}(s)|_{s=0} - \int_{1}^\infty t^{-1} \vntr(e^{-t|f|^\perp}) dt \right) \in \reals_{>0}. \end{equation}
As we shall see soon, $\zvndet$ can be defined for many different unbounded endomorphisms over infinitely-generated Hilbert $\vnN(\Gamma)$-modules. Moreover, in some instances, $\zvndet$ can be regarded as an extension of the {\itshape Fuglede-Kadision determinant} $\vndet$ \cite[Definition $3.11$]{Lueck:Book}, which we won't discuss here in any more detail and which is defined only for bounded endomorphisms over finitely-generated Hilbert $\vnN(\Gamma)$-modules. Indeed, these two notions of determinant coincide over the class of bounded endomorphisms of finitely-generated Hilbert $\vnN(\Gamma)$-modules with a spectral gap at $0$ \cite[Proposition $4.1.25$]{Ich}. \\ Finally, as a quantification for the spectral behavior of a morphism $f$ near $0$ with the property that $f^\perp$ is Fredholm, the {\itshape Novikov-Shubin Invariant} $\alpha(f) \in [0,\infty] \cup \{\infty^+\}$ of $f$ is defined as 
\begin{equation} \alpha(f) \coloneqq \begin{cases} \liminf_{\lambda \to 0^+} \cfrac {\ln(F(f^\perp,\lambda))}{\ln(\lambda)} \in [0,\infty],& \text{if} \; F(f^\perp,\lambda) > 0 \;\; \forall \lambda > 0,\\ \infty^+, & \text{else}. \end{cases} \end{equation} Observe that $\alpha(f)$ equals the (purely formal) symbol $\infty^+$ if and only if $f$ has a spectral gap at $0$. Moreover, $\alpha(f) > 0$ implies that $f$ is of determinant class (the converse need not hold). \\
A diagram \begin{equation} (C_*,c_*) \colon 0 \xrightarrow{0} C_0 \xrightarrow{c_0} C_1 \xrightarrow{c_1} C_2 \xrightarrow{c_2} \dots, \end{equation} where each $C_n$ is a Hilbert $\vnN(\Gamma)$-module with $C_n = 0$ for all but finitely many $n \in \mathbb N_0$ and each $c_n$ a morphism satisfying $\overline{\im(c_n)} \subseteq \ker(c_{n+1})$ is called a {\itshape Hilbert $\vnN(\Gamma)$-cochain complex}. For the remainder of this section, we will simply refer to these type of complexes as {\itshape cochain complexes}. Such a complex is said to be {\itshape of finite type} if each $C_n$ is finitely-generated and each $c_n$ is bounded. $C_*$ is said to be {\itshape Fredholm at $n \in \mathbb N_0$} if the restricted boundary operator $c_n|_{\im(c_{n-1})^\perp}$ is Fredholm, and {\itshape Fredholm} if it is Fredholm at each $n \in \mathbb N_0$. In case that $C_*$ is Fredholm, we can define for each $n \in \mathbb N_0$ its $n$-th {\itshape spectral density function} \begin{align} F_n(C_*,  \_\_ ) \colon \reals_{\geq 0} \to \reals_{\geq 0}, \hspace{.5cm} F_n(C_*,\lambda) \coloneqq F_n(c_n|_{\im(c_{n-1})^\perp},\lambda), \end{align} its $n$-th {\itshape $L^2$-Betti Number} \begin{align} b_n^{(2)}(C_*) \coloneqq F_n(C_*,0) \in \reals_{\geq 0}, \end{align} as well as its $n$-th {\itshape Novikov-Shubin invariant} \begin{equation} \alpha_n(C_*) \coloneqq \alpha(c_n|_{\im(c_{n-1})^\perp}) \in [0,\infty] \cup \{\infty^+\}. \end{equation} A Fredholm complex is said to be {\itshape acyclic} if $b_n^{(2)}(C_*) = 0$ for each $n \in \mathbb N_0$. \\ 
If $(C_*,c_*)$ and $(D_*,d_*)$ are two cochain complexes, a sequence $f_* \colon C_* \to D_* \coloneqq (f_n \colon C_n \to D_n)_{n \in \mathbb N_0} $ of {\itshape bounded} morphisms that satisfies both $f_n(\dom(c_n)) \subseteq \dom(d_n)$ and $d_n \circ f_n = f_{n+1} \circ c_n$ for all $n \in \mathbb N_0$ is called a {\itshape morphism of (Hilbert $\vnN(\Gamma)$-)cochain complexes}.  Such $f_*$ is called an {\itshape isomorphism} if each $f_n \colon C_n \to D_n$ is an isomorphism of Hilbert $\vnN(\Gamma)$-modules. \\
Two morphisms $f_* \colon C_* \to D_*$ and $g_* \colon C_* \to D_*$ of cochain complexes are {\itshape chain homotopic} if there exists a chain homotopy $K_* \colon C_* \to D_{*-1}$ between them, i.e.\ a sequence of bounded morphisms $K_n \colon C_n \to D_{n-1}$ for each $n \in \mathbb N$ satisfying $K_{n+1} \circ c_n + d_{n-1} \circ K_n = f_n - g_n$. Two complexes $C_*$ and $D_*$ are {\itshape chain homotopy equivalent} if there exists morphisms $f_* \colon C_* \to D_*$ and $g_* \colon D_* \to C_*$, such that $g_* \circ f_*$ is chain homotopic to the identity morphism $\unit_{C_*}$ and $f_* \circ g_*$ is chain homotopic to $\unit_{D_*}$.
\begin{theorem}\cite[Proposition $4.1$]{Gromov:Novi}\label{DILEQ} Let $C_*$ and $D_*$ be two $\zeta$-regular Fredholm complexes that are chain homotopy equivalent. Then the spectral density functions are dilatationally equivalent near $0$, that is, there exist constants $c,\epsilon > 0$  so that we have for each $n \in \mathbb N_0$ and each $\lambda \leq \epsilon$
\begin{equation} F_n(C_*,c^{-1} \cdot \lambda) \leq F_n(D_*,\lambda)  \leq F_n(C_*,c \cdot \lambda). \end{equation}
Consequently, we get \begin{align} & b_n^{(2)}(C_*) = b_n^{(2)}(D_*), \\ & \alpha_n(C_*) = \alpha_n(D_*) \end{align} for each $n \in \mathbb N_0$. In particular, $C_*$ is acyclic if and only if $D_*$ is acyclic. \end{theorem}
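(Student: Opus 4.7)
The strategy is the Gromov--Shubin dilatational-equivalence argument: a bounded chain map, corrected via the chain homotopy, induces an approximate embedding between small-spectral subspaces of the two complexes, and the von Neumann dimension monotonicity converts this into the claimed sandwich inequality.

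To set up, I would fix bounded chain maps $f_* \colon C_* \to D_*$ and $g_* \colon D_* \to C_*$ together with bounded chain homotopies $K_* \colon C_* \to D_{*-1}$ and $L_* \colon D_* \to C_{*-1}$ witnessing $g_* f_* - \unit_{C_*} = c_{*-1} K_* + K_{*+1} c_*$ and $f_* g_* - \unit_{D_*} = d_{*-1} L_* + L_{*+1} d_*$. Let $M \geq 1$ be a uniform bound for the operator norms of $f_n, g_n, K_n, L_n, c_n, d_n$ across the (finitely many nonzero) degrees. It suffices to prove that there exist $C = C(M) > 0$ and $\epsilon > 0$ with $F_n(D_*, \lambda) \leq F_n(C_*, C\lambda)$ for all $\lambda \leq \epsilon$; the reverse inequality then follows by swapping the roles of $C_*$ and $D_*$.

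The heart of the argument uses the restricted operators $c_n^\perp \coloneqq c_n|_{\overline{\im(c_{n-1})}^\perp}$ and $d_n^\perp$, which are Fredholm by assumption, so that $F_n(C_*, \lambda) = \vntr(\chi_{[0,\lambda]}(|c_n^\perp|))$. Let $E_\lambda^C$ and $E_\lambda^D$ denote the corresponding $\lambda$-spectral subspaces of $|c_n^\perp|$ and $|d_n^\perp|$. Given $y \in E_\lambda^D$, set $x \coloneqq g_n y \in C_n$ and $\widetilde{x} \coloneqq \pi_n x$, where $\pi_n$ is orthogonal projection onto $\overline{\im(c_{n-1})}^\perp$. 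Using $c_n g_n = g_{n+1} d_n$ and $\|d_n y\| \leq \lambda \|y\|$ one computes $\|c_n^\perp \widetilde{x}\| = \|g_{n+1} d_n y\| \leq M \lambda \|y\|$, so $\widetilde{x}$ lands in the spectral subspace of $|c_n^\perp|$ for eigenvalues at most $M\lambda / \|\widetilde{x}\|$ (after normalization). Meanwhile, the chain homotopy identity $f_n g_n - \unit_{D_n} = d_{n-1} L_n + L_{n+1} d_n$, together with $y \in \overline{\im(d_{n-1})}^\perp$, gives after the dual projection on the $D$-side a near-isometry $\|\pi_n^D f_n \widetilde{x} - y\| \leq M^2 \lambda \|y\|$. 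Hence, for $\lambda$ small enough, the assignment $y \mapsto \widetilde{x}$ is a bounded $\Gamma$-equivariant injection of $E_\lambda^D$ into $E_{C\lambda}^C$ with $C = 2M$ (say).

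The final step applies the standard von Neumann dimension monotonicity: a bounded $\Gamma$-equivariant injection from one finitely-dimensional Hilbert $\vnN(\Gamma)$-module into another bounds the dimensions, so $\vndim(E_\lambda^D) \leq \vndim(E_{C\lambda}^C)$, i.e.\ $F_n(D_*, \lambda) \leq F_n(C_*, C\lambda)$. Taking $\lambda \to 0$ yields $b_n^{(2)}(C_*) = b_n^{(2)}(D_*)$, and the definition of $\alpha_n$ as a $\liminf_{\lambda \to 0^+}$ of $\ln(F_n)/\ln(\lambda)$ is manifestly insensitive to multiplication of $\lambda$ by the constant $C$, giving equality of the Novikov--Shubin invariants. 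Acyclicity is then automatic since it is equivalent to the vanishing of all $L^2$-Betti numbers. The main obstacle lies in the careful bookkeeping around the Hodge-type decomposition $C_n = \overline{\im(c_{n-1})} \oplus \overline{\im(c_{n-1})}^\perp$: the chain maps $f_n, g_n$ need not respect this decomposition, and one must verify that the projections $\pi_n, \pi_n^D$ introduce errors dominated in norm by $\lambda$ via the chain homotopy relations, which is exactly the content ensured by the Fredholm hypothesis.
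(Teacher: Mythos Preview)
The paper does not supply its own proof of this theorem; it simply cites \cite[Proposition~4.1]{Gromov:Novi} and uses the result as a black box. Your sketch is essentially the Gromov--Shubin argument that the citation points to, so in spirit you are reproducing the intended proof.

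There is, however, a genuine gap in your write-up. You assert that the map $y \mapsto \widetilde{x} = \pi_n g_n y$ is an injection of $E_\lambda^D$ \emph{into the spectral subspace} $E_{C\lambda}^C$. This is false: the inequality $\|c_n^\perp \widetilde{x}\| \leq C\lambda \|\widetilde{x}\|$ that you (correctly) derive does \emph{not} force $\widetilde{x}$ to lie in $\im\bigl(\chi_{[0,C\lambda]}(|c_n^\perp|)\bigr)$. A vector satisfying such a norm bound can have nonzero components in arbitrarily high spectral subspaces. Consequently, your ``monotonicity of von Neumann dimension under injection'' step, while true as an abstract principle, is being applied to a map whose target is not what you claim.

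The standard fix --- and the one actually used in the literature --- is the variational characterization
\[
F(f,\lambda) = \sup\bigl\{ \vntr(p_L) : L \subseteq \vnH \text{ closed, $\Gamma$-invariant, } \|f v\| \leq \lambda \|v\| \text{ for all } v \in L \bigr\},
\]
which the paper itself quotes later (see the proof of Proposition~\ref{estimate3}, citing \cite[Lemma~2.3]{Lueck:Book}). With this in hand, the closure of the image of $E_\lambda^D$ under $\pi_n g_n$ is a $\Gamma$-invariant subspace $L$ on which $\|c_n^\perp v\| \leq C\lambda \|v\|$, so $\vndim(L) \leq F_n(C_*, C\lambda)$; injectivity gives $\vndim(L) = \vndim(E_\lambda^D) = F_n(D_*,\lambda)$, and the inequality follows. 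Once you replace the incorrect ``lands in the spectral subspace'' claim by this variational step, the rest of your outline is sound.
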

A diagram of Hilbert $\vnN(\Gamma)$-cochain complexes \begin{equation} 0 \rightarrow C_* \xrightarrow{f_*} D_* \xrightarrow{g_*} E_* \rightarrow 0 \end{equation} is called a {\itshape short exact sequence} if $f_n: C_n \to D_n$ is injective, $g_n: D_n \to E_n$ is surjective and $\ker(g_n) = \im(f_n)$ for each $n \in \mathbb N_0$. A fundamental result that relates the spectral density functions of cochain complexes in a given short exact sequence is the following:
\begin{proposition}\cite[Theorem 4.11]{Lueck:hyp}\label{exactleq} Let  $(C_*,c_*)$, $(D_*,d_*)$ and $(E_*,e_*)$ be three  Hilbert $\vnN(\Gamma)$-cochain complexes, and \begin{equation} 0 \rightarrow C_* \xrightarrow{f_*} D_* \xrightarrow{g_*} E_* \rightarrow 0 \end{equation} a short exact sequence of morphisms between them. 
Suppose that $C_*$ and $E_*$ are both Fredholm at $n \in \mathbb N_0$ and that either $b_n(E_*) = 0$ or $b_{n+1}(C_*) = 0$. Then $D_*$ is also Fredholm at $n$ and we have the estimates 
\begin{equation} \hat{F}_n(D_*,\lambda) \leq \hat{F}_n(C_*,\alpha_C \cdot \lambda^{1/2}) + \hat{F}_n(E_*, \alpha_E \cdot \lambda^{1/2}) \; \; \text{for} \; \; 0 \leq \lambda < \alpha_1, \end{equation} where
\begin{align} &\alpha_C \coloneqq ||f_{n+1}^{-1}||^{1/2} \cdot ||f_n||,\\& \alpha_E \coloneqq (4 + 2||d_n||)||g_{n+1}||\cdot||g_n^{-1}||,
\\ &\alpha_1 \coloneqq  (4 + 2||d_n||)^{-1/2} . \end{align}
\end{proposition}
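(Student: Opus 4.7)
The plan is to establish Fredholmness of $D_*$ at degree $n$ together with the spectral estimate by transferring information through the short exact sequence: every vector in the low-spectrum subspace of $d_n^{\perp} := d_n|_{\overline{\im(d_{n-1})}^\perp}$ will be decomposed into two pieces that, up to a controlled error, lie in low-spectrum subspaces of $c_n^\perp$ and $e_n^\perp$, respectively. Monotonicity and additivity of the von Neumann trace then convert this ``injection of subspaces'' into the stated inequality of spectral density functions.

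Concretely, I would first extract bounded $\Gamma$-equivariant pseudoinverses from the exactness assumptions: since $f_n$ has closed image equal to $\ker(g_n)$, there is a bounded left inverse $f_n^{-1}\colon \ker(g_n) \to C_n$, and since $g_n$ is surjective, its restriction to $\ker(g_n)^\perp$ admits a bounded inverse $g_n^{-1}\colon E_n \to \ker(g_n)^\perp$; their operator norms are exactly those appearing in $\alpha_C$ and $\alpha_E$. For any $v$ in the spectral subspace $L(d_n^\perp,\lambda) := \im(\chi_{[0,\lambda]}(|d_n^\perp|))$, which satisfies $\|d_n v\| \leq \lambda \|v\|$, I would set $v_E := g_n(v) \in E_n$ and $v_C := f_n^{-1}\bigl(v - g_n^{-1}(v_E)\bigr) \in C_n$. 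Commutativity of $f_*$ and $g_*$ with the differentials then directly yields $e_n(v_E) = g_{n+1}(d_n v)$ and $f_{n+1}(c_n v_C) = d_n\bigl(v - g_n^{-1}(v_E)\bigr)$, producing norm bounds $\|e_n v_E\| \leq \|g_{n+1}\|\,\lambda\,\|v\|$ and $\|c_n v_C\| \leq \|f_n^{-1}\|(1 + \|d_n\|\,\|g_n^{-1}\|)\,\lambda\,\|v\|$, both linear in $\lambda$.

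The characteristic $\lambda^{1/2}$ dilation in the final bound arises from the standard spectral lemma: for a positive self-adjoint operator $T$ and $w$ with $\|Tw\| \leq \mu \|w\|$, the orthogonal projection of $w$ onto the spectral subspace $L(T,\mu^{1/2})$ differs from $w$ by at most $\mu^{1/2}\|w\|$. Applying this to $c_n^\perp$ and $e_n^\perp$, the pair $(v_C,v_E)$ is mapped, modulo error of order $\lambda^{1/2}\|v\|$, into $L(c_n^\perp,\alpha_C\lambda^{1/2}) \oplus L(e_n^\perp,\alpha_E\lambda^{1/2})$. The assignment $v \mapsto (v_C,v_E)$ is approximately injective, since $v$ can be recovered as $v = f_n(v_C) + g_n^{-1}(v_E)$, so that the Hilbert subspace it generates in $C_n \oplus E_n$ carries at least the von Neumann dimension of $L(d_n^\perp,\lambda)$ provided $\lambda < \alpha_1$ (this being the quantitative threshold below which the spectral projection errors do not overwhelm the main contribution). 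Applying $\vntr$ then yields the desired inequality on spectral density functions, and Fredholmness of $D_*$ at $n$ drops out automatically from the resulting finite values.

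The main obstacle will be verifying that $(v_C,v_E)$ actually project into the \emph{restricted} spectral subspaces of $c_n^\perp$ and $e_n^\perp$ rather than into those of the unrestricted $c_n$ and $e_n$: \emph{a priori}, $v_C$ and $v_E$ can have nontrivial components in $\overline{\im(c_{n-1})}$ and $\overline{\im(e_{n-1})}$, which contribute trivially to $\hat F$ only if the zero spectrum of the adjacent complex is already under control. This is precisely where the hypothesis $b_n^{(2)}(E_*) = 0$ or $b_{n+1}^{(2)}(C_*) = 0$ must enter: it forces one of the neighboring harmonic spaces to vanish in $L^2$-dimension, eliminating the potential obstruction from ``zero modes'' when threading the decomposition through the perpendicular subspaces. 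Careful tracking of operator norms through these projections will produce the precise constants $\alpha_C$, $\alpha_E$, and the upper bound $\alpha_1$ on the admissible $\lambda$-range as stated.
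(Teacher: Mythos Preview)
The paper does not supply its own proof of this proposition; it is quoted from \cite[Theorem~4.11]{Lueck:hyp} and used as a black box. Your outline follows the same broad strategy as that reference --- transport low-spectrum test subspaces for $d_n^\perp$ through the short exact sequence and appeal to the variational description of the spectral density function --- so at the architectural level there is nothing to compare.

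The specific estimates you write down, however, do not hold as stated. You assert that $\|c_n v_C\|\le \|f_n^{-1}\|(1+\|d_n\|\,\|g_n^{-1}\|)\,\lambda\,\|v\|$, linear in $\lambda$; but with your choice $v_C=f_n^{-1}\bigl(v-g_n^{-1}g_n v\bigr)$ one has $f_{n+1}(c_n v_C)=d_n v - d_n g_n^{-1}g_n v$, and the second summand carries no factor of $\lambda$: it is controlled only by $\|d_n\|\cdot\|g_n^{-1}g_n v\|$. Your spectral-projection lemma therefore cannot be fed a $\lambda$-linear input on the $C$-side, and the advertised $\lambda^{1/2}$ does not emerge the way you describe. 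A related symptom that the decomposition is off: the constant $\alpha_C=\|f_{n+1}^{-1}\|^{1/2}\cdot\|f_n\|$ in the statement involves $\|f_n\|$ (not $\|f_n^{-1}\|$) and the \emph{square root} of $\|f_{n+1}^{-1}\|$, whereas your route would naturally produce $\|f_n^{-1}\|$ and $\|f_{n+1}^{-1}\|$ to the first power. In the argument of \cite{Lueck:hyp} the $\lambda^{1/2}$ and the acyclicity hypothesis enter through a more careful splitting of the test subspace itself, not through a post-hoc spectral projection of a single linear map $v\mapsto(v_C,v_E)$.
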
 Define the $n$-th {\itshape Laplacian} of $(C_*,c_*)$ as \begin{equation}\label{LAPDEF} \Delta_n =\Delta_n[C_*] \coloneqq c_n^*c_n + c_{n-1}c_{n-1}^* \colon C_n \to C_n. \end{equation} Each $\Delta_n$ is a self-adjoint morphism. More precisely, each $C_n$ admits an orthogonal decomposition into closed, $\Gamma$-invariant subspaces $C_n = \ker(\Delta_n) \oplus \ker(c_n)^\perp \oplus \ker(c_{n-1}^*)^\perp$, with respect to which $\Delta_n$ decomposes as the direct sum of self-adjoint endomorphisms $\Delta_n = 0 \oplus (c_n^*)^\perp c_n^\perp \oplus c_{n-1}^\perp (c_{n-1}^*)^\perp$. Using this decomposition, it is also straightforward to check that a complex $(C_*,c_*)$ is Fredholm if and only if each Laplacian $\Delta_n[C_*]$ is Fredholm. A complex $(C_*,c_*)$ is said to be {\itshape $\zeta$-regular/of determinant class} if each Laplacian $\Delta_n$ is of $\zeta$-regular/of determinant class. Just like for ordinary morphisms, the Novikov-Shubin Invaraints can be used to check whether a given complex is of determinant class: Namely, if $(C_*,c_*)$ be a Fredholm complex with positive Novikov-Shubin invariants, i.e. if \ $\alpha_n(C_*) \neq 0$ for each $n \in \mathbb N_0$, $(C_*,c_*)$ is of determinant class. 
 \begin{dfn}[$L^2$-torsion for complexes]\label{TorDef} Let $(C_*,c_*)$ be a Fredholm complex of determinant class. Then
\begin{enumerate} \item if $(C_*,c_*)$ is of finite type, its {\itshape $L^2$-torsion} $T^{(2)}(C_*)$ is defined as 
\begin{equation} T_{(2)}(C_*) \coloneqq  \exp \left( \frac{1}{2}\sum_{n=0}^\infty (-1)^{n+1} \cdot n \cdot \ln (\vndet(\Delta_n)) \right). \end{equation} \item If $(C_*,c_*)$ is not of finite type and $\zeta$-regular, its {\itshape $\zeta$-regularized $L^2$-torsion} $T^{(2)}(C_*) \in \reals_{>0}$ is defined as \begin{equation} T_{(2)}(C_*) \coloneqq  \exp \left( \frac{1}{2}\sum_{n=0}^\infty (-1)^{n+1} \cdot n \cdot \ln (\zvndet(\Delta_n)) \right). \end{equation} \end{enumerate} \end{dfn} 

\subsection{The $L^2$-De Rham complex and analytic torsion}\label{SecDeRha}

Throughout, let $(M,g)$ be a smooth, connected, complete, simply-connected $n$-dimensional Riemannian manifold. Furthermore, fix a subgroup $G < \Isom^+(M,g)$, containing a lattice $\Gamma < G$, so that either one of the following two conditions hold:
\begin{enumerate}[label=(\subscript{A}{{\arabic*}})]
\item $M$ is a $G$-homogeneous space (i.e.\ the action of $G$ on $(M,g)$ by isometries is transitive) and there exists some uniform lattice $\Lambda < G$.
\item $G = \Gamma$ is a uniform lattice. 
\end{enumerate}
Given a complex, finite-dimensional representation $\rho \colon G \to \GL(V)$, consider the trivial bundle \begin{equation*} E = E^\rho \coloneqq M \times V \downarrow M \end{equation*} over $M$, with trivial flat connection. It comes equipped with a flat $G$-action given by \begin{equation*} \gamma . (x,v) \coloneqq (\gamma.x,\rho(\gamma)v). \end{equation*} From now on, we assume that we have found and fixed some $G$-equivariant Hermitian metric $h$ on $E$ and call the resulting Hermitian bundle $E \downarrow M$ {\bfseries $G$-equivariant}. The existence of $h$ is guaranteed when we assume $(A_2)$, or when we assume $(A_1)$ and the point stabilizer $G_x < G$ (for $x \in M$ arbitrary) is a compact subgroup (i.e.\ when $G = \Isom^+(M,g)$ and $(M,g)$ is a symmetric space). Unless explicitly stated otherwise, the examples yet to appear in this paper will all satisfy one of the previously mentioned properties $(A_1)$ and $(A_2)$, and all admit some $G$-equivariant Hermitian metric $h$.
\begin{dfn}\label{HOMDEF} The metric bundle $E \downarrow M$ is called {\itshape locally homogeneous} if for any two $x,y \in M$, such that either $x,y \in M \setminus \partial M$ or $x,y \in \partial M$, there exists open neighborhoods $U \ni x, V \ni y$ and a flat bundle isometry $F \colon E|_{U} \downarrow U \to E|_{V} \downarrow V$ satisfying $F(x) = y$. It is called (globally) {\itshape homogeneous} if one can choose $U = V = M$ in above construction. \end{dfn}
Observe that, if $(M,g)$ satisfies $(A_1)$, the choice of our metric $h$ ensures that the $G$-action on $E \downarrow M$, which is transitive on the base points, is an action by flat bundle isometries. Therefore, we easily deduce
\begin{corollary} If $(M,g)$ satisfies $(A_1)$, then the bundle $E \downarrow M$ is homogeneous. \end{corollary}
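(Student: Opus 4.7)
The plan is to exhibit, for any pair of points $x, y \in M$, a single global flat bundle isometry $F \colon E \to E$ with $F(x) = y$, built directly from the $G$-action on the total space. The remark made immediately before the statement already observes that the $G$-action on $E$ is an action by flat bundle isometries; combining this with the transitivity of $G \acts M$ granted by $(A_1)$ should suffice, with $U = V = M$ in Definition \ref{HOMDEF}.

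Concretely, I would proceed as follows. Given $x, y \in M$, by transitivity of the $G$-action on $M$ pick some $\gamma \in G$ with $\gamma \cdot x = y$, and set $F(p, v) := (\gamma \cdot p, \rho(\gamma) v)$. Three properties then need to be checked. First, $F$ is a smooth bundle map covering the isometry $\gamma \colon (M,g) \to (M,g)$, which is immediate from the definition of the $G$-action on $E = M \times V$ and from the inclusion $G < \Isom^+(M,g)$. Second, $F$ preserves the flat structure: the trivial connection on $M \times V$ has as parallel sections exactly the constant sections $p \mapsto v_0$, and $F$ sends such a section to the constant section $p \mapsto \rho(\gamma) v_0$, which is again parallel; so horizontality (hence flatness of the bundle map) is preserved. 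Third, $F$ is a fiberwise isometry for $h$, which is precisely the standing assumption that $h$ is $G$-equivariant.

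Since $F(x) = y$ by construction and $F$ is defined globally on all of $M$, Definition \ref{HOMDEF} is satisfied with $U = V = M$. No serious obstacle arises: the corollary is a direct repackaging of the setup into the language of the definition. The only minor point worth double-checking is that under $(A_1)$ the boundary/interior dichotomy in Definition \ref{HOMDEF} is vacuous --- this holds because a transitive action of $G$ by diffeomorphisms on $M$ forces $\partial M$ to be either empty or all of $M$, so any pair $x, y$ automatically belongs to the same stratum.
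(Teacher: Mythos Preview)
Your proof is correct and follows exactly the same approach as the paper, which simply notes (immediately before the corollary) that the $G$-equivariance of $h$ makes the transitive $G$-action on $E \downarrow M$ an action by flat bundle isometries, and then states the corollary as an immediate consequence. You have merely unpacked this observation in more detail, including the helpful remark about the boundary stratum being vacuous under $(A_1)$.
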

 Define $\Omega(M) \coloneqq \bigoplus_{k=0}^n \Omega^k(M)$ to be the ordinary De Rham complex and let \begin{equation*} \Omega(M,E) \coloneqq \Omega(M) \otimes_{C^\infty(M,\ceals)} \Gamma(M,E) \end{equation*} be the De Rham complex of $E$-valued differential forms, with natural grading $\Omega^*(M,E)$ inherited from the grading on $\Omega(M)$ and with differential \begin{equation*} d^* \colon \Omega^*(M,E) \to \Omega^{*+1}(M,E) \end{equation*} induced by the trivial flat connection on $E$ in the usual fashion. We obtain natural $\ceals$-linear actions of $G$ on both $\Omega^*(M)$ and the space of sections $\Gamma(M,E)$, which together induce a natural $\ceals$-linear action of $G$ on $\Omega^*(M,E)$. Moreover, for each $0 \leq p \leq n$, the pair of metrics $g$ and $h$ together give rise to an inner product $\langle \; , \; \rangle_x$ on each fiber vector space $(\Lambda^p T^*M \otimes_{\ceals} E)_x$, and therefore also to an inner product on the subspace $\Omega_c^*(M,E) \subseteq \Omega^*(M,E)$ of compactly supported forms, defined via \begin{equation}\label{IP1} \langle \omega,\sigma \rangle \coloneqq \int_{M} \langle \omega(x),\sigma(x) \rangle_x \: dx.\end{equation} Here, as everywhere else, $dx \in \Omega^n(M)$ denotes the volume form constructed from the Riemannian metric $g$. Evidently, the $G$-action on $\Omega^*(M,E)$ leaves the subspace $\Omega_c^*(M,E)$ invariant, and $G$-equivariance of the two metrics $g$ and $h$ guarantees that this restricted action is by isometries. We define \begin{equation} \Omega_{(2)}^*(M,E) \coloneqq L^2( \Omega_c^*(M,E)) \end{equation} to be the Hilbert space obtained via $L^2$-completion of $\Omega_c^*(M,E)$ and observe that the $G$-action on $\Omega_c^*(M,E)$ extends to a linear isometric $G$-action on $\Omega_{(2)}^*(M,E)$. Moreover, the differential $d^*$ admits a $G$-equivariant {\itshape minimal closure} \cite[Lemma $2.1.3$]{Ich} when regarded as an unbounded, densely defined operator between $\Omega_{(2)}^*(M,E)$ to $\Omega_{(2)}^{*+1}(M,E)$. Since there is no danger of ambiguity, this closure will from now on be denoted by $d^*$.  \\ Restricting the action of $G$ on $\Omega_{(2)}^*(M,E)$ onto the lattice $\Gamma$, it follows that $\Omega_{(2)}^*(M,E)$, along with the $\Gamma$-equivariant closed, unbounded differential $d^*$ has the structure of a Hilbert $\vnN(\Gamma)$-cochain complex. In fact, for any $\Gamma$-fundamental domain $\mathcal F \subseteq M$ with $E|_{\mathcal F} \downarrow \mathcal F$ denoting the restriction of $E$ over $\mathcal F$, $\Omega_{(2)}^*(M,E)$ is $\Gamma$-equivariantly and isometrically isomorphic to the Hilbert space tensor product $L^2(\Gamma) \otimes  \Omega_{(2)}^*(\mathcal F,E|_{\mathcal F})$ (with $\Gamma$-action on $L^2(\Gamma) \otimes  \Omega_{(2)}^*(\mathcal F,E|_{\mathcal F})$ given by the natural action on the left factor). \\
As usual, the Riemannian metric $g$ gives rise to a direct sum decomposition of the restricted cotangent bundle $T^*M|_{\partial M} = T^*\partial M \oplus N^*\partial M$, where $N^*\partial M$ denotes the $1$-dimensional real {\itshape conormal} bundle over $M$. Consequently, we obtain for $0 \leq p \leq n$ a direct sum decomposition \[ (\Lambda^p T^*M \otimes E)|_{\partial M} = (\Lambda^{p} T^*\partial M \otimes E) \oplus (\Lambda^{p-1} T^*\partial M \otimes N^*\partial M \otimes E). \] The projection $\vec{t} \colon  (\Lambda^p T^*M \otimes E)|_{\partial M} \to \Lambda^{p} T^*\partial M \otimes E$  onto the first summand is called the {\itshape tangential boundary projection}, while the projection $\vec{n} \colon (\Lambda^p T^*\partial M \otimes E)|_{\partial M} \to \Lambda^{p-1} T^*\partial M \otimes N^*\partial M \otimes E $ onto the second summand is called the {\itshape normal boundary projection}. The latter projection allows us to define \begin{align} & \delta^* \colon \dom(\delta^*) \to \dom(\delta^{*-1}), \\ & \dom(\delta^*) \coloneqq \{ \omega \in \Omega^*(M,E) \colon \vec{n} \omega = 0 \} \end{align} as the formal adjoint of the differential operator $d$, constructed with respect to the inner product \ref{IP1} and with {\itshape absolute boundary conditions}. Finally, we define the {\itshape Hodge-Laplacian} with absolute boundary conditions as \begin{align} &\Delta_* = \Delta_*[E] \coloneqq \delta^{*+1}d^* + d^{*-1}\delta^* \colon \dom(\Delta_*) \to \dom(\Delta_*), \\ & \dom(\Delta_*) \coloneqq \{ \omega \in \Omega^*(M,E) \colon \vec{n}\omega = \vec{n}d^*\omega = 0  \}. \end{align} When regarded as unbounded, densely defined operators over $\Omega^*_{(2)}(M,E)$, all three maps $\delta^*, d^*$ and $\Delta_*$ admit $G$-equivariant minimal closures \cite[Lemma $2.1.3$]{Ich}. {\itshape Henceforth, the symbols $\delta^*,d^*$ and $\Delta_*$ are used to describe the minimal closures of the corresponding differential operators}. Since each $\Omega^*_{(2)}(M,E)$ is a Hilbert $\vnN(\Gamma)$-module, it follows that \begin{equation} (\Omega_{(2)}^*(M,E),d^*) \colon \Omega_{(2)}^0(M,E) \xrightarrow{d^0} \Omega_{(2)}^1(M,E) \xrightarrow{d^1} \dots \dots \Omega_{(2)}^{n-1}(M,E) \xrightarrow{d^{n-1}} \Omega_{(2)}^n(M,E) \end{equation} is a Hilbert $\vnN(\Gamma)$-cochain complex. Moreover, $\delta^*$ is the Hilbert space adjoint of $d^*$, and vice versa \cite[Proposition $3.4.6$]{Ich}. Finally, $\Delta_* $ equals the $L^2$-Laplacian of the complex $(\Omega_{(2)}^*(M,E),d^*)$ which was defined in \ref{LAPDEF}, and is thus in particular self-adjoint \cite[Theorem $3.4.1$]{Ich}. \\ 
Given a bounded Borel function $\phi$ over $\mathbb R^+$, let $\phi(\Delta_p) \colon \Omega^k_{(2)}(M,E) \to \Omega^k_{(2)}(M,E)$ be the $1$-parameter family of bounded morphisms of Hilbert $\vnN(\Gamma)$-modules constructed via the spectral theorem applied the the self-adjoint $\Delta_p$. The following is well-known to be true:
\begin{enumerate}
\item \cite[Proposition $3.4.2$]{Ich} $\phi(\Delta_p)$ admits a smooth {\itshape kernel} $\phi(\Delta_p)(x,y)$. This is a smooth section of the bundle \\ $\hom\left(\pi_2^*(\Lambda^p T^*M \otimes E),\pi_1^*(\Lambda^p T^*M \otimes E) \right) $ over the product $M \times M$ with (and uniquely determined by) the property that for each $\sigma \in \Omega_c^k(M,E)$, one has for each $x \in M$ \begin{equation} \phi(\Delta_p)\sigma(x) = \int_{M} \phi(\Delta_p)(x,y)\sigma(y) dy. \end{equation}.
\item \cite[Propisition $2.2.1$]{Ich} One has \begin{equation}\label{equivariance} \tr(\phi(\Delta_p)(x,x)) = \tr(\phi(\Delta_p)(\gamma.x,\gamma.x)) \end{equation} for each $\gamma \in G$ and each $x \in M$. Moreover, 
\item  \cite[Proposition $4.2.2$]{Ich} if $\mathcal F \subseteq M$ is a $\Gamma$-fundamental domain, one has \begin{equation} \vntr(\phi(\Delta_p)) = \int_{\mathcal F} \tr(\phi(\Delta_p)(x,x)) dx < \infty. \end{equation}
\end{enumerate}
In particular, it follows that each member of the family $(e^{-t\Delta_p})_{t > 0}$ is of $\Gamma$-trace class, which is why $(\Omega_{(2)}^*(M,E),d^*)$ is a Fredholm complex. In fact, the function $\vntr(e^{-t\Delta_p})$ has an asymptotic expansion near $0$, which can be made (more or less) explicit:
\begin{theorem}[Asymptotic expansion of the twisted heat kernel]\cite[Theorem $4.3.2$]{Ich} \label{asymptbdgeom}
Under the assumptions from above, it holds that for each $0 \leq p \leq n$ and each $0 \leq i \leq n$, there exists forms $\alpha_i[E] \in \Omega^n(M)$ and $\beta_i[E|_{\partial M}] \in \Omega^{n-1}(\partial M)$, such that for $t \to 0$, we have the asymptotic expansion
\begin{align} & \vntr(e^{-t\Delta_p }) =  \sum_{i = 0}^n t^{-(n-i)/2} \left(\int_{\mathcal F} \alpha_i[E] + \int_{\partial \mathcal F}  \beta_i[E|_{\partial M}]  \right) + \mathcal O(t^{1/2}), \end{align} where
\begin{itemize} \item $\mathcal F$ is a fundamental domain for the $\Gamma$-action on $M$, and \item $\partial \mathcal F$ is a fundamental domain for the induced $\Gamma$-action on $\partial M$. \end{itemize} Furthermore, both $\alpha_i[E]$ and $\beta_i[E|_{\partial M}]$ are {\bfseries invariant under local bundle isometries} in the way described as follows: Suppose that $(M',g')$ is another $n$-dimensional complete, oriented Riemannian manifold and $E' \downarrow M'$ a flat Hermitian bundle over $M'$. Then, if  $U \subseteq M$ and $V \subseteq M$ are open subsets, such that there exists a flat bundle isometry $E_U \downarrow U \bundf E'_V \downarrow V$, we have for all $i \in \mathbb N$ \begin{align*}& \alpha_i[E] \equiv \alpha_i[E'] \circ F \hspace{2.1cm} \text{on} \; U, \\& \beta_i[E|_{\partial M}] \equiv \beta_i[E'|_{\partial M'}] \circ F \hspace{1cm} \text{on} \; U \cap \partial M.\end{align*}
\end{theorem}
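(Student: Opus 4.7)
The strategy is to reduce the von Neumann trace computation to classical pointwise heat kernel asymptotics on the diagonal, and then to exploit locality to obtain invariance under local bundle isometries.

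First, I would apply item (3) of the list of heat kernel properties stated just above, namely $\vntr(e^{-t\Delta_p}) = \int_{\mathcal F} \tr(e^{-t\Delta_p}(x,x)) dx$, to turn the problem into a pointwise question on $\mathcal F$. Second, I would invoke the classical parametrix construction for the heat kernel of an elliptic operator on a Riemannian manifold with boundary (Minakshisundaram--Pleijel in the boundaryless case, extended by Seeley/Greiner and Gilkey to the boundary case). For $x \in M \setminus \partial M$, this yields a Hadamard expansion producing forms $\alpha_i(x)[E] \in \Lambda^n T_x^*M$, each a universal polynomial in the jets of $g$, $h$ and the flat connection $d$ at $x$, such that
\begin{equation*}
\tr(e^{-t\Delta_p}(x,x))\, dx \;=\; \sum_{i=0}^{n} t^{-(n-i)/2}\, \alpha_i[E](x) \;+\; O(t^{1/2})
\end{equation*}
uniformly on compact sets away from $\partial M$. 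Near $\partial M$ an additional boundary layer in the parametrix is needed, imposing the absolute boundary conditions; this produces boundary forms $\beta_i[E|_{\partial M}] \in \Omega^{n-1}(\partial M)$, expressed universally in the germs of $g$, $h$, and the second fundamental form of $\partial M$, which absorb the defect in uniformity.

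Third, the invariance under local bundle isometries is immediate from the locality of the parametrix: if $F \colon E|_U \to E'|_V$ is a flat bundle isometry, the Laplacians $\Delta_p[E]|_U$ and $\Delta_p[E']|_V$ are intertwined by $F$, so the Hadamard coefficients, being constructed algorithmically from the local symbol data, pull back covariantly under $F$. The same argument applies verbatim at boundary points to $\beta_i[E|_{\partial M}]$.

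The main obstacle is the final step: integrating the pointwise expansion over $\mathcal F$ term by term with uniformly controlled remainder, since $\mathcal F$ need not be compact (e.g.\ when $\Gamma$ is a non-uniform lattice in $G$). Here the assumptions $(A_1)$ and $(A_2)$ become essential. Under $(A_1)$, the $G$-equivariance of the metrics and of $\Delta_p$ (plus the existence of a uniform lattice $\Lambda < G$) forces the coefficient forms and parametrix remainder to be $G$-invariant, so uniformity on $\mathcal F$ follows from uniformity on a single compact $\Lambda$-fundamental domain. Under $(A_2)$, $\mathcal F$ is itself relatively compact and there is nothing to do. In either case the pointwise $O(t^{1/2})$ error integrates to an $O(t^{1/2})$ error, yielding the stated formula.
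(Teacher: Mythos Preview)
The paper does not actually prove this theorem; it is quoted from the author's thesis \cite[Theorem~4.3.2]{Ich}, and the only hint given in the text is the remark (immediately preceding Theorem~\ref{heat}) that the Principle of not feeling the boundary ``has also been used to prove the existence of the asymptotic expansions of Theorem~\ref{asymptbdgeom}.'' So there is no in-paper proof to compare against line by line.

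That said, your outline is the standard route and is sound. A few remarks on how it relates to the hint the paper does give. The reference to Theorem~\ref{heat} suggests that, rather than constructing the full Seeley--Greiner boundary parametrix in one stroke as you propose, the cited proof separates interior and boundary contributions: away from $\partial M$ one compares $e^{-t\Delta_p[E]}(x,x)$ with the heat kernel of a boundaryless model via Theorem~\ref{heat}, reducing to the classical Minakshisundaram--Pleijel expansion and producing the $\alpha_i$; the boundary layer is then handled on a collar. Both organizations yield the same local coefficient forms, and both give the locality statement for free, exactly as you argue.

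Your treatment of the uniformity-of-remainder issue is correct and worth making explicit. Under $(A_1)$ the $G$-action is transitive, so $\partial M=\emptyset$; item~(2) (Equation~\eqref{equivariance}) makes $\tr(e^{-t\Delta_p}(x,x))$ constant in $x$, and since the $\alpha_i$ are built from local jet data they are $G$-invariant as well, hence the remainder is constant and a single-point estimate (obtainable e.g.\ on the compact quotient $\Lambda\backslash M$, which is where the uniform lattice $\Lambda$ enters) propagates to all of $\mathcal F$. Under $(A_2)$ the fundamental domain is compact and the classical estimates apply directly. This is exactly the dichotomy the paper's standing assumptions are designed to support.
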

The fact that the differential forms $\alpha_i[E]$ and $\beta_i[E|_{\partial M}]$ are invariant under local bundle isometries has the following crucial consequence:
\begin{corollary}\label{IMPORTANTE} In the situation of Theorem \ref{asymptbdgeom}, suppose further that the bundle $E \downarrow M$ under inspection is locally homogeneous. Then it follows that for each $0 \leq i \leq n$, there exists constants $a_i,b_i \in \mathbb R_{\geq 0}$, such that \begin{align} & \alpha_i[E] = a_i \cdot d\Vol_g, \label{identa} \\& \beta_i[E] = b_i \cdot d\Vol_{g|_{\partial M}}.  \label{identb} \end{align} The constant $a_i$ depends only on the {\itshape local} isometry type of the restricted bundle $E_{\mathring{M}} \downarrow \mathring{M}$, while the constant $b_i$ depends only on the local isometry type of the restricted bundle $E_{U} \downarrow U$, where $U$ is an arbitrary neighborhood of $\partial M$. \end{corollary}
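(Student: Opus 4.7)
The plan is to combine the invariance of $\alpha_i[E]$ and $\beta_i[E|_{\partial M}]$ under local bundle isometries (guaranteed by Theorem \ref{asymptbdgeom}) with the local homogeneity hypothesis in order to pin down both forms as constant multiples of the respective volume forms. Since $\alpha_i[E] \in \Omega^n(M)$ is a top-degree form on the oriented manifold $M$, we can uniquely write $\alpha_i[E] = f_i \cdot d\Vol_g$ for a smooth function $f_i \colon M \to \mathbb{R}$; analogously $\beta_i[E|_{\partial M}] = g_i \cdot d\Vol_{g|_{\partial M}}$ for a smooth $g_i \colon \partial M \to \mathbb{R}$. It therefore suffices to prove that $f_i$ is constant on $\mathring M$ and that $g_i$ is constant on $\partial M$.

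For the interior claim, fix a basepoint $x_0 \in \mathring M$ and let $y \in \mathring M$ be arbitrary. By local homogeneity there exist open neighborhoods $U \ni x_0$ and $V \ni y$ together with a flat bundle isometry $F \colon E|_U \to E|_V$ sending $x_0$ to $y$. Specializing the invariance statement of Theorem \ref{asymptbdgeom} to $(M',g',E') = (M,g,E)$ yields the equality of forms $\alpha_i[E]|_U = F^* \alpha_i[E]|_V$. Because the base map of $F$ is an isometry of Riemannian manifolds, one has $F^* d\Vol_g = \pm d\Vol_g$ on $U$, so comparing coefficients gives $f_i(x_0) = \pm f_i(y)$. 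Thus $f_i$ takes at most two values $\{a_i, -a_i\}$ on the connected set $\mathring M$, and smoothness forces $f_i$ to be identically one of them; after absorbing a global sign into $a_i$, we conclude $\alpha_i[E] = a_i \cdot d\Vol_g$ on $\mathring M$, which extends to all of $M$ by continuity of both sides.

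For the boundary claim, the same reasoning is applied with the ``both on the boundary'' branch of Definition \ref{HOMDEF}: the local bundle isometries restrict to flat bundle isometries between boundary neighborhoods, and their base maps restrict to Riemannian isometries of open pieces of $\partial M$. Since local homogeneity provides such an isometry between \emph{any} two boundary points, in particular between points lying in distinct connected components of $\partial M$, the two-valued + continuity + connectedness argument applied componentwise, then patched across components using the same local isometries, forces $g_i$ to equal one common constant $b_i$ on all of $\partial M$. Finally, the dependence assertion that $a_i$ (resp.\ $b_i$) depends only on the local isometry type of $E_{\mathring M} \downarrow \mathring M$ (resp.\ of $E_U \downarrow U$) is immediate from the same invariance principle, now applied to two distinct bundles: whenever a local isometry identifies an open piece of $E \downarrow M$ with one of $E' \downarrow M'$, Theorem \ref{asymptbdgeom} equates the respective $\alpha_i$, hence the densities, on matched sets.

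The only subtlety I expect in executing this plan is the orientation bookkeeping: a general flat bundle isometry need not preserve the orientation of the base, so invariance of $\alpha_i[E]$ a priori only controls $f_i$ up to sign. This is precisely what the continuity/connectedness step handles, and it does not require selecting orientation-preserving isometries in advance. Everything else is routine once Theorem \ref{asymptbdgeom} and Definition \ref{HOMDEF} are combined in the manner described.
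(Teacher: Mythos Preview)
Your argument is correct and is precisely the reasoning the paper has in mind; the paper itself gives no proof and treats the corollary as an immediate consequence of the invariance clause in Theorem~\ref{asymptbdgeom} together with Definition~\ref{HOMDEF}. Your orientation/sign discussion is a harmless extra precaution: in the paper's setting the relevant local bundle isometries arise from elements of $\Isom^+$, so $F^* d\Vol_g = d\Vol_g$ and the $\pm$ never actually occurs (and, as you note, even without that the continuity argument on a connected piece would settle it).
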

 Furthermore, we can conclude:
\begin{corollary}\label{zetaregi} If the complex $\Omega_{(2)}^*(M,E)$ is $L^2$-acyclic, then it is also $\zeta$-regular. Namely, for any fixed $0 \leq p \leq n$ the integral expression \begin{align} \zeta_p(s) \coloneqq \zeta_{\Delta_p}(s) = \Gamma(s)^{-1} \int_{0}^{1} t^{s-1} \vntr(e^{-t\Delta_p^\perp[E]}) dt,\end{align} determines a holomorphic function for sufficiently large $s >> 0$, admitting a meromorphic extension onto all of $\ceals$ that is regular at $0$. In fact, it holds that
\begin{align}
& \frac{d}{ds} \zeta_p(s)|_{s=0}  =  \int_{0}^{1} \left( \vntr(e^{-t \Delta_p}) - \sum_{i=0}^n t^{-(n-i)/2} \cdot \kappa_i \right) \: \frac{dt}{t} + \sum_{i = 0}^n c(i,n) \cdot \kappa_i,
\end{align}
where \begin{align}&  \kappa_i \coloneqq \int_{\mathcal F} \alpha_i[E] + \int_{\partial \mathcal F}  \beta_i[E|_{\partial M}], \\& c(i,n) \coloneqq \begin{cases}  \frac{2}{i-n} & i \neq n, \\ -\Gamma'(1) & i = n. \end{cases} \end{align}
\end{corollary}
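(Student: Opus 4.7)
The plan is a standard Mellin transform argument, reducing everything to the short-time heat expansion of Theorem \ref{asymptbdgeom}. First, since $\Omega_{(2)}^*(M,E)$ is $L^2$-acyclic, $\vndim(\ker\Delta_p) = b_p^{(2)}(\Omega_{(2)}^*(M,E)) = 0$. The spectral decomposition $\Omega_{(2)}^p(M,E) = \ker(\Delta_p) \oplus \ker(\Delta_p)^\perp$ identifies $e^{-t\Delta_p}$ with the identity on the first summand and with $e^{-t\Delta_p^\perp}$ on the second, so $\vntr(e^{-t\Delta_p^\perp}) = \vntr(e^{-t\Delta_p})$ for every $t>0$. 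Theorem \ref{asymptbdgeom} therefore yields directly
$$\vntr(e^{-t\Delta_p^\perp}) = \sum_{i=0}^n t^{-(n-i)/2}\kappa_i + \mathcal{O}(t^{1/2}) \quad\text{as } t\to 0^+,$$
with $\kappa_i$ as defined in the statement. In particular, $e^{-t\Delta_p^\perp}$ is of $\Gamma$-trace class for all $t > 0$.

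Next, I would split the defining integral as
$$\int_0^1 t^{s-1}\vntr(e^{-t\Delta_p^\perp})\,dt = J(s) + \sum_{i=0}^n \kappa_i \int_0^1 t^{s-1-(n-i)/2}\,dt,$$
where $J(s) \coloneqq \int_0^1 t^{s-1}\bigl[\vntr(e^{-t\Delta_p^\perp}) - \sum_{i=0}^n \kappa_i t^{-(n-i)/2}\bigr]\,dt$. The $\mathcal{O}(t^{1/2})$ error term guarantees that $J(s)$ converges absolutely and is holomorphic in the half-plane $\mathrm{Re}(s) > -1/2$. Each elementary integral in the sum evaluates to $(s - (n-i)/2)^{-1}$ for $\mathrm{Re}(s)$ large enough and extends to a meromorphic function on $\mathbb{C}$ with a simple pole at $s = (n-i)/2$. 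Multiplying through by $1/\Gamma(s)$, whose simple zero at $s = 0$ cancels the potential pole at $s = (n-n)/2 = 0$ coming from the $i=n$ summand, shows that $\zeta_p(s)$ is meromorphic on $\mathbb{C}$ and regular at $s = 0$.

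Finally, to extract the derivative at $s=0$, I use that $h(s) \coloneqq 1/\Gamma(s) = s/\Gamma(s+1)$ satisfies $h(0) = 0$ and $h'(0) = 1$, so that $(A\cdot h)'(0) = A(0)$ for any $A$ analytic at $0$. Applied to $A(s) = J(s)$ this reproduces the integral term in the statement. For $i < n$, the summand $\kappa_i h(s)/(s - (n-i)/2)$ contributes $\kappa_i \cdot (-2/(n-i)) = c(i,n)\kappa_i$ at $s = 0$, while for $i = n$ one rewrites $h(s)/s = 1/\Gamma(s+1)$, whose derivative at $0$ equals $-\Gamma'(1) = c(n,n)$. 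Summing the three contributions gives the asserted formula for $\zeta_p'(0)$.

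The only real bookkeeping hazard is the interplay of $\Gamma(s)^{-1}$ with the pole at $s = (n-n)/2 = 0$ in the $i = n$ term; being careful there (and with the signs in $c(i,n) = 2/(i-n)$) is what guarantees that the constants come out as claimed. Apart from this minor subtlety, the argument is a routine Mellin transform unfolding of Theorem \ref{asymptbdgeom}.
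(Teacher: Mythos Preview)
Your proof is correct and follows essentially the same route as the paper's own argument: both use the acyclicity to replace $\Delta_p^\perp$ by $\Delta_p$, subtract off the asymptotic expansion of Theorem~\ref{asymptbdgeom} to isolate a holomorphic remainder $J(s)$ (the paper's $H(s)$), compute the elementary integrals $\int_0^1 t^{s-1-(n-i)/2}\,dt = (s-(n-i)/2)^{-1}$, and then use the identities $(\Gamma(s)^{-1})(0)=0$, $(\Gamma(s)^{-1})'(0)=1$, and $(s\Gamma(s))^{-1}{}'(0)=-\Gamma'(1)$ to read off the derivative at $s=0$. The only differences are notational.
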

\begin{proof} Acyclicity of the complex $\Omega_{(2)}^*(M,E)$ implies that we have $\Delta_p = \Delta_p^\perp$. The complex Gamma function $\Gamma$ is well-known to satisfy the identities $\Gamma(1) = 1$ and $\Gamma(s+1) = s \cdot \Gamma(s)$ for any $s \in \ceals$.  Therefore, if $h$ is any function holomorphic at $s = 0$, we can compute
\begin{equation} \label{identity11}\frac{d}{ds} \Gamma(s)^{-1} \cdot h(s)|_{s = 0} = h(0).  \end{equation}
Similarly, the function $(\Gamma(s) \cdot s)^{-1}$ is holomorphic near and vanishes at $s = 0$. Also, its derivative at $s = 0$ can be computed as
\begin{equation} \label{identity12} \frac{d}{ds} (\Gamma(s) \cdot s)^{-1}|_{s=0} = -\Gamma'(1). \end{equation} 
With $\kappa_i$ defined as in the statement of the Corollary, we write for $t > 0$ \begin{equation} \vntr(e^{-t\Delta_p }) =\overbrace{ (\vntr(e^{-t\Delta_p }) - \sum_{i=0}^n t^{-(n-i)/2} \kappa_i)}^{\eqqcolon f(t)} + \sum_{i=0}^n t^{-(n-i)/2} \kappa_i. \end{equation} By Theorem \ref{asymptbdgeom}, the function $f(t)$ lies in $\mathcal O(t^{1/2})$ as $t \to 0$, which is why the $t$-integral  
\\ $H(s) \coloneqq \int_{0}^1 t^{s-1} \cdot f(t) dt$ is holomorphic near $s = 0$. Using \ref{identity11} and \ref{identity12}, we finally compute 
\begin{align*}& \frac{d}{ds} \zeta_p(s)|_{s=0} = \frac{d}{ds}\left(\Gamma(s)^{-1} \cdot H(s)\right)|_{s =0} + \sum_{i=0}^n \kappa_i \cdot \frac{d}{ds}(\Gamma(s)^{-1} \cdot \frac{1}{s - \frac{n - i}{2}})|_{s = 0} \\ & = H(0) + \sum_{i=0}^n \kappa_i \cdot c(i,n) = \int_{0}^{1} \left(\vntr(e^{-t\Delta_p }) - \sum_{i=0}^n t^{-(n-i)/2} \cdot \kappa_i \right) \frac{dt}{t} + \sum_{i=0}^n \kappa_i \cdot c(i,n). 
\end{align*} as claimed. 
\end{proof}

\begin{remark} Observe that, if the metric bundle $E \downarrow M$ is locally homogeneous, we get that \begin{equation} \kappa_i = \Vol_g(\mathcal F) \cdot a_i + \Vol_{g|_{\partial M}}(\partial \mathcal F) \cdot b_i.\end{equation} for each $0 \leq i \leq n$, with $a_i$ and $b_i$ the constants from Corollary \ref{IMPORTANTE}.
 \end{remark}

\begin{dfn} Assume that the Hilbert $\vnN(\Gamma)$-cochain complex $\Omega_{(2)}(M,E)$ is of determinant class. Then the {\itshape analytic $L^2$-torsion} $T_{(2)}^{An}(\Gamma \backslash M,\rho)$  is defined as the regularized $L^2$-torsion of the complex $\Omega_{(2)}(M,E)$, that is \begin{equation} T_{(2)}^{An}(\Gamma \backslash M, \rho) = T^{(2)}(\Omega_{(2)}^*(M,E)) \in \reals_{>0}. \end{equation} \end{dfn}

Although supressed in the notation, notice that $T_{(2)}^{An}(\Gamma \backslash M, \rho)$ {\itshape a priori} still depends on the choice of $g$ and $h$. At times, when highlighting these dependencies becomes relevant, we will therefore denote $T_{(2)}^{An}(\Gamma \backslash M,\rho)$ by $T_{(2)}^{An}(\Gamma M,\rho,g,h)$ instead. 
A substantial amount of this paper will be devoted to showing that the bundles under consideration have de Rham complexes which are of determinant class, thus admit an analytic torsion.

\subsection{The Principle of not feeling the boundary}\label{1.3}

A key technique that we will involve later on lies in the heat kernel comparison between a given manifold $N$ and a closed submanifold $M \subseteq N$ with boundary. Very roughly stated, the next result shows that, within all regions of $M$ that are sufficiently far away $\partial M$, the heat kernels of $M$ and $N$ are relatively similar. While not demonstrated here, this so-called ``Principle of not feeling the boundary'' has also been used to prove the existence of the asymptotic expansions of Theorem \ref{asymptbdgeom}.

\begin{theorem}[Principle of not feeling the boundary]\label{heat}
With the bundle $E \downarrow M$ defined as in the previous section, let $N \subseteq M$ be a (topologically) closed, codimesion $0$ submanifold and let $E|_{N} {\downarrow} N$ be the flat bundle over $N$, obtained by restriction of $E$ to $N$. For $p \geq 0$, let $\Delta_p[E] $ and $\Delta_p[E|_N] $ be the corresponding Bochner-Laplace operators on twisted $p$-forms on the bundle $E$, respectively $E|_N$. For $t > 0$ and $k \in \mathbb N_0$, consider the two bounded operators \begin{align*}& \Delta_p^k[E] e^{-t\Delta_p[E]} \colon \Omega_{(2)}^*(M,E) \to \Omega_{(2)}^*(M,E) \\& \Delta_p^k[E|_{N}] e^{-t\Delta_p[E|_{N}]} \colon \Omega_{(2)}^*(N,E|_{N}) \to \Omega_{(2)}^*(N,E|_{N}), \end{align*} and, for each $x,y \in N$, denote by \begin{align}\Delta_p^k[E]  e^{-t\Delta_p[E] }(x,y): E_x \to E_y, \\  \Delta_p^k[E|_N]  e^{-t\Delta_p[E|_N]}(x,y): E_x \to E_y \end{align} their respective smooth heat kernels. \\
Then the following two results hold true: \begin{enumerate} 
\item  There exists a constant $\kappa > 0$ depending only on the dimension of $M$, and, for each $k \in \mathbb N$ and any $D > 0$, a constant $C_k(D) > 0$, depending only on $D$ and the metric tensors of $g$ and $h$ (but {\bfseries not} on $N$), such that for any pair $x_0,y_0 \in N$ with $d_N(x_0) \coloneqq d(x_0, M \setminus N) \geq D$ and $d_N(y_0) \geq D$, we have the inequality \begin{equation} || \Delta_p[E] ^k e^{-t\Delta[E]}(x_0,y_0) - \Delta_p[E|_N] ^k e^{-t\Delta[E|_N]}(x_0,y_0) || \leq C_k(D) e^{-\frac{d_N(x_0) + d_N(y_0) + 2d(x_0,y_0)}{\kappa t}}. \end{equation}
\item For any $t_0 > 0$, there exists a constant $c(t_0)$, such that for all $t \geq t_0$, we have 
\begin{align} ||e^{-t\Delta_p[E] }(x,y)|| \leq c(t_0), \\ ||e^{-t\Delta_p[E|_N] }(x,y)|| \leq c(t_0). \end{align} \end{enumerate}
 \end{theorem}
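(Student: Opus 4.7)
The plan is to adapt the classical Cheeger-Gromov-Taylor finite-propagation-speed argument to the bundle-valued, equivariant setting of the theorem. For part (1), the strategy is to express both heat kernels through the wave semigroup, exploit the local (in the time parameter $s$) agreement of the two wave kernels by unit propagation speed, and then read off the decay from the Gaussian weight of the resulting Fourier representation.

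First I would use the Gaussian Fourier identity
\[ e^{-t\lambda^2} = \frac{1}{\sqrt{\pi t}} \int_0^\infty e^{-s^2/(4t)} \cos(s\lambda)\, ds \]
combined with the spectral theorem applied to $\sqrt{\Delta_p[E]}$ to write
\[ e^{-t\Delta_p[E]} = \frac{1}{\sqrt{\pi t}} \int_0^\infty e^{-s^2/(4t)} \cos(s\sqrt{\Delta_p[E]})\, ds, \]
and absorb the factor $\Delta_p^k$ via $\Delta_p^k e^{-t\Delta_p} = (-\partial_t)^k e^{-t\Delta_p}$, which produces polynomial weights in $s$ without harming the Gaussian decay. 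The identical representation holds with $E|_N$ in place of $E$.

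Next, I would invoke finite propagation speed: the distributional kernel $\cos(s\sqrt{\Delta_p[E]})(x_0, \cdot)$ is supported in $\overline{B_{|s|}(x_0)}$, so for $|s| < d_N(x_0)$ this support stays inside the interior of $N$. Since the bundles $E$ and $E|_N$ coincide on this ball and the wave never reaches $\partial N$, local uniqueness of the wave equation forces
\[ \cos(s\sqrt{\Delta_p[E]})(x_0, y_0) = \cos(s\sqrt{\Delta_p[E|_N]})(x_0, y_0); \]
by the analogous argument with the roles of $x_0$ and $y_0$ reversed, this equality persists for $|s| < \max(d_N(x_0), d_N(y_0))$. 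Setting $R_0 := \tfrac{1}{2}(d_N(x_0) + d_N(y_0) + 2d(x_0, y_0))$ and splitting the $s$-integral at $R_0$, the low-frequency parts cancel in the difference of heat kernels, leaving only the tail over $s \geq R_0$. The Gaussian weight $e^{-s^2/(4t)}$ then produces decay of order $e^{-R_0^2/(4t)}$, which, using the lower bound $R_0 \geq D$, can be majorized by $C_k(D)\, e^{-2R_0/(\kappa t)}$ for a suitable $\kappa$ depending only on $\dim M$, matching the desired form.

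The main obstacle is that the wave kernels above are distributions, not functions, in $s$, so both the pointwise equality and the tail integral require rigorous justification. To handle this, I would apply an elliptic regularization: for $L > n/2$, factor $\lambda^{2k} e^{-t\lambda^2} = (1+\lambda^2)^{-L} \cdot \bigl[(1+\lambda^2)^L \lambda^{2k} e^{-t\lambda^2}\bigr]$. The resolvent power $(1+\Delta_p[E])^{-L}$ has a uniformly bounded continuous kernel by the bounded local geometry of $M$ implied by hypotheses $(A_1)$ or $(A_2)$, while the bracketed factor, applied to the appropriately cut-off pieces of the Fourier side, yields Gaussian $L^\infty$ operator-norm bounds that translate directly into the claimed kernel estimates; the resulting constants then depend only on $D$, $k$, and the local geometry of $E \downarrow M$, not on $N$. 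Finally, part (2) follows from the semigroup identity $e^{-t\Delta_p}(x,y) = \int e^{-(t/2)\Delta_p}(x,z)\, e^{-(t/2)\Delta_p}(z,y)^*\, dz$ combined with the pointwise Cauchy-Schwarz estimate $||K_t(x,y)||^2 \leq K_t(x,x)\, K_t(y,y)$, whose right-hand side is uniformly bounded for $t \geq t_0$ via the on-diagonal asymptotic expansion of Theorem \ref{asymptbdgeom} applied at $t_0/2$.
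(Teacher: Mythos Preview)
The paper does not actually prove this theorem: its proof consists solely of citations to \cite[Theorem 2.26]{Lueck:hyp} for the trivial bundle and to the author's dissertation \cite[Theorem 3.5.6]{Ich} for the general case. Your finite-propagation-speed/Cheeger--Gromov--Taylor argument is precisely the method used in those references, so in substance your approach coincides with what the paper invokes.

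One technical point deserves tightening. You correctly establish that the two wave kernels agree for $|s| < \max(d_N(x_0), d_N(y_0))$, but then you split the integral at $R_0 = \tfrac{1}{2}(d_N(x_0) + d_N(y_0) + 2d(x_0,y_0))$ without explaining why the cancellation extends that far; in general $R_0$ can exceed $\max(d_N(x_0), d_N(y_0))$. The clean way to recover the stated exponent is to also use that both wave kernels vanish for $|s| < d(x_0,y_0)$ by finite propagation, so the difference is supported in $|s| \geq \max(d_N(x_0), d_N(y_0), d(x_0,y_0))$; since this maximum is bounded below by a fixed multiple of $d_N(x_0) + d_N(y_0) + 2d(x_0,y_0)$ and also by $D$, the Gaussian tail then yields the bound as stated (after absorbing constants into $\kappa$ and $C_k(D)$). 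Alternatively one may factor $e^{-t\Delta} = e^{-t\Delta/2} e^{-t\Delta/2}$ and apply the propagation argument to each factor separately, which produces the sum $d_N(x_0)+d_N(y_0)$ directly. For part (2), your semigroup/Cauchy--Schwarz reduction is correct; just note that the uniform on-diagonal bound at $t_0/2$ comes from the local parametrix (bounded geometry), not from the small-time asymptotic expansion per se.
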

 
\begin{proof} For the case that $E$ is the trivial bundle, see \cite[Theorem $2.26$]{Lueck:hyp}. The general case is proven similarly, see \cite[Theorem $3.5.6$]{Ich}. \end{proof}

\subsection{The combinatorial $L^2$-complex and topological $L^2$-torsion}\label{CombTopSec}
Let $\Gamma$ be a countable group acting freely and properly discontinuously on a simply-connected space $M$, so that the quotient $\Gamma \backslash M$ has the homotopy type of a finite CW-complex. It follows that there exists some finite $\Gamma$-CW complex $X$ that has $\Gamma$-homotopy type of the $\Gamma$-space $M$ (so that the quotient spaces $\Gamma \backslash M$ and $\Gamma \backslash X$ are homotopy equivalent). Pick one such $X$. Since $X$ is a CW-complex, we may consider the associated cellular cochain complex $C^*(X,\mathbb C) \coloneqq \bigoplus_{p=0}^n C^p(X,\mathbb C)$ with complex coefficients. \\ The cellular $\Gamma$-action on $X$ endows $C^*(X,\mathbb C)$ with
a structure of a complex of free and finitely generated left-$\mathbb C[\Gamma]$-modules, with $\ceals[\Gamma]$-equivariant differential $\delta^* \colon C^*(X,\mathbb C) \to C^{*+1}(X,\mathbb C)$ and $\mathbb C[\Gamma]$-bases given by explicit representatives for each cell orbit $\pm \Gamma.e$, where $e$ ranges over all the cocells of $X$. For each $p \in \mathbb N_0$, we declare $m_p \in \mathbb N_0$ to be the number of cell orbits of dimension $p$. \\ Next, use the representation $\rho \colon G \to \GL(V)$ to form the tensor product of vector spaces \begin{equation} C^*(X,\rho) \coloneqq C^*(X,\mathbb C) \otimes_{\mathbb C} V. \end{equation} Each space $C^p(X,\rho)$ again also comes equipped with the structure of a (left) $\mathbb C[\Gamma]$-module, induced by the (left) diagonal $\Gamma$-action given by $\gamma.(x \otimes v) \coloneqq \gamma.x \otimes \rho(\gamma)v$ on elementary tensors.
Equipped with the $\ceals[\Gamma]$-equivariant differential \begin{equation*} \delta^*_\rho \coloneqq \delta^* \otimes_{\ceals} \unit_{V} \colon C^*(X,\rho) \to C^{*+1}(X,\rho), \end{equation*} it is still a {\itshape complex} of {\itshape free, finitely generated} left $\mathbb C[\Gamma]$-modules. Namely, let $E \subseteq C^*(X,\ceals)$ be a collection of cocells, precisely one for each cell orbit and each equipped with some orientation. Additionally fixing a $\ceals$-base $B \subseteq V$, one easily checks that the resulting {\itshape finite} set  $E \otimes B = \{ e \otimes b \colon e \in E \wedge b \in B \}$ is a $\ceals[\Gamma]$-base of $C^*(X,\rho)$.\\ In particular, its $\Gamma$-orbit $\Gamma.\left(E \otimes B\right)$ is a $\ceals$-base for $C^*(X,\rho)$  (which is infinite whenever $\Gamma$ is infinite). Equipping $C^*(X,\rho)$ with the unique inner product, with respect to which the set $\Gamma. \left(E \otimes B\right)$ is orthonormal, and forming the corresponding $L^2$-completion, we obtain a cochain complex of Hilbert spaces $C^*_{(2)}(X,\rho)$. It is easily checked that $C^*_{(2)}(X,\rho)$ has the structure of a Hilbert $\vnN(\Gamma)$-cochain complex {\itshape of finite type}. More precisely, the differential $d^*_\rho$ extends to a bounded, $\ceals[\Gamma]$-equivariant operator from $C^*_{(2)}(X,\rho)$ to $C^{*+1}_{(2)}(X,\rho)$ (also denoted by $d^*_\rho$ throughout) and, for each $p \in \mathbb N_0$, there is an isometric isomorphism of Hilbert $\vnN(\Gamma)$-modules \begin{equation} C^p_{(2)}(X,\rho) \cong L^2(\Gamma)^{m_p \cdot \dim(\rho)}. \end{equation} 
The last statement implies that each $C^p_{(2)}(X,\rho)$ is a free, finitely-generated Hilbert $\vnN(\Gamma)$-module with finite {\itshape von Neumann Dimension} 
\begin{equation} \vndim(C^p_{(2)}(X,\rho)) = m_p \cdot \dim(\rho).  \end{equation} We remark that the {\itshape inner product structure} on $C^*_{(2)}(X,\rho)$ clearly depends on the choice of the set $E \otimes B$, while the isomorphism class of the complex $C^*_{(2)}(X,\rho)$ (as a complex of Hilbert $\vnN(\Gamma)$-modules) does {\itshape not}. As such, both $\vndim(C^p_{(2)}(X,\rho))$, as well as any of the spaces and quantities yet to be defined {\itshape apart from the $L^2$-torsion} are independent of the explicit choice of the set $E \otimes B$. 
\begin{dfn}\label{TopBetDef} For $p \in \mathbb N_0$, the $p$-th {\itshape topological $L^2$-Betti number} $b_{(2),p}^{Top}(\Gamma \backslash M,\rho)$ of the pair $(\Gamma \backslash M,\rho)$ is defined as 
\begin{equation} b_{(2),p}^{Top}(\Gamma \backslash M,\rho) \coloneqq b_p^{(2)}(C^*_{(2)}(X,\rho)). \end{equation} Similarly, the $p$-th {\itshape topological Novikov-Shubin Invariant} $\alpha^{Top}_p(\Gamma \backslash M,\rho)$ of the pair $(\Gamma \backslash M,\rho)$ is defined as \begin{equation} \alpha_p^{Top}(\Gamma \backslash M,\rho) \coloneqq \alpha_p(C^*_{(2)}(X,\rho)). \end{equation} \end{dfn}
As the notation indicates, the numbers $b_{(2),p}^{Top}(\Gamma \backslash M,\rho), \alpha_p^{Top}(\Gamma \backslash M,\rho)$ are independent of the specific choice of $\Gamma$-CW complex $X$ homotopy equivalent to $M$. That is because if $X'$ is another finite $\Gamma$-CW complex that is $\Gamma$-homotopy equivalent to $M$, the induced Hilbert $\vnN(\Gamma)$-cochain complex $C_{(2)}^*(X',\rho)$ is chain homotopy equivalent to $C_{(2)}^*(X,\rho)$. Now we apply Theorem \ref{DILEQ} to conclude that \begin{align*} & b_p^{(2)}(C^*_{(2)}(X,\rho)) = b_p^{(2)}(C^*_{(2)}(X',\rho)), \\ & \alpha_p(C^*_{(2)}(X,\rho)) = \alpha_p(C^*_{(2)}(X',\rho)), \end{align*} as claimed. \\
For the next definition, assume additionally the following:
\begin{enumerate}[label=(\subscript{T}{{\arabic*}})] 
\item The representation $\rho \colon \Gamma \to \GL(V)$ is {\itshape unimodular}, i.e.\ satisfies $|\det \circ \rho| \equiv 1$. 
\item The pair $(M,\rho)$ is {\itshape topologically $L^2$-acyclic}, i.e.\ all twisted $L^2$-Betti numbers $b_{(2),p}^{Top}(M,\rho)$ vanish.
\item The {\itshape Whitehead group} $\Wh(\Gamma)$ (See \cite[Definition 5.3.1]{Ich}) of $\Gamma$ vanishes.
\item $\alpha^{Top}_p(M,\rho) > 0$ for each $p \in \mathbb N_0$.
\end{enumerate}
In this case, we can define the {\itshape twisted topological} $L^2$-torsion of the pair $(\Gamma \backslash M,\rho)$ as \begin{equation} T^{Top}_{(2)}(\Gamma \backslash M,\rho) \coloneqq T_ {(2)}(C_{(2)}^*(X,\rho)) \in \reals_{>0}, \end{equation} where $T_ {(2)}(C_{(2)}^*(X,\rho))$ is the $L^2$-torsion for finite type Hilbert $\vnN(\Gamma)$ cochain complexes introduced in Definition \ref{TorDef} (see \cite[Chapter 5.3]{Ich} for a detailed explanation). Just like for $b^{Top}_{(2),p}$ and $\alpha^{Top}$, under the assumptions $(1)-(4)$, one can show that $T^{Top}_{(2)}(M,\rho)$ is independent of both the choice of base on $C_{(2)}^*(X,\rho)$, as well as of the model space $X$ itself.

\section{Some known relations between different $L^2$-invariants}\label{RelL2}
 
\subsection{The induction principle} 
Let $\Gamma_0 < \Gamma$ be a subgroup and denote by $\rho_0$ the restriction of $\rho$ to $\Gamma_0$. Choose $X$ to be a finite $\Gamma_0$-CW complex and let $C^*(X,\rho_0)$ be the associated Hilbert $\vnN(\Gamma_0)$-cochain complex of finite type, whose construction was laid out in the previous section. We now explain how $C^*(X,\rho_0)$ gives rise to a free, finite Hilbert $\vnN(\Gamma)$-cochain complex via the {\itshape principle of induction}: Since $\Gamma_0 < \Gamma$, we can naturally regard the group ring $\mathbb C[\Gamma]$ as a {\itshape right} $\mathbb C[\Gamma_0]$-module. Hence, we can define the following cochain complex of free, finitely generated left $\mathbb C[\Gamma]$-modules \begin{equation}  C^*(X,\rho_0,\Gamma) \coloneqq \mathbb C[\Gamma] \otimes_{\mathbb C[\Gamma_0]} C^*(X,\rho_0). \end{equation} Similarly as before, the canonical inner product on the group ring $\mathbb C[\Gamma]$ turns $C^*(X,\rho_0,\Gamma)$ into a complex of inner product spaces, whose $L^2$-completion we denote by $C^*_{(2)}(X,\rho_0,\Gamma)$. It is a free, finite Hilbert $\vnN(\Gamma)$-cochain complex. It follows from \cite[Lemma 1.24]{Lueck:Book} that, for each $p \in \mathbb N$, we have \begin{align}&\label{bnoveq1} b_{(2)}^p\left(C_{(2)}^*(X,\rho_0,\Gamma)\right) = b_{(2)}^p\left(C_{(2)}^*(X,\rho_0)\right), \\ &\label{bnoveq2} \alpha_p\left( C^*_{(2)}(X,\rho_0,\Gamma)\right) = \alpha_p\left( C^*_{(2)}(X,\rho_0)\right). \end{align}
Now consider the principal $\Gamma$-bundle \begin{equation} Y \coloneqq \Gamma \times_{\Gamma_0} X. \end{equation} The $CW$-structure on $X$ then extends to a free, finite $\Gamma$-CW structure on $Y$. Just as above, form the twisted $L^2$-cochain complex $C_{(2)}^*(Y, \rho)$. It is proven in \cite[Lemma 1.1, Theorem 6.7,(5)]{Lück:twist} that there is an isomorphism of Hilbert $\mathcal N(\Gamma)$-cochain complexes \begin{equation} C^*_{(2)}(Y, \rho) \cong C^*_{(2)}(X,\rho_0,\Gamma). \end{equation} Using Theorem \ref{DILEQ}, along with Equations \ref{bnoveq1} and \ref{bnoveq2}, we thus obtain for each $p \in \mathbb N$ the equalities \begin{align} \label{bnoveq3} b_{(2)}^p\left(C_{(2)}^*(Y, \rho)\right) = b_{(2)}^p \left(C^*_{(2)}(X,\rho_0)\right), \\ 
\label{bnoveq4} \alpha_p \left(C_{(2)}^*(Y, \rho)\right) = \alpha_p \left(C_{(2)}^*(X,\rho_0)\right). \end{align}
Combined with the results from the previous subsection, we obtain: 
\begin{lemma}\label{indprinc} Let $\Gamma_0 < \Gamma$ be a group acting freely and properly discontinuously on a simply-connected space $M_0$, so that $\Gamma_0 \backslash M_0$ has the homotopy type of a finite CW-complex. Let $\rho \colon \Gamma \to \GL(V)$ be a finite-dimensional representation and denote by $\rho_0$ its restriction to $\Gamma_0$. Then the space $M \coloneqq \Gamma \times_{\Gamma_0} M_0$ is simply-connected, so that $\Gamma \backslash M$ has the homotopy type of a finite CW-complex. Moreover, for each $p \in \mathbb N_0$, we have
\begin{align*}& b_{(2),p}^{Top}(\Gamma \backslash M,\rho) = b_{(2),p}^{Top}(\Gamma_0 \backslash M_0,\rho_0), \\& \alpha_p^{Top}(\Gamma \backslash M,\rho) = \alpha_p^{Top}(\Gamma_0 \backslash M_0,\rho_0). \end{align*} \end{lemma}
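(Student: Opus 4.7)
The plan is to construct a finite $\Gamma$-CW model for $M$ by applying the induction functor $\Gamma \times_{\Gamma_0} (-)$ to a finite $\Gamma_0$-CW model of $M_0$, after which the desired equalities fall out directly from the preparatory discussion preceding the lemma.

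First I would verify the geometric claims about $M$. Topologically, $\Gamma \times_{\Gamma_0} M_0$ is a disjoint union of copies of $M_0$ indexed by the cosets $\Gamma/\Gamma_0$, with $\Gamma$ permuting these copies according to the coset action. Each component is homeomorphic to $M_0$ and therefore simply connected; the $\Gamma$-action is free and properly discontinuous since the $\Gamma_0$-action on $M_0$ is; and the canonical map induces a homeomorphism $\Gamma \backslash M \cong \Gamma_0 \backslash M_0$, which has the homotopy type of a finite CW-complex by hypothesis.

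Next, I would fix a finite $\Gamma_0$-CW complex $X$ together with a $\Gamma_0$-homotopy equivalence $f_0 \colon X \to M_0$. Applying the induction functor yields a finite $\Gamma$-CW complex $Y \coloneqq \Gamma \times_{\Gamma_0} X$, whose $\Gamma$-orbits of cells are in natural bijection with the $\Gamma_0$-orbits of cells of $X$. By functoriality, $f_0$ and a chosen $\Gamma_0$-homotopy inverse induce mutually $\Gamma$-homotopy inverse maps between $Y$ and $M$, so that $Y$ is a legitimate model for computing the topological $L^2$-invariants of $\Gamma \backslash M$ in the sense of Definition \ref{TopBetDef}.

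Using $Y$ in Definition \ref{TopBetDef}, one obtains $b_{(2),p}^{Top}(\Gamma \backslash M,\rho) = b_p^{(2)}(C_{(2)}^*(Y,\rho))$ and $\alpha_p^{Top}(\Gamma \backslash M,\rho) = \alpha_p(C_{(2)}^*(Y,\rho))$. Combined with equations \eqref{bnoveq3} and \eqref{bnoveq4}, which identify these invariants with $b_p^{(2)}(C_{(2)}^*(X,\rho_0))$ and $\alpha_p(C_{(2)}^*(X,\rho_0))$ respectively, the two asserted equalities follow at once. The only non-formal ingredient is the verification that the induced map $\unit_\Gamma \times_{\Gamma_0} f_0 \colon Y \to M$ is a $\Gamma$-homotopy equivalence, which is the main obstacle but is ultimately routine categorical bookkeeping on top of the already established comparison between $C^*_{(2)}(Y,\rho)$ and $C^*_{(2)}(X,\rho_0,\Gamma)$.
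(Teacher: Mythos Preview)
Your proposal is correct and follows essentially the same approach as the paper, which simply states that the lemma is obtained by combining the preceding discussion (equations \eqref{bnoveq1}--\eqref{bnoveq4} and the isomorphism $C^*_{(2)}(Y,\rho)\cong C^*_{(2)}(X,\rho_0,\Gamma)$) with the definition of the topological invariants. You are more explicit than the paper about why $Y=\Gamma\times_{\Gamma_0}X$ is a legitimate $\Gamma$-CW model for $M$, and you correctly flag that the phrase ``simply-connected'' in the statement should be read componentwise, but otherwise the argument is the same.
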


\subsection{A de Rham Theorem and an acyclic complex of determinant class}
Let $(M,g)$ be a smooth, simply-connected Riemannian manifold, and let $\Gamma \subset \Isom(M,g)$ be a uniform lattice. A $\Gamma$-CW structure $X$ on $M$ is called {\itshape admissible} if the intersection $X \cap \partial M$ is a $\Gamma$-CW structure for $\partial M$. In all our relevant instances, admissible $\Gamma$-CW structures always exist and can be constructed by lifting appropriate CW-structures from the quotient manifold $\Gamma \backslash M$. 
\begin{theorem}\label{DeRh} \cite[Theorem $6.3.5$]{Ich} Let $(M,g)$ be a simply-connected Riemannian manifold and let $\Gamma \subset \Isom^+(M,g)$ be a uniform lattice. Further, let $\rho: \Gamma \to \GL(V)$ be some finite-dimensional, complex representation and let $E^\rho \coloneqq M \times V \downarrow M$ be the associated flat, $\Gamma$-equivariant bundle over $M$. 
Let $\Omega_{(2)}^*(M,E^\rho,g,h)$ be the Hilbert $\vnN(\Gamma)$-cochain complex (with absolute boundary conditions), constructed with respect to some choice of $\Gamma$-equivariant Hermitian form $h$. Then, for any admissible $\Gamma$-CW structure $X$ on $M$, there is a chain homotopy equivalence of Hilbert $\vnN(\Gamma)$-cochain complexes
\begin{equation} \Omega_{(2)}^*(M,E^{\rho},g,h) \simeq C^*_{(2)}(X,\rho).\end{equation} 
\end{theorem}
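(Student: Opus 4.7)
The plan is to adapt the classical proof of the de Rham isomorphism, due to Whitney and Dodziuk, to the equivariant $L^2$-setting, exploiting the cocompactness of the $\Gamma$-action to upgrade local estimates to global operator bounds. The core idea is to build explicit chain maps in both directions between $\Omega_{(2)}^*(M, E^\rho, g, h)$ and $C^*_{(2)}(X, \rho)$ together with an explicit bounded chain homotopy witnessing one of the compositions.

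First I would construct the de Rham integration map $A \colon \Omega_c^*(M, E^\rho) \to C^*(X, \rho)$ by setting $A(\omega)(e) \coloneqq \int_e \omega$ on each oriented cell $e$; this makes sense because $E^\rho = M \times V$ is trivial, so the integrand is just a $V$-valued form, and $\Gamma$-equivariance follows directly from the equivariance of $X$ and the way the diagonal $\Gamma$-action is built into $C^*(X, \rho)$. Dually, using Whitney forms (or a smoothed variant so as to land in $\Omega^*(M, E^\rho)$), I would construct a $\Gamma$-equivariant interpolation map $W \colon C^*(X, \rho) \to \Omega^*(M, E^\rho)$ which is a chain map satisfying $A \circ W = \unit$ on cochains. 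The remaining identity $W \circ A \simeq \unit$ is a local, Dodziuk-style computation that produces a $\Gamma$-equivariant smooth chain homotopy $K$ with $WA - \unit = dK + Kd$.

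Second I would verify that $A$, $W$, and $K$ extend to bounded morphisms between the $L^2$-completions. Here the uniformity of the lattice is essential: a fundamental domain $\mathcal{F} \subseteq M$ meets only finitely many orbits of cells, so $A$ restricted to $\mathcal{F}$ is a bounded map between finite-dimensional pieces, while $W$ has uniformly bounded support in each cell orbit; the chain homotopy $K$ has uniformly bounded local norm as well. Assembling $\Gamma$-equivariantly then yields bounded $\vnN(\Gamma)$-morphisms between the $L^2$ complexes. Admissibility of $X$ enters here as well: because $X \cap \partial M$ is a $\Gamma$-CW structure on $\partial M$, one can arrange the cellwise constructions to respect the tangential/normal splitting of forms along $\partial M$, so that $W$ lands in the domain of the minimal closure of $d$ with absolute boundary conditions, and $K$ intertwines these closures.

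The principal difficulty I foresee is precisely this last point: ensuring that the Whitney-type interpolation and the Dodziuk chain homotopy are compatible with the absolute boundary conditions encoded in the minimal closure of $d^*$ used to define $\Omega_{(2)}^*(M, E^\rho)$. On a closed manifold no such issue arises, but here one must either (i) construct $W$ and $K$ so that their outputs already satisfy $\vec{n}\omega = 0$ where relevant, by choosing Whitney forms tangentially adapted along $\partial M$, or (ii) bypass the boundary issue entirely by doubling $\overline{\Gamma \backslash M}$ along its boundary into a closed manifold, running the equivariant de Rham argument there, and descending via the $\mathbb{Z}/2$-equivariant decomposition into symmetric (absolute) and antisymmetric (relative) parts. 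Either route eventually yields the desired chain homotopy equivalence of Hilbert $\vnN(\Gamma)$-cochain complexes.
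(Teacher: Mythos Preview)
The paper does not actually prove this theorem; it is quoted verbatim from the author's earlier work \cite[Theorem~6.3.5]{Ich} and used as a black box, so there is no proof in the present paper to compare against.

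That said, your outline is the standard Dodziuk--Whitney strategy and is essentially what one expects the cited proof to do: build the $\Gamma$-equivariant integration map $A$ and Whitney interpolation $W$, produce the Dodziuk homotopy $K$ locally, and then use cocompactness of $\Gamma$ to turn the local norm bounds into global operator bounds so that everything extends to the $L^2$-completions. You have also correctly isolated the one genuinely nontrivial point, namely compatibility of $W$ and $K$ with the absolute boundary conditions defining the minimal closure of $d$. Your option (i) is what is typically done in practice: one refines the CW-structure near $\partial M$ (admissibility guarantees this is possible) so that the Whitney forms attached to boundary cells are tangential, and then checks the homotopy preserves the relevant domain. Your option (ii), the doubling trick, is cleaner conceptually but requires care because the doubled metric is only Lipschitz across $\partial M$ unless $g$ is a product near the boundary, so one would first have to deform $g$ equivariantly and argue that the resulting $L^2$-complexes are still chain homotopy equivalent. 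Either route works; neither is a gap in your plan.
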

Applying Theorem \ref{DILEQ}, we deduce: 
\begin{corollary}\label{DeRhCor} Let $(M,g)$ be a simply-connected Riemannian manifold, let $\Gamma \subset \Isom^+(M,g)$ be a uniform lattice and $\rho \colon \Gamma \to \GL(V)$ a finite-dimensional complex representation. Then 
\begin{align*}& b_{(2),p}^{An}(\Gamma \backslash M,\rho) = b_{(2),p}^{Top}(\Gamma \backslash M,\rho) \\& \alpha_p^{An}(\Gamma \backslash M,\rho) = \alpha_p^{Top}(\Gamma \backslash M, \rho). \end{align*} \end{corollary}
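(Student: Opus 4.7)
The plan is to deduce both equalities as direct consequences of Theorem \ref{DeRh} via the dilatational equivalence of Theorem \ref{DILEQ}. Since $\Gamma$ is a uniform lattice, the quotient $\Gamma \backslash M$ is compact, so, as noted before Theorem \ref{DeRh}, one may lift a CW-decomposition of $\Gamma \backslash M$ (respecting its boundary, if any) to an admissible $\Gamma$-CW structure $X$ on $M$, which I fix.

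With $X$ in hand, Theorem \ref{DeRh} supplies a chain homotopy equivalence of Hilbert $\vnN(\Gamma)$-cochain complexes
\begin{equation*}
\Omega_{(2)}^*(M,E^{\rho},g,h) \;\simeq\; C^*_{(2)}(X,\rho).
\end{equation*}
To invoke Theorem \ref{DILEQ} I must verify that both sides are $\zeta$-regular Fredholm complexes. For the combinatorial complex this is automatic: $C^*_{(2)}(X,\rho)$ is of finite type with bounded differentials, so each Laplacian is a bounded self-adjoint endomorphism of a finitely generated Hilbert $\vnN(\Gamma)$-module; the heat trace $\vntr(e^{-t\Delta_p^\perp})$ is consequently smooth in $t$ with exponential decay, making the associated zeta integral meromorphic on $\ceals$ and regular at $0$. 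For the analytic complex, Fredholmness follows from the trace class property of each $e^{-t\Delta_p}$ recalled in Section \ref{SecDeRha}, while $\zeta$-regularity is assembled from the short-time asymptotic expansion of Theorem \ref{asymptbdgeom} together with the uniform long-time heat kernel bounds of Theorem \ref{heat} (applied with $N = M$).

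With these hypotheses verified, Theorem \ref{DILEQ} delivers the equalities
\begin{align*}
b_{(2),p}^{An}(\Gamma \backslash M,\rho) &= b_p^{(2)}(\Omega_{(2)}^*(M,E^{\rho},g,h)) = b_p^{(2)}(C^*_{(2)}(X,\rho)) = b_{(2),p}^{Top}(\Gamma \backslash M,\rho), \\
\alpha_p^{An}(\Gamma \backslash M,\rho) &= \alpha_p(\Omega_{(2)}^*(M,E^{\rho},g,h)) = \alpha_p(C^*_{(2)}(X,\rho)) = \alpha_p^{Top}(\Gamma \backslash M,\rho)
\end{align*}
for every $p \in \mathbb N_0$, where the outer identifications are the definitional ones (Definition \ref{TopBetDef} on the topological side, and the de Rham complex on the analytic side). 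No genuine obstacle remains at this stage: the substantive analytic input, namely the continuous-to-combinatorial comparison of Theorem \ref{DeRh} and the heat kernel behaviour on $\Omega_{(2)}^*(M,E^\rho)$, has already been established. The only mild subtlety is the verification of $\zeta$-regularity on both sides, which in the uniform lattice setting is standard.
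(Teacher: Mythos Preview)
Your approach is essentially the same as the paper's: invoke Theorem \ref{DeRh} to get a chain homotopy equivalence, then apply Theorem \ref{DILEQ}. The paper compresses this to a single line (``Applying Theorem \ref{DILEQ}, we deduce:'') and does not pause to re-verify the hypotheses you spell out.

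One small correction: your justification of $\zeta$-regularity on the analytic side is slightly off. Theorem \ref{heat}, even with $N=M$, contributes nothing here---part (1) becomes the trivial estimate $0 \leq 0$, and the uniform bound of part (2) does not bear on the behaviour of $\int_0^1 t^{s-1}\vntr(e^{-t\Delta_p^\perp})\,dt$. What you actually need is only the short-time asymptotic expansion of Theorem \ref{asymptbdgeom}, together with the observation that $\vntr(e^{-t\Delta_p^\perp}) = \vntr(e^{-t\Delta_p}) - b_p^{(2)}$ (the subtracted constant being absorbed into the $i=n$ term of the expansion). In fact the $\zeta$-regularity hypothesis in Theorem \ref{DILEQ} is stronger than what is needed for the Betti number and Novikov--Shubin conclusions: the cited result of Gromov--Shubin only requires Fredholmness, which you have already noted holds on both sides. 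So this portion of your argument, while not wrong, is doing more work than necessary.
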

Let $(M,g)$ be a compact, simply-connected Riemannian manifold (possibly with boundary). Given a smooth, positive function $f \colon M \to \mathbb R_{>0}$. and some $k \in \mathbb N$, consider the product manifold $M \times \mathbb R^k$, equipped with the warped product metric \begin{equation*} g' = g + f \cdot dx^2, \end{equation*} where $dx^2$ is the standard Euclidean metric on $\mathbb R^k$. Observe that there is a uniform lattice $\Gamma < \Isom(M \times \mathbb R^k, g')$ isomorphic to $\mathbb Z^k$, whose action is on $M \times \mathbb R^k$ is trivial on the first, and given by the standard coordinate translations on the second factor. Consequently, the quotient space $\Gamma \backslash (M \times \mathbb R^k)$ is homeomorphic to $M \times T^k$, with $T^k$ denoting the $k$-dimensional torus. For the flat bundle $E^\rho \downarrow M \times \mathbb R^k$ associated with a representation $\rho \colon \Gamma \to \GL(V)$, choose some $\Gamma$-equivariant Hermitian form $h$. Then we may conclude as follows:
\begin{proposition}\label{novpos1} With the notation from above, it holds that for {\bfseries any} finite-dimensional representation $\rho \colon \Gamma \to \GL(V)$, the Hilbert $\vnN(\Gamma)$-cochain complex
$\Omega_{(2)}^*(M \times \mathbb R^k,g',h,E^\rho)$ is acyclic and has positive Novikov-Shubin invariants. More precisely, we have for all $0 \leq p \leq \dim(M) + k$ \begin{align*}
& b^{An}_{(2),p}(M \times T^k,\rho) = b^{Top}_{(2),p}(M \times T^k,\rho) = 0, \\& \alpha^{An}_{p}(M \times T^k,\rho) = \alpha^{Top}_p(M \times T^k,\rho) > 0. \end{align*} \end{proposition}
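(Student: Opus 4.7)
The plan is to first apply Corollary \ref{DeRhCor} to reduce the two analytic assertions to their topological analogues. The hypotheses hold: $\Gamma = \mathbb Z^k$ embeds as a uniform lattice of orientation-preserving isometries of $(M \times \mathbb R^k, g')$, with compact quotient $M \times T^k$. It thus suffices to prove $b^{Top}_{(2),p}(M \times T^k, \rho) = 0$ and $\alpha^{Top}_p(M \times T^k, \rho) > 0$ for every $p$ and every finite-dimensional $\rho$.

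I would choose a product $\Gamma$-CW structure $X = X_M \times \widetilde{X}_{T^k}$ on $M \times \mathbb R^k$, with $X_M$ a finite CW structure on $M$ (on which $\Gamma$ acts trivially) and $\widetilde{X}_{T^k}$ the standard cubical $\mathbb Z^k$-CW structure on $\mathbb R^k$. The twisted cochain complex then splits as a tensor product
\begin{equation*}
C^*_{(2)}(X, \rho) \cong C^*(X_M, \mathbb C) \otimes_{\mathbb C} C^*_{(2)}(\widetilde{X}_{T^k}, \rho),
\end{equation*}
since the $\Gamma$-action is concentrated on the torus factor. With the Koszul sign convention the combinatorial Laplacian decomposes as $\Delta^{\mathrm{tot}} = \Delta_M \otimes 1 + 1 \otimes \Delta_\rho$. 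Because $\Delta_M$ is a self-adjoint operator on a finite-dimensional complex, it has a spectral gap at $0$ off its (finite-dimensional) kernel; hence $\ker \Delta^{\mathrm{tot}} = \ker \Delta_M \otimes \ker \Delta_\rho$, and the spectral density of $\Delta^{\mathrm{tot}}$ near $0$ is controlled by that of $\Delta_\rho$. This reduces the claim to the case $M = \mathrm{pt}$: showing that $C^*_{(2)}(\widetilde{X}_{T^k}, \rho)$ is $L^2$-acyclic with positive Novikov-Shubin invariants.

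For the reduced problem, choose a $\mathbb C[\Gamma]$-basis $E \otimes B$ as in Section \ref{CombTopSec}. The isometric $\Gamma$-equivariant identification $\gamma.(e \otimes b) \mapsto \gamma \otimes (e, b)$, combined with the Fourier isomorphism $l^2(\mathbb Z^k) \cong L^2(T^k)$, realizes each boundary operator as pointwise multiplication by a matrix-valued trigonometric polynomial on $T^k$. A direct calculation on the cubical structure yields the Koszul-type operators $u_i(z) \coloneqq 1 - z_i \rho(e_i)^{-1}$ on $V$ (up to sign conventions); thus $C^*_{(2)}(\widetilde{X}_{T^k}, \rho)$ is the direct integral, over $z \in T^k$, of the finite-dimensional Koszul complexes $K^*(V; u_1(z), \dots, u_k(z))$.

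At generic $z \in T^k$ the commuting operators $u_i(z)$ are invertible on $V$, so the fiber Koszul complex is exact. Failure of exactness requires $(z_1, \dots, z_k)$ to be a simultaneous eigenvalue tuple of $(\rho(e_1), \dots, \rho(e_k))$ lying on $T^k$; the set of such tuples is finite (bounded by $\dim V$), hence of measure zero. Integrating over $T^k$ yields $b^{(2)}_p(C^*_{(2)}(\widetilde{X}_{T^k}, \rho)) = 0$ for every $p$. For the Novikov-Shubin invariants, near each singular point $z_0$ the determinant of the fiber Laplacian $\Delta_\rho(z)$ is a nonzero trigonometric polynomial vanishing to finite polynomial order, producing an estimate $F_p(\Delta_\rho, \lambda) = O(\lambda^\beta)$ for some $\beta > 0$ and hence $\alpha_p > 0$. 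The main obstacle is this final step: while the order of vanishing is automatically finite, extracting a uniform positive exponent $\beta$ requires careful bookkeeping of the simultaneous Jordan decomposition of the commuting matrices $\rho(e_i)$.
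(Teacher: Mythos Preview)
Your reduction via Corollary \ref{DeRhCor} matches the paper's first step. After that, the two arguments diverge.

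\textbf{Betti numbers.} Your Fourier/Koszul argument is correct and complete: the tensor splitting $C^*(X_M,\mathbb C)\otimes C^*_{(2)}(\widetilde X_{T^k},\rho)$ is valid because $\Gamma$ acts trivially on $X_M$; the fibre Koszul complex $K^*(V;u_1(z),\dots,u_k(z))$ is acyclic as soon as one $u_i(z)$ is invertible; and the set of simultaneous eigenvalue tuples on $T^k$ is finite, hence of measure zero. The paper instead invokes \cite[Theorem 7.7]{Lück:twist} to get $b^{Top}_{(2),p}(M\times T^k,\rho)=\dim(\rho)\cdot b^{Top}_{(2),p}(M\times T^k,\unit)$ and then the self-covering multiplicativity of ordinary $L^2$-Betti numbers. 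Your route is more self-contained; the paper's is shorter.

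\textbf{Novikov--Shubin invariants.} Here your proposal has the gap you yourself flag, and it is not just bookkeeping. Knowing that $\det\Delta_\rho(z)$ is a nonzero real-analytic function with finitely many zeros on $T^k$ does not by itself yield $F_p(\lambda)=O(\lambda^\beta)$: one needs a quantitative sublevel-set estimate of the form $\mathrm{vol}\{z:\det\Delta_\rho(z)\le\epsilon\}=O(\epsilon^\gamma)$, which for $k\ge 2$ requires a \L ojasiewicz-type inequality (or, equivalently, a resolution-of-singularities argument). This is precisely the nontrivial content of \cite[Theorem 1.2]{Lueck:Novi}, which asserts that every matrix over $\mathbb C[\mathbb Z^k]$ has positive Novikov--Shubin invariant. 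The paper simply cites that theorem --- noting that the boundary maps of $C^*_{(2)}(X,\rho)$ are matrices over $\mathbb C[\mathbb Z^k]$ --- and you should too, rather than attempting to reprove it in this special case.
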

\begin{proof} The left-hand equalities follow directly from Corollary \ref{DeRhCor}. 
To show the right-hand equalities, the isomorphism $\Gamma \cong \mathbb Z^k$ will be of central importance. Firstly, \cite[Theorem 7.7]{Lück:twist} implies that \begin{equation} b_{(2),p}^{Top}(M \times T^k, \rho)  = \dim_{\mathbb C}(\rho) \cdot b_{(2),p}^{Top}(M \times T^k, \unit), \end{equation} where $\unit: \Gamma \to \ceals^\times$ denotes the trivial representation. From the multiplicativity of ordinary $L^2$-Betti numbers under coverings, see \cite[Example 1.37]{Lueck:Book}, it follows that any compact manifold $X$ that admits non-trivial self-coverings satisfies $b_{(2),p}^{Top}(X,\unit) = 0$ for all $p \geq 0$. Since $M \times T^k$ clearly admits non-trivial self-coverings, we obtain that $b_{(2),p}^{Top}(M \times T^k,\unit) = 0$, from which we conclude that $ b^{An}_{(2),p}(M \times T^k,\rho) = 0$ as desired. 
Secondly, observe that for any $\Gamma$-$CW$-structure $X$ on $M \times \mathbb R^k$, the boundary operators of $C_{(2)}^*(X,\rho)$ are by definition matrices over $\mathbb C[\Gamma] \cong \mathbb C[\mathbb Z^d]$ (acting by right-multiplication). It is shown in \cite[Theorem 1.2]{Lueck:Novi}, that any such matrix has positive Novikov-Shubin invariant. Therefore \begin{equation} \alpha_p^{Top}(M \times T^k,\rho) = \alpha_p(C_{(2)}^*(X,\rho)) > 0 \end{equation} for each $0 \leq p \leq \dim(M) + k$, finishing the proof.
\end{proof}

\subsection{The Cheeger-M\"uller Theorem for uniform lattices}

A subject that has undergone much investigation in the last three decades, which is also a key player of this paper, is the {\itshape $L^2$-torsion anomaly} \begin{equation}\log \left( \frac{T_{(2)}^{An}(M,\rho)}{T^{Top}_{(2)}(M,\rho)} \right) \in \reals \end{equation} constructed from a compact manifold $M$ and a representation $\rho$ of its fundamental group. For completion, we should mention that, although not clear from our definitions, this anomaly can be defined even if the relevant combinatorial and de Rham cochain complexes under inspection are not acyclic and of determinant class. \\
Although the main manifold under inspection is itself not compact, it still admits an exhaustion by compact submanifolds with boundary, for which the author has been able to provide an adequate description of the $L^2$-torsion anomaly in \cite{Ich2}. \\
We won't describe it in full detail, but instead focus on the property that is most relevant for our purposes: Up to a factor given by the dimension of the representation, the $L^2$-torsion anomaly is the same for all representations that are ``admissible'' in the sense that they satisfy the conditions listed in the theorem below. 

\begin{theorem}\label{CheMulComp}\cite[Theorem 1.2]{Ich2} Let $(M,g)$ be a simply-connected Riemannian manifold and let $\Gamma \subset \Isom^+(M,g)$ be a uniform lattice. For $i=1,2$, let $\rho_i: \Gamma \to \GL(V)$ be some finite-dimensional, complex representation and let $E^{\rho_i} \coloneqq M \times V \downarrow M$ be the associated flat, $\Gamma$-equivariant bundle over $M$. 
Let $\Omega_{(2)}^*(M,E^{\rho_i},g,h_i)$ be the Hilbert $\vnN(\Gamma)$-cochain complex (with absolute boundary conditions), constructed with respect to some choice of $\Gamma$-equivariant Hermitian form $h_i$. Suppose that $h_i$ is unimodular as in \cite[Definition 4.2]{Ich2}. If both complexes $\Omega_{(2)}^*(M,E^{\rho_i},g,h_i)$ and $\Omega_{(2)}^*(\partial M,E^{\rho_i}_{\partial M},g,h_i)$ are of determinant class and $\Omega_{(2)}^*(M,E^{\rho_i},g,h_i)$ is $L^2$-acyclic, then 
\begin{equation} \dim(\rho_2) \cdot \log \left( \frac{T_{(2)}^{An}(\Gamma \backslash M,\rho_1,g,h_1)}{T^{Top}_{(2)}(\Gamma \backslash M,\rho_1)} \right) = \dim(\rho_1) \cdot \log \left( \frac{T_{(2)}^{An}(\Gamma \backslash M,\rho_2,g,h_2)}{T^{Top}_{(2)}(\Gamma \backslash M,\rho_2)} \right). \end{equation} \end{theorem}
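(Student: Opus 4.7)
The plan is to establish a local Bismut-Zhang-type formula for the $L^2$-torsion anomaly and then observe that, under the unimodularity hypothesis, its integrand factors as $\dim(\rho)$ times a representation-independent density. The identity in the theorem then follows by cross-multiplication.

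First, I would derive for each admissible pair $(\rho_i, h_i)$ a local expression of the form
\begin{equation*} \log\left(\frac{T^{An}_{(2)}(\Gamma \backslash M, \rho_i, g, h_i)}{T^{Top}_{(2)}(\Gamma \backslash M, \rho_i)}\right) = \int_{\Gamma \backslash M} \omega(g, \rho_i, h_i) + \int_{\Gamma \backslash \partial M} \eta(g, \rho_i, h_i), \end{equation*}
where $\omega$ and $\eta$ are Mathai-Quillen-type characteristic densities, augmented by a boundary contribution reflecting the absolute boundary conditions. Concretely, one lifts a Morse function from the compact quotient $\Gamma \backslash M$ to $M$, performs Witten deformation on the twisted $L^2$-de Rham complex $\Omega^*_{(2)}(M, E^{\rho_i}, g, h_i)$, and compares the deformed Laplacian against the combinatorial Laplacian of the associated $\Gamma$-equivariant Thom-Smale complex. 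The resulting difference is analysed via heat-kernel asymptotics in the spirit of Burghelea-Friedlander-Kappeler and Zhang, with the modifications needed for nontrivial boundary developed in the author's previous paper \cite{Ich2}.

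The second step is to observe that, when $\rho_i$ is unimodular and $h_i$ is a unimodular Hermitian metric in the sense of \cite[Definition 4.2]{Ich2}, the Kamber-Tondeur one-form $\tr(h_i^{-1} \nabla h_i)$ vanishes identically. Consequently the fiberwise traces entering $\omega$ and $\eta$ collapse to $\dim(\rho_i)$ times densities built solely from $g$, that is,
\begin{equation*} \omega(g, \rho_i, h_i) = \dim(\rho_i) \cdot \omega_0(g), \qquad \eta(g, \rho_i, h_i) = \dim(\rho_i) \cdot \eta_0(g), \end{equation*}
for universal local forms $\omega_0, \eta_0$ on $\Gamma \backslash M$ and $\Gamma \backslash \partial M$ respectively. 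Substituting into the display above and cross-multiplying by $\dim(\rho_1)\dim(\rho_2)$ yields the claimed proportionality relation.

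The main obstacle is the first step: proving the $L^2$ Bismut-Zhang formula on a manifold with boundary requires careful small-time analysis of the Witten-deformed heat kernel, in particular ensuring that the relevant operators remain of $\Gamma$-trace class along the entire deformation, uniform control over the convergence of Witten-deformed eigenforms to their Morse-theoretic representatives, and the precise identification of the boundary contribution $\eta$. Fortunately, exactly this analysis is carried out in \cite[Theorem 1.2]{Ich2}, so in practice the theorem here is obtained by invoking that result and specializing to the two pairs $(\rho_1, h_1)$ and $(\rho_2, h_2)$.
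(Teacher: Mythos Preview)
The paper does not prove this statement at all: Theorem~\ref{CheMulComp} is simply quoted from \cite[Theorem 1.2]{Ich2} with no argument given, and is then applied as a black box to derive Theorem~\ref{TorQuot}. Your proposal correctly identifies this at the end, and the sketch you give of the underlying mechanism (Witten deformation, local Bismut--Zhang density, vanishing of the Kamber--Tondeur form under unimodularity) is a reasonable outline of what goes on in the cited reference, but it goes well beyond what the present paper actually does.
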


Finally, although there are a lot of manifolds $(M,g)$ that possess no acyclic representations $\rho$ (for example if $\chi(M) \neq 0$), we should remark that the other critical assumption in the above theorem, the determinant class condition, is conjectured to always be satisfied for all finite-dimensional acyclic representations $\rho$.

\section{The flat, canonical $\rho$-bundle over $\hyp^n$}\label{FLATRHO}

For $n \in \mathbb N$ odd, we set $G \coloneqq SO_0(n,1)$ and let $K \coloneqq SO(n) \subseteq G$. Then $K$ is a maximal compact subgroup of $G$ and we can identify the quotient $G/K$ with the $n$-dimensional hyperbolic space $\hyp^n$. Conversely, we can identify $G$ with $\Isom_0(\hyp^n)$, the identity component of the hyperbolic isometry group. \\ Let $\Gamma \subseteq G$ be a non-uniform lattice. Here, as everywhere else in this paper, lattices are always assumed to be {\itshape torsion-free} (this way, the induced quotient map $\hyp^n \to \Gamma \backslash \hyp^n$ is an honest covering projection). Throughout, $g$ will denote the hyperbolic metric on $\hyp^n$. It is well-known \cite[Chapter 4]{Petronio:hyp} that, associated with $\Gamma$, we then find a totally ordered set \begin{equation}\label{thick} \{M_R \subseteq \hyp^n: R \in [0,\infty) \} \end{equation} of complete $\Gamma$-invariant submanifolds of $\hyp^n$ (with $M_R \subset M_{R'}$ if $R < R'$ ), such that, additionally, \begin{enumerate}
\item $\hyp^n = \bigcup_{R > 0} M_R$,
\item $\Gamma$ acts cocompactly on each $M_R$,
\item the complete submanifold \begin{equation}\label{thin} C_R \coloneqq \clos(\hyp^n \setminus M_R) \end{equation} is also $\Gamma$-invariant. Moreover, there exists an integer $k \in \mathbb N$ and, for each $1 \leq j \leq k$, complete, connected submanifolds $C_0^j$ of $C_0$ with $C_R^j \coloneqq C_R \cap C_0^j$ complete, connected submanifolds of $C_R$ for each $R \geq 0$, such that the following holds: \begin{enumerate}
\item $C_0^j \cong [0,\infty) \times \reals^{n-1}$ under a diffeomorphism that identifies $C_R^j$ with $[R,\infty) \times \reals^{n-1}$. Furthermore, under the aforementioned identification, the hyperbolic metric restricted to $C_0^j$ is of the form \begin{equation}\label{warpprod} dt^2 + e^{-2t}dx^2, \end{equation} where $dt^2$ is the Euclidean metric on $[0,\infty)$ and $dx^2$ the Euclidean metric on $\reals^{n-1}$.
\item For each $R \geq 0$, we have an equality of stabilizer subgroups $\Gamma_0^j \coloneqq \Gamma_{C^j_0} = \Gamma_{C^j_R} < \Gamma$. The action of $\Gamma_0^j$ on $C_0^j\cong [0,\infty) \times \reals^{n-1}$ is the product of the trivial action  on the first factor $[0,\infty)$ and a cocompact, free, properly discontinuous action by Euclidean isometries on the second factor $\reals^{n-1}$ of $C_0^j$. In particular, $\Gamma_0^j$  is isomorphic to $\mathbb Z^{n-1}$.
\item For each $R \geq 0$, we have an isometric diffeomorphism of principal $\Gamma$-bundles \begin{equation} C_R \cong \coprod_{j=1}^k \Gamma \times_{\Gamma_0^j} C_R^j. \end{equation}
\end{enumerate}
\end{enumerate}
\begin{remark} In fact, the above decomposition of $\hyp^n$ into $\Gamma$-invariant parts still holds true if $\Gamma$ is uniform (i.e.\ $\Gamma$ acts cocompactly on $\hyp^n$) for trivial reasons. Namely, in this instance, we can simply define $M_R \coloneqq \hyp^n$ for all $R \geq 0$. \end{remark} 
\begin{example} Below left, we have sketched the decomposition of $\hyp^2$ as defined above, along with a fundamental domain for the lattice \[ \Gamma \coloneqq \left< \begin{pmatrix} 1 & 0 \\ 2 & 1 \end{pmatrix}, \begin{pmatrix} 1 & 2 \\ 0 & 1\end{pmatrix} \right> < \Isom^+(\hyp^2), \] where the action on $\hyp^2$ is given by M\"obius transformations. In this instance, we have $k = 3$. The colors indicate which horoballs are identified in the quotient space $\Gamma \backslash \hyp^2$, sketched below right, which is homeomorphic to a three-holed sphere. \end{example}
\begin{align*}
\begin{tikzpicture}[scale=2.5]
\filldraw[fill=cyan, draw=black](0,0.23) circle [radius=0.23];
\filldraw[fill=lime, draw=black](0.2,0.04) circle [radius=0.04];
\filldraw[fill=cyan,draw=black](0.4,0.04) circle [radius=0.03];
\filldraw[fill=pink, draw=black](0.5,0.07) circle [radius=0.07];
\filldraw[fill=lime, draw=black](0.6,0.04) circle [radius=0.03];
\filldraw[fill=cyan,draw=black](0.8,0.04) circle [radius=0.04];
\filldraw[fill=lime, draw=black](1,0.23) circle [radius=0.23];
\filldraw[fill=cyan,draw=black](1.2,0.04) circle [radius=0.04];
\filldraw[fill=lime, draw=black](1.4,0.04) circle [radius=0.03];
\filldraw[fill=pink, draw=black](1.5,0.07) circle [radius=0.07];
\filldraw[fill=cyan,draw=black](1.6,0.04) circle [radius=0.03];
\filldraw[fill=lime, draw=black](1.8,0.04) circle [radius=0.04];
\filldraw[fill=cyan,draw=black](2,0.23) circle [radius=0.23];
\filldraw[fill=lime, draw=black](-1,0.23) circle [radius=0.23];
\filldraw[fill=cyan,draw=black](-0.8,0.04) circle [radius=0.04];
\filldraw[fill=lime, draw=black](-0.6,0.04) circle [radius=0.03];
\filldraw[fill=pink, draw=black](-0.5,0.07) circle [radius=0.07];
\filldraw[fill=cyan,draw=black](-0.4,0.04) circle [radius=0.03];
\filldraw[fill=lime, draw=black](-0.2,0.04) circle [radius=0.04];
\node[below] (A) at (0,0) {$0$};
\node[below] (B) at (1,0) {$1$};
\node[above] (C) at (0.5,3){$\infty$};
\node[below] (D) at (2,0) {$2$};
\node[below](E) at (0.5,0) {$\frac{1}{2}$};
\node[below](F) at (-0.5,0) {$-\frac{1}{2}$};
\node[below](G) at (-1,0) {$-1$};
\node[below](H) at (1.5,0) {$\frac{3}{2}$};
\node[right](J) at (0.5,1.1) {$\xi +1$};
\node[above](K) at (1.695,-0.02) {$\mydots$};
\node[above](K) at (1.305,-0.02) {$\mydots$};
\node[above](K) at (-1.305,-0.02) {$\mydots$};
\node[above] at (2.305,-0.02) {$\mydots$};
\node[above](K) at (0.695,-0.02) {$\mydots$};
\node[above](K) at (0.305,-0.02) {$\mydots$};
\node[above](K) at (-0.305,-0.02) {$\mydots$};
\node[above](K) at (-0.695,-0.02) {$\mydots$};
\node at (-.12,.2) {$C^1_0$};
\node at (1.12,.2) {$C^2_0$};
\node at (0,1.5) {$\hyp^2$};
\shade[top color=white, bottom color =pink] (-1.5,2.2) rectangle (2.5,3.5);
\node at (0,2.7) {$C^0_0$};
\node at (0.5,3.7) {$\infty$};
\draw[black](-1.5,2.2) --(2.5,2.2);
\draw (-1.5,0) -- (2.5,0);
\draw(0,0)  arc[radius = 1, start angle= 180, end angle= 60] node[right](K) {$\xi+2$};
\draw (1,0) arc[radius =1, start angle =0,end angle =120] node[left](I) {$\xi$};
\draw(0,0)[middlearrow={<<}] arc[radius =1, start angle = 0, end angle =60];
\draw(1,0)[middlearrow={<<<}] arc[radius =1, start angle = 180, end angle = 120];
\draw[middlearrow={<<}](0,0) arc[radius = 0.333, start angle = 180, end angle = 60];
\draw[middlearrow={<<<}](1,0) arc[radius = 0.333, start angle = 0, end angle = 120];
\draw(0.5,0.31) -- (0.5,3.5);
\draw[middlearrow={>[scale=2.0]}](1.5,0.866) -- (1.5,3.5);
\draw[middlearrow={>[scale=2.0]}](-0.5,0.866) -- (-0.5,3.5);
\end{tikzpicture}
\hspace{.2cm}
\begin{tikzpicture}[scale = 2]
\draw (-2,0) arc[radius = 2, start angle=140, end angle=40]; 
\draw (-2.2,0) arc[radius = 8,start angle=325, end angle = 355];     
\draw (1.3,0) arc[radius = 8,start angle=215, end angle = 185];
\draw[bend left] (0.13,2.51) to  (-1.03,2.51);    
\draw[dashed, bend right] (0.13,2.51) to  (-1.03,2.51);
\draw[middlearrow={>[scale=2.0]}] (-0.45,2.365) -- (-0.45,4) node[above] {$\infty$};
\draw[middlearrow={<<}] (-2.1,0) node[below] {$0$} --(-0.45,2.365);
\draw[middlearrow={<<<}] (1.2,0) node[below] {$1$} -- (-0.45,2.365);
\node[below] at (-0.45,2.365) {$\xi$};
\node at (-0.45,1.5) {$\Gamma \backslash \hyp^2$};
\node[above right] at (-0.45,2.675) {$\xi + 1$};
\end{tikzpicture}
\end{align*}
For each $R \geq 0$ and each $1 \leq j \leq k$, we further define the complete submanifolds \begin{align} \label{torus} T_R \coloneqq C_R \cap M_{R+1}, \\ T_R^j \coloneqq C_R^j \cap T_R.  \end{align} From the above, it follows that each $T_R$ is $\Gamma$-invariant, and that the stabilizer of $T_R^j$ inside $\Gamma$ equals $\Gamma_0^j$.
Moreover, we can identify $T_R^j$ with $[R,R+1] \times \reals^{n-1}$ and the hyperbolic metric correspondingly with $dt^2 + e^{-2t}dx^2$. Finally, it follows that also $T_R$ is a principal $\Gamma$-bundle, isometrically diffeomorphic to  $\coprod_{j=1}^k \Gamma \times_{\Gamma_0^j} T_R^j$.
\begin{align*}
\begin{tikzpicture}[scale=0.4]
\fill[pink](-12,0) rectangle (12,4);
\shade[bottom color=pink, top color=white] (-12,4) rectangle (12,6);
\draw(0,0) node[below right]{$0$} -- (0,9);
\draw(-12,0) -- (12,0);
\draw[dashed](-12,4) -- (0,4) node[below right]{$R$} -- (12,4);
\draw[dashed](-12,6) -- (0,6) node[below right]{$R+1$} -- (12,6);
\node[above](A) at (-5.75,2) {$M_{R+1} \cap C_0^0$};
\node[above](B) at (-5.75,4.5) {$T_R^0$}; 
\node[above](C) at (-5.75,7) {$C_R^0$};
\draw[middlearrow={>[scale=2.0]}] (3,0) -- (3,9);
\draw[middlearrow={>[scale=2.0]}] (9,0) -- (9,9);
\draw[middlearrow={>[scale=2.0]}] (-3,0) -- (-3,9);
\draw[middlearrow={>[scale=2.0]}] (-9,0) -- (-9,9);
\end{tikzpicture}
\end{align*}
Consider an irreducible representation $\rho: G \to \GL(V)$ of $G$ on some complex, finite-dimensional vector space $V$. Observe that $\rho$ gives rise to a diagonal action of $G$ on the product $\hyp^n \times V$. Evidently, this determines an action on the flat vector bundle $\hyp^n \times V \downarrow \hyp^n$ by flat bundle isomorphisms, so that the projection map becomes $G$-equivariant (with respect to the $G$-actions on the base space and the total space). By \cite[Lemma 3.1]{Matsushima:Metric}, there exists a distinguished {\itshape Hermitian metric} $h_\rho \in C^\infty(\hyp^n,\GL(V,V^*))$, which satisfies \begin{equation} \langle v,w \rangle_{h_\rho(p)} = \langle \rho(\gamma) \cdot v, \rho(\gamma) \cdot w \rangle_{h_\rho(\gamma.p)}.  \end{equation} The resulting Hermitian bundle $(\hyp^n \times V,h_\rho) \downarrow \hyp^n$ is called the {\bfseries flat, canonical $\rho$-bundle} over $\hyp^n$ and is briefly denoted by $E^\rho \downarrow \hyp^n$. Observe that, with our choice of metric $h_\rho$ on $E^\rho$, the action of $G$ on $E^\rho$, which is transitive on the basepoints, is by flat bundle isometries, so that {\itshape the metric bundle $E^\rho \downarrow \hyp^n$ is $G$-equivariant and homogeneous} (cf.\ Definition \ref{HOMDEF}). \\
For $X \subseteq \hyp^n$ a complete, codimension $0$ hyperbolic submanifold, we let $E^\rho_X \downarrow X$ be the Hermitian {\itshape restriction bundle} of $E^\rho$ over $X$, obtained by pulling back the Hermitian bundle $E^\rho$ through the inclusion $X \hookrightarrow \hyp^n$.\\ 
 We set $G_X \coloneqq \{ \gamma \in G: \gamma . X = X \} < G$ to be the subgroup of $G$ leaving $X$ invariant and show the following:
\begin{lemma}\label{CONNECEQ} Let $X,Y \subseteq \hyp^n$ be two connected, codimesion $0$ Riemannian submaifolds of $\hyp^n$ and let $f: X \to Y$ be an orientation-preserving isometry. Then there exists a global isometry $\gamma \in G$, such that \begin{equation*} f = \gamma|_{X}. \end{equation*} In particular, $f$ extends to a flat bundle isometry $F: E_X^\rho \to E_Y^\rho$, where $F$ is given via \begin{equation*} F(x,v) =( \gamma_f . x, \rho(\gamma_f)\cdot v). \end{equation*} {\bfseries Therefore, if $X$ ist connected, the Hermitian bundle $E^\rho_X \downarrow X$ is $G_X$-equivariant. Moreover, if $X$ is locally homogeneous, so is $E^\rho_X \downarrow X$}. \end{lemma}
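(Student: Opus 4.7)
The plan is to establish the global extension $\gamma \in G$ first, from which both the flat bundle isometry $F$ and the subsequent equivariance statements drop out automatically from the $G$-equivariance of the Matsushima metric $h_\rho$. The central geometric ingredient is the classical rigidity principle: an orientation-preserving isometry between connected codimension-$0$ submanifolds of $\hyp^n$ is the restriction of an element of $G = \Isom_0(\hyp^n)$.

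To produce the candidate $\gamma$, I would fix a base point $x_0 \in X$, set $y_0 \coloneqq f(x_0)$, and use that $G$ acts simply transitively on the bundle of oriented orthonormal frames of $\hyp^n$ to obtain the unique $\gamma \in G$ with $\gamma(x_0) = y_0$ and $d\gamma_{x_0} = df_{x_0}$. To check that $f = \gamma|_X$ globally, I would study the set
\[
A \coloneqq \{\, x \in X : \gamma(x) = f(x) \text{ and } d\gamma_x = df_x \,\},
\]
which is closed by continuity. Openness follows from the standard rigidity statement that Riemannian isometries commute with the exponential map and are therefore locally determined by their value and differential at a single point. Hence $A$ is clopen in $X$, nonempty, and so equals all of $X$ by connectedness, yielding $f = \gamma|_X$.

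With $\gamma$ in hand, defining $F(x,v) \coloneqq (\gamma \cdot x, \rho(\gamma) v)$ is simply the restriction of the natural $G$-action on $E^\rho \downarrow \hyp^n$ to the single element $\gamma$; it is therefore a flat bundle map covering $f$, and is fiberwise isometric by construction of $h_\rho$. The $G_X$-equivariance of $E^\rho_X$ is then immediate, since restricting the $G$-action to $G_X \subseteq G$ produces an action by flat bundle isometries. For the local homogeneity claim, given $x,y \in X$ of the same boundary-type together with a local isometry $\varphi \colon U \to V$ supplied by local homogeneity of $X$, one notes that $U, V$ are themselves connected codimension-$0$ submanifolds of $\hyp^n$; arranging orientation-preservation by shrinking if necessary (possible since $X$ inherits an orientation from $\hyp^n$) and applying the first part of the lemma, the isometry $\varphi$ extends to the required flat bundle isometry $E^\rho_U \to E^\rho_V$.

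The main obstacle I anticipate is the openness of $A$ at points of $\partial X$, where the exponential map into $X$ is only defined on a half-space and the classical rigidity argument needs mild adaptation. I would bypass this by first establishing $f = \gamma$ on the connected, dense interior $\Int(X)$ via the usual exponential-map argument, and then extending to the boundary using continuity of both $f$ and $\gamma$ along with the continuity of their derivatives in smooth boundary charts.
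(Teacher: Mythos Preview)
Your proposal is correct and follows essentially the same strategy as the paper: produce a global isometry $\gamma$ agreeing with $f$ to first order at a point, then use connectedness together with the fact that Riemannian isometries are locally determined by their $1$-jet to conclude $f = \gamma|_X$, handling the boundary by working first on the interior and extending by continuity. The paper constructs $\gamma$ explicitly via the global hyperbolic exponential map (invoking Cartan--Hadamard and the hyperbolic law of cosines to verify it is an isometry) and then propagates local agreement along a chain of overlapping balls, whereas you invoke simple transitivity of $G$ on oriented orthonormal frames and a clopen argument; both are standard executions of the same rigidity principle.
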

\begin{proof} Let $\mathring{X}$ be the manifold interior of $X$. Since $X$ has codimension $0$, it suffices to show that $f|_{\mathring{X}} = \gamma|_{\mathring{X}}$ for some global isometry $\gamma \in G$. Thus, let $x \in \mathring{X}$ and let $f^*_x: T_x\hyp^n \to T_{f(x)}\hyp^n$ be the differential of $f$ at $x$. Since the hyperbolic exponential map $\exp_y^{\hyp^n}\colon T_y\hyp^n \to \hyp^n$ is a diffeomorphism for any $y \in \hyp^n$ (which follows, for example, from the {\itshape Cartan-Hadarmard Theorem}), we can define a global isometry $\gamma^{(x)}: \hyp^n \to \hyp^n$ as \begin{equation*}\gamma^{(x)} \coloneqq \exp_{f(x)}^{\hyp^n} \circ f^*_x \circ (\exp_x^{\hyp^n})^{-1}. \end{equation*} To see that $\gamma^{(x)}$ is indeed an isometry, observe first that by definition, $\gamma^{(x)}$ sends all geodesics passing through $x$ isometrically onto geodesics passing through $f(x)$, also preserving the angle between any two of them. Moreover, the {\itshape hyperbolic law of cosines} (see e.g.\ \cite{Reid:Hyp}) readily implies that any hyperbolic triangle $T \subset \hyp^n$ is uniquely determined up to isometry by the length of two of its sides and the interior angle between them. Now let $a,b \in \hyp^n$ be arbitrary points. Without loss of generality, we may assume that $a$ and $b$ don't already lie on a common geodesic passing through $x$. Then there exists a hyperbolic triangle $\Delta(a,b,x)$ with vertices $a,b,x$. By what we've said before, it follows that $\Delta(a,b,x)$ is isometric to the triangle $\Delta(\gamma^{(x)}(a),\gamma^{(x)}(b),\gamma^{(x)}(x))$. In particular, we get $d_g(a,b) = d_g(\gamma^{(x)}(a),\gamma^{(x)}(b))$, finally showing that $\gamma^{(x)}$ is indeed an isometry. \\ Our goal now is to show that for any two points $x,y \in \mathring{X}$, $\gamma^{(x)} = \gamma^{(y)} \equiv f$ in an open subset of $\mathring{X}$ containing $x$ and $y$. Since global isometries on $\hyp^n$ agree everywhere if they agree on a non-empty open subset, we conclude that  $\gamma^{(x)} = \gamma^{(y)} \eqqcolon \gamma$ for all $x,y \in X$, hence the result. \\ To do so, we show in a first step that $f \equiv \gamma^{(x)}$ locally around any $x \in \mathring{X}$. For this, observe that $(\gamma^{(x)})^*_x = f^*_x$ via Gauss' Lemma. Since $X$ has codimension $0$ inside $\hyp^n$, we can choose a small convex ball $B_r(x) \subseteq X$ around $x$ which is mapped isometrically via $f$ onto $B_r(f(x)) \subseteq Y$. For arbitrary $y \in B_r(x)$, we let $\alpha \colon [0,1] \to X$ be the unique geodesic contained in $B_r(x)$ that runs from $x$ to $y$. Both $\gamma^{(x)} \circ \alpha$ and $f \circ \alpha$ are again geodesics, which must be equal by the Picard-Lindel\"of theorem, since $\gamma^{(x)}(x) = f(x)$ and $(\gamma^{(x)})^*_x = f^*_x$. In particular \begin{equation} f(y) = (f \circ \alpha)(1) = (\gamma^{(x)} \circ \alpha)(1) = \gamma^{(x)}(y), \end{equation} showing that $f|_{B_r(x)} = \gamma^{(x)}|_{B_r(x)}$. Now let $y \in X$ be any other point. Since $X$ is connected, we can choose a curve $c \subseteq \mathring{X}$ from $x$ to $y$, along with a sequence of points $x \eqqcolon x_0,x_1,\dots,x_j \coloneqq y$ on $c$ and open sets $U_i \ni x_i$ in $\mathring{X}$ such that \begin{itemize} \item $\gamma^{(x_i)}|_{U_i} = f|_{U_i}$, \item $U_i \cap U_{i+1} \neq \emptyset$ for all $0 \leq i \leq j-1$. \end{itemize} Therefore, we get $\gamma^{(x_i)}|_{U_i \cap U_{i+1}} = \gamma^{(x_{i+1})}|_{U_i \cap U_{i+1}}$ for all $0 \leq i \leq j-1$. Again, since global isometries on $\hyp^n$ agree everywhere if they agree on a non-empty open subset, we conclude that $\gamma^{(x)} = \gamma^{(x_0)} = \gamma^{(x_1)} = \dots = \gamma^{(x_j)} = \gamma^{(y)} = f$ on $\bigcup_{i=0}^j U_i$, finishing the proof. \end{proof} 
For any $R \geq 0$, we introduce the bundles \begin{align} & E^\rho_{M_R} \downarrow M_R \\ & E^\rho_{C_R} \downarrow C_R, \\ & E^\rho_{T_R} \downarrow T_R. \end{align} Here, $M_R$, $C_R$ and $T_R$ are the complete submanifolds of $\hyp^n$ as defined in Equations \ref{thick},\ref{thin} and \ref{torus}. As before, we let, for each $1 \leq j \leq k$, $C_0^j \cong [0,\infty) \times \reals^{n-1}$ be a connected component of $C_0$, so that for any $R \geq 0$, $C_R^j \coloneqq C_0^j \cap C_R$ and $T_R^j \coloneqq T_R \cap C_0^j$ are connected components of $C_R$, respectively $T_R$. 
\begin{lemma}\label{BUNDISO3} For each $1 \leq j \leq k$ and any $R \geq 0$, the collection of hyperbolic isometries \begin{align*} & f_R^j: C_R^j \cong [R,\infty) \times \reals^{n-1} \to C_0^j \cong [0,\infty) \times \reals^{n-1}, \\ & (t,x) \mapsto (t-R,e^{-R}x) \end{align*} extend to a flat bundle isometry \begin{align} F_R: E^\rho_{C_R} \downarrow C_R \to E^\rho_{C_0} \downarrow C_0, \end{align} which induces by restriction a flat bundle isometry \begin{align}F_R|_{E^\rho_{T_R}}\colon E^\rho_{T_R} \downarrow T_R \to E^\rho_{T_0} \downarrow T_0. \end{align} \end{lemma}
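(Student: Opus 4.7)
The plan is to reduce to a single-connected-component analysis and then apply Lemma \ref{CONNECEQ}, invoking the $\Gamma$-action to cover the remaining components of $C_R$. First, in the coordinates $(t,x) \in [R,\infty) \times \reals^{n-1}$ with the warped product metric \ref{warpprod}, a direct computation yields
\begin{equation*}
d(t-R)^2 + e^{-2(t-R)} d(e^{-R}x)^2 = dt^2 + e^{-2t}dx^2,
\end{equation*}
showing that each $f_R^j$ is indeed an isometry between the connected codimension-$0$ hyperbolic submanifolds $C_R^j$ and $C_0^j$; its Jacobian is diagonal with positive entries so it is orientation-preserving, and it visibly maps $T_R^j = [R,R+1]\times \reals^{n-1}$ onto $T_0^j = [0,1]\times \reals^{n-1}$.

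Next, I invoke Lemma \ref{CONNECEQ} to obtain, for each $j$, a global isometry $g_j \in G$ with $g_j|_{C_R^j} = f_R^j$, together with the induced flat bundle isometry $E^\rho_{C_R^j} \to E^\rho_{C_0^j}$ given by $(x,v) \mapsto (g_j x, \rho(g_j) v)$. To extend to all of $C_R$, I use the decomposition $C_R \cong \coprod_{j=1}^k \Gamma \times_{\Gamma_0^j} C_R^j$: after fixing systems of coset representatives $R^j \subseteq \Gamma$ for $\Gamma / \Gamma_0^j$, every connected component of $C_R$ is of the form $\gamma \cdot C_R^j$ for a unique pair $(j,\gamma) \in \{1,\dots,k\} \times R^j$. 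On such a component, I define $F_R$ by $(y,v) \mapsto (\gamma g_j \gamma^{-1} y,\; \rho(\gamma g_j \gamma^{-1}) v)$, which is the flat bundle isometry associated with the global isometry $\gamma g_j \gamma^{-1} \in G$ extending $\gamma \circ f_R^j \circ \gamma^{-1}\colon \gamma C_R^j \to \gamma C_0^j$. Because the components of $C_R$ are pairwise disjoint, these component-wise definitions assemble into a single flat bundle isometry $F_R \colon E^\rho_{C_R} \to E^\rho_{C_0}$. The claim about $T_R$ is then automatic: since $(\gamma g_j \gamma^{-1})(\gamma T_R^j) = \gamma T_0^j$ for all $(j,\gamma)$, the restriction $F_R|_{E^\rho_{T_R}}$ is the required flat bundle isometry onto $E^\rho_{T_0}$.

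The only potential obstacle is organizational rather than conceptual. One checks directly that $f_R^j$ is \emph{not} $\Gamma_0^j$-equivariant: for $\delta \in \Gamma_0^j$ acting as $(t,x) \mapsto (t,x + v_\delta)$ on $C_0^j$, the compositions $f_R^j \circ \delta$ and $\delta \circ f_R^j$ differ by the scaling of $v_\delta$ by the factor $e^{-R}$ on the $\reals^{n-1}$-coordinate. Hence one cannot hope to extend $F_R$ in a $\Gamma$-equivariant fashion across components, and the construction genuinely depends on the choice of coset representatives. Since the lemma only asserts the existence of a flat bundle isometry and makes no equivariance claim, this causes no difficulty.
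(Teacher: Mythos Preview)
Your proof is correct and follows the same approach as the paper: apply Lemma \ref{CONNECEQ} to each connected component $f_R^j\colon C_R^j\to C_0^j$ to obtain a global isometry $g_j\in G$ (denoted $\gamma_R^j$ in the paper) and the associated bundle isometry $E^\rho_{C_R^j}\to E^\rho_{C_0^j}$, and then extend to all of $C_R$ via the principal-bundle decomposition $C_R\cong\coprod_{j=1}^k \Gamma\times_{\Gamma_0^j} C_R^j$. Your explicit use of coset representatives and the observation that $f_R^j$ fails to be $\Gamma_0^j$-equivariant (so that $F_R$ genuinely depends on the chosen representatives and is not $\Gamma$-equivariant) makes the extension step more transparent than the paper's descent formula.
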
 
\begin{proof}
By Lemma \ref{CONNECEQ}, there exists a unique hyperbolic isometry $\gamma_R^j \in G$ extending $f_R^j$ and a flat bundle isometry $F_R^j: E^\rho_{C_R^j} \downarrow C_R^j \to E^\rho_{C_0^j} \downarrow C_0^j$ of the form $F_R^j( (t,x),v) = ( \gamma_R^j. (t,x), \rho(\gamma_R^j) \cdot v)$. Notice that we have obvious identifications $\coprod_{j=1}^k \Gamma \times_{\Gamma_0^j} E^\rho_{C_0^j} \cong E^\rho_{C_0}$ and likewise $\coprod_{j=1}^k\Gamma \times_{\Gamma_0^j} E^\rho_{C_R^j} \cong E^\rho_{C_R}$ (as bundles over $C_0$, respectively $C_R$). Lastly, observe that the diffeomorphism \begin{align*} & F_R: \coprod_{j=1}^k \Gamma \times E^\rho_{C_R^j} \to \coprod_{j=1}^k  \Gamma \times E^\rho_{C_0^j}, \\
& \coprod_{j} \left( \gamma, (t,x),v \right) \mapsto \coprod_{j}\left( \gamma(\gamma_R^j)^{-1}, \gamma_R^j. (t,x), \rho(\gamma_R^j) \cdot v \right) \end{align*}
descends to a flat bundle isometry $F_R: \coprod_{j=1}^k\Gamma \times_{\Gamma_0^j} E^\rho_{C_R^j} \xrightarrow{\cong} \coprod_{j=1}^k\Gamma \times_{\Gamma_0^j} E^\rho_{C_0^j}$. The result follows.
\end{proof}
With respect to the hyperbolic metric $g$ and the canonical bundle metric $h_\rho$, we construct as in Section \ref{SecDeRha} the $L^2$-De Rham complex $\Omega_{(2)}^*(\hyp^n,E^\rho)$, as well as the three $L^2$-De Rham complexes
$\Omega_{(2)}^*(M_R,E^\rho_{M_R} )$, $\Omega_{(2)}^*(C_R,E^\rho_{C_R})$, $\Omega_{(2)}^*(T_R,E^\rho_{T_R})$ for each $R > 0$. Recall that all complexes come equipped with the linear, isometric $\Gamma$-action induced from the $G$-action on the respective bundles. Further, let 
\begin{align}& \Delta_*[E^\rho_{M_R}]\colon \Omega_{(2)}^*(M_R,E^\rho_{M_R} ) \to  \Omega_{(2)}^*(M_R,E^\rho_{M_R} ),  \\ 
& \Delta_*[E^\rho] \colon \Omega_{(2)}^*(\hyp^n,E^\rho) \to \Omega_{(2)}^*(\hyp^n,E^\rho). \end{align} 
be the (graded) Laplace operators of the respective bundles. Now
 recall that $E^\rho \downarrow \hyp^n$ is $G$-equivariant homogeneous. Since $G = \Isom_0(\hyp^n)$ always contains a uniform lattice, it follows from the discussions of Section \ref{SecDeRha} that $\Delta_*[E^\rho]$ admits a self-adjoint minimal closure, and that $\Omega_{(2)}(\hyp^n,E^\rho)$ is a Hilbert $\vnN(\Gamma)$-cochain complex which is Fredholm. Similarily, since $E^\rho_{M_R} \downarrow M_R$ is $G_{M_R}$-equivariant by Lemma \ref{CONNECEQ} and $\Gamma / M_R$ is compact, the same statements hold true for the operator $\Delta_*[E^\rho_{M_R}]$ and the complex $\Omega_{(2)}^*(M_R,E^\rho_{M_R})$. From now on, we identify $\Delta_*[E^\rho_{M_R}]$ and $\Delta_*[E^\rho]$ with their respective minimal closure and consider the {\itshape heat traces} \begin{align} & \vntr(e^{-t\Delta_p[E^\rho]}) = \int_{\mathcal F} \tr (e^{-t\Delta_p[E^\rho]}(x,x))\; dx, \\ & \vntr(e^{-t\Delta_p[E^\rho_{M_R}]}) = \int_{\mathcal F_R} \tr (e^{-t\Delta_p[E^\rho_{M_R}]}(x,x)) \; dx.  \end{align} These are convergent integrals for each $t > 0$, whose respective values do not depend on the explicit choice of $\mathcal F$, respectively $\mathcal F_R$. \\ Observe that, since $E^\rho \downarrow \hyp^n$ is a homogeneous bundle, there is a smooth function $H_\rho(t): \reals_{>0} \to \reals$, satisfying \begin{equation}\label{PLANCHAREL} \tr(e^{-t\Delta_p[E^\rho]}(x,x)) \equiv H_\rho(t). \end{equation} Using the {\itshape Plancherel Formula}, $H_\rho(t)$ can actually be explicitly computed, as done in \cite[Section 9]{Muller:heat}. \\
We are now well-prepared to state the first key result of this paper, which we will obtain from Proposition \ref{l2nov1} and Corollary \ref{bddetclass} in the last section.
\begin{theorem}\label{detconvclass} The cochain complexes $\Omega^*(\hyp^n,E^\rho)$ and $\Omega^*(M_R,E^\rho_{M_R})$ for each $R >0$ are $L^2$-acyclic and have positive Novikov-Shubin invariants. In particular, the cochain complexes are of determinant class and we have \begin{align*} \int_{1}^\infty t^{-1} \vntr(e^{-t\Delta_*[E^\rho]}) dt < \infty, \end{align*} as well as \begin{align*} \int_{1}^\infty t^{-1} \vntr(e^{-t\Delta_*[E^\rho_{M_R}]}) dt < \infty \end{align*} for each $R >0$. The same holds true for the complexes $\Omega^*(\partial M_R,E^\rho_{\partial M_R})$ \end{theorem}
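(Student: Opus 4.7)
My plan is to bootstrap from the compact cross--section pieces $T_R$ and $\partial M_R$ outward to $M_R$, $C_R$, and $\hyp^n$, combining the induction principle (Lemma \ref{indprinc}), the product formula of Proposition \ref{novpos1}, and the Mayer--Vietoris estimates of Proposition \ref{exactleq}.

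First, for the cusp cross--sections $T_R$ and the boundaries $\partial M_R$, I would use the explicit decomposition $T_R \cong \coprod_{j=1}^k \Gamma \times_{\Gamma_0^j} T_R^j$ with $T_R^j \cong [R,R+1]\times\reals^{n-1}$ and $\Gamma_0^j \cong \mathbb Z^{n-1}$ acting only on the Euclidean factor. Lemma \ref{indprinc} (combined with Corollary \ref{DeRhCor} to pass from the analytic to the combinatorial side) identifies the $L^2$--Betti numbers and Novikov--Shubin invariants of the Hilbert $\vnN(\Gamma)$--complex on $T_R$ with those of the Hilbert $\vnN(\Gamma_0^j)$--complex on $T_R^j$. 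Since the warped product $[R,R+1]\times\reals^{n-1}$ with metric $dt^2+e^{-2t}dx^2$ has compact first factor and positive warping function $e^{-2t}$, Proposition \ref{novpos1} applies directly and yields $L^2$--acyclicity together with positive Novikov--Shubin invariants. The argument for $\partial M_R$ is identical, replacing $[R,R+1]$ by the point $\{R\}$.

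Next, to handle $M_R$, $C_R$, and $\hyp^n$, I would invoke Mayer--Vietoris short exact sequences of $L^2$--de Rham complexes (with absolute boundary conditions) and apply Proposition \ref{exactleq} iteratively. The key decomposition is $\hyp^n = M_{R+1} \cup C_R$ with overlap $T_R$. For the cusp piece $\Omega^*_{(2)}(C_R, E^\rho_{C_R})$, the induction principle reduces us to $C_R^j \cong [R,\infty)\times\reals^{n-1}$, which I would exhaust by compact cylinders $T_R^j \cup T_{R+1}^j \cup \cdots \cup T_{R+m}^j$ and analyze by iterated application of Proposition \ref{exactleq}, using Lemma \ref{BUNDISO3} to produce a flat bundle isometry identifying each $T_\ell^j$ with $T_0^j$, so that the spectral estimates are uniform in both $\ell$ and $m$. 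This yields acyclicity and positive Novikov--Shubin invariants for $C_R$, and combined with the result for $T_R$ a final application of Proposition \ref{exactleq} transfers these properties both to $M_R$ and to $\hyp^n$.

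Once $L^2$--acyclicity and positive Novikov--Shubin invariants are in hand, the determinant class condition is immediate from the general criterion stated at the end of Subsection \ref{1.1}; and since acyclicity forces $\Delta_* = \Delta_*^\perp$, the convergence of $\int_1^\infty t^{-1} \vntr(e^{-t\Delta_*}) dt$ follows from the integral criterion recalled in the same subsection. The main obstacle is the non--compactness of the cusp $C_R$, which prevents direct use of the de Rham theorem (Theorem \ref{DeRh}) since no uniform lattice acts cocompactly on $C_R$; the core technical difficulty is to ensure that iterating Proposition \ref{exactleq} over infinitely many cylinder pieces does not degrade the Novikov--Shubin estimates to zero in the limit, and this is precisely what the uniform spectral control provided by Lemma \ref{BUNDISO3} is designed to enable.
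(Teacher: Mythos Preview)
Your treatment of $T_R$ and $\partial M_R$ via the induction principle and Proposition \ref{novpos1} is exactly what the paper does (Proposition \ref{l2nov2}). The rest of your plan, however, runs in the wrong direction and contains two genuine gaps.

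First, the iteration for $C_R$. Each application of Proposition \ref{exactleq} replaces $\lambda$ by $\lambda^{1/2}$ in the spectral density bound, so after $m$ iterations an initial estimate $F_p\le C\lambda^\alpha$ becomes $F_p\le C'\lambda^{\alpha/2^m}$ and is useless in the limit $m\to\infty$. Lemma \ref{BUNDISO3} makes the multiplicative constants $\alpha_C,\alpha_E$ in Proposition \ref{exactleq} uniform in $\ell$, but it does nothing for the exponent; uniform control on the individual cylinders $T_\ell^j$ cannot prevent this degradation.

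Second, and more fundamentally, Proposition \ref{exactleq} only bounds the \emph{middle} term of a short exact sequence from the two outer ones. In the Mayer--Vietoris sequence of Lemma \ref{exactsob}, the complex $D_p[E^\rho]$ over $\hyp^n$ sits on the left; the estimate therefore yields a bound for $D_p[E^\rho_{M_R}]\oplus D_p[E^\rho_{C_{R-1}}]$ \emph{from} $D_p[E^\rho]$ and $D_p[E^\rho_{T_{R-1}}]$, not the other way round. There is no mechanism in your toolbox that lets you recover the properties of $\hyp^n$ from those of $M_R$, $C_R$ and $T_R$.

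The paper avoids both problems by not attempting to assemble $\hyp^n$ from pieces. Acyclicity and positive Novikov--Shubin invariants for $\Omega^*_{(2)}(\hyp^n,E^\rho)$ are taken as a known input (Proposition \ref{l2nov1}, citing external work that exploits homogeneity of the bundle). With $\hyp^n$ and $T_{R-1}$ both in hand, a \emph{single} application of Proposition \ref{exactleq} (carried out in Lemma \ref{exactbound} and Proposition \ref{specbound}) bounds $F_p(D_p[E^\rho_{M_R}],\lambda)$ uniformly in $R$, yielding Corollary \ref{bddetclass}. No estimate on $C_R$ alone is ever required.
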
 The acyclicity statement from Theorem \ref{detconvclass}, together with Theorem \ref{asymptbdgeom} now imply: \begin{proposition}\label{trafo} The Hilbert $\vnN(\Gamma)$-cochain complexes $\Omega_{(2)}^*(\hyp^n,E^\rho)$ and $\Omega_{(2)}^*(M_R,E^\rho_{M_R})$ are $\zeta$-regular. That is, for each $0 \leq p \leq n$ and $s \in \mathbb C$ with $\Re(s) >> 0$, the integral expressions \begin{align} & \zeta_{p}(s) \coloneqq \Gamma(s)^{-1}\int_{0}^1 t^{s-1} \vntr(e^{-t\Delta_p[E^\rho]}) \;dt, \
\\ & \zeta_{p}^{R}(s) \coloneqq \Gamma(s)^{-1}\int_{0}^1 t^{s-1} \vntr(e^{-t\Delta_p[E^\rho_{M_R}]}) \;dt, \; \; \; R \geq 0 \end{align} determine holomorphic functions, each admitting meromorphic extensions on all of $\mathbb C$ which are regular at $0$. \end{proposition}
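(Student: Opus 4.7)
The plan is to derive this proposition as an essentially immediate consequence of Corollary \ref{zetaregi} once $L^2$-acyclicity has been established. Recall that Corollary \ref{zetaregi} asserts $\zeta$-regularity of the de Rham complex $\Omega_{(2)}^*(M,E)$ (and provides an explicit formula for $\frac{d}{ds}\zeta_p(s)|_{s=0}$) under two hypotheses: (i) the complex is $L^2$-acyclic, and (ii) the twisted heat trace admits the small-time asymptotic expansion of Theorem \ref{asymptbdgeom}. Thus the task splits into verifying (i) and (ii) for both complexes under consideration.

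For (i), I would simply cite Theorem \ref{detconvclass}, which asserts precisely that $\Omega_{(2)}^*(\hyp^n,E^\rho)$ and $\Omega_{(2)}^*(M_R,E^\rho_{M_R})$ are $L^2$-acyclic with positive Novikov--Shubin invariants. Since Theorem \ref{detconvclass} is stated in the excerpt and its proof is deferred to the last section (through Proposition \ref{l2nov1} and Corollary \ref{bddetclass}), it can be invoked here without further argument.

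For (ii), the point is that the setup of Section \ref{SecDeRha} applies in both cases. The homogeneous bundle $E^\rho \downarrow \hyp^n$ satisfies condition $(A_1)$ since $\hyp^n = G/K$ is $G$-homogeneous and $G = \SO_0(n,1)$ admits uniform lattices; for the restricted bundle $E^\rho_{M_R} \downarrow M_R$, Lemma \ref{CONNECEQ} gives $G_{M_R}$-equivariance and $\Gamma$ acts cocompactly on $M_R$, so condition $(A_2)$ applies up to the $\Gamma$-action that is relevant for the Hilbert $\vnN(\Gamma)$-module structure. In both cases the fundamental domain $\mathcal F$ (respectively $\mathcal F_R$) has finite hyperbolic volume, so the integrals $\int_{\mathcal F}\alpha_i[E^\rho]$ and the analogous boundary integrals $\int_{\partial\mathcal F_R}\beta_i[E^\rho_{M_R}|_{\partial M_R}]$ are finite. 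Consequently Theorem \ref{asymptbdgeom} yields the desired expansion of $\vntr(e^{-t\Delta_p})$ as $t \to 0$.

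With (i) and (ii) in place, Corollary \ref{zetaregi} applied to $\Omega_{(2)}^*(\hyp^n,E^\rho)$ and to $\Omega_{(2)}^*(M_R,E^\rho_{M_R})$ immediately provides the holomorphic extension of $\zeta_p$ (resp.\ $\zeta_p^R$) to a large right half-plane, its meromorphic continuation to all of $\mathbb{C}$, and regularity at $s=0$. Since the entire analytic content is packaged in Theorem \ref{detconvclass} and Corollary \ref{zetaregi}, the present proof is essentially a citation, and there is no real obstacle beyond verifying that the hypotheses fit; the genuine difficulty has been pushed into the proof of Theorem \ref{detconvclass} itself, which the later sections of the paper are devoted to.
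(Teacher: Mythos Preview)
Your proposal is correct and matches the paper's approach exactly: the paper states Proposition \ref{trafo} as an immediate consequence of the acyclicity from Theorem \ref{detconvclass} together with the asymptotic expansion of Theorem \ref{asymptbdgeom}, which is precisely what you do (with Corollary \ref{zetaregi} as the explicit bridge). Your additional remarks verifying that conditions $(A_1)$ and $(A_2)$ apply are a helpful elaboration but do not deviate from the paper's reasoning.
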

\begin{remark} The meromorphic extensions will also be denoted by $\zeta_p(s)$ and $\zeta_p^R(s)$, respectively. \end{remark} As a consequence of Theorems \ref{trafo} and \ref{detconvclass}, we can finally define the {\bfseries analytic $L^2$-torsion} $T_{(2)}^{An}(\Gamma \backslash M_R,\rho)$ and  $T_{(2)}^{An}(\Gamma \backslash \hyp^n,\rho)$ of the Hermitian bundles $E^\rho_{M_R} \downarrow M_R$ and $E^\rho \downarrow \hyp^n$ as \begin{align} &\log \left(T_{(2)}^{An}(\Gamma \backslash \hyp^n,\rho)\right) \coloneqq \sum_{p =0}^n \frac{p}{2} (-1)^{p+1} \left( \frac{d}{ds} \zeta_{p}(s)|_{s = 0} + \int_{1}^\infty t^{-1} \vntr(e^{-t\Delta_p[E^\rho]}) dt \right),
\\& \log \left(T_{(2)}^{An}(\Gamma \backslash M_R,\rho)\right) \coloneqq \sum_{p =0}^n \frac{p}{2} (-1)^{p+1} \left( \frac{d}{ds} \zeta_{p}^{R}(s)|_{s = 0} + \int_{1}^\infty t^{-1} \vntr(e^{-t\Delta_p[E^\rho_{M_R}]}) dt \right).\end{align} 
Observe that from \ref{PLANCHAREL}, it actually follows that there exists a number $\tau(\rho) \in \reals$ depending only on the representation $\rho$, such that for any lattice $\Gamma < \Isom^+(M,g)$, one has 
\begin{align*} \log \left(T_{(2)}^{An}(\Gamma \backslash \hyp^n,\rho)\right) = \Vol(\Gamma) \cdot \tau(\rho). \end{align*} For a detailed description of the element $\tau(\rho)$, we refer again to \cite[Section 9]{Muller:heat}.  The two main results of this paper, Theorems \ref{LTC} and \ref{STC}, can be summarized in one single statement: \begin{theorem}\label{anconv} For each $0 \leq p \leq n$, one has \begin{align}& \lim_{R  \to \infty} \frac{d}{ds} \zeta_{p}^{R}(s)|_{s = 0} = \frac{d}{ds} \zeta_p(s)|_{s = 0}, \\& \lim_{R \to \infty} \int_{1}^\infty t^{-1} \vntr(e^{-t\Delta_p[E^\rho_{M_R}]}) dt = \int_{1}^\infty t^{-1} \vntr(e^{-t\Delta_p[E^\rho]}) dt. \end{align}  In particular \begin{align} \lim_{R \to \infty} T_{(2)}^{An}(\Gamma \backslash M_R,\rho) = T_{(2)}^{An}(\Gamma \backslash \hyp^n,\rho). \end{align} \end{theorem}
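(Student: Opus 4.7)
The plan is to prove convergence of the zeta-function derivative and of the long-time heat integral separately, then combine them. The key tools throughout are the ``principle of not feeling the boundary'' (Theorem \ref{heat}) together with the fact that each thin end $\mathcal F_{C_R}$ has volume vanishing exponentially in $R$, a consequence of the warped-product metric \eqref{warpprod}.

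First I would establish pointwise-in-$t$ convergence of the heat traces. Choose a $\Gamma$-fundamental domain $\mathcal F$ for $\hyp^n$ decomposing as $\mathcal F = \mathcal F_R \sqcup \mathcal F_{C_R}$, with $\mathcal F_R$ a $\Gamma$-fundamental domain for $M_R$, and write
\begin{equation*}
\vntr\bigl(e^{-t\Delta_p[E^\rho]}\bigr) - \vntr\bigl(e^{-t\Delta_p[E^\rho_{M_R}]}\bigr) = \int_{\mathcal F_R} \tr\bigl(e^{-t\Delta_p[E^\rho]}(x,x) - e^{-t\Delta_p[E^\rho_{M_R}]}(x,x)\bigr)\, dx + H_\rho(t)\cdot\Vol(\mathcal F_{C_R}),
\end{equation*}
using the homogeneity identity \eqref{PLANCHAREL} on the cusp. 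The second term tends to zero as $R \to \infty$. For the first term I would split $\mathcal F_R$ at distance $D$ from $\partial M_R$: Theorem \ref{heat}(1) makes the integrand exponentially small on the deep part, while on the thin collar of width $D$, Theorem \ref{heat}(2) gives a uniform pointwise bound and the collar's volume tends to zero with $R$ by the exponential shrinking of the horospherical cross-sections.

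For the short-time integral $\int_0^1 t^{-1}\bigl(\vntr(e^{-t\Delta_p[E^\rho_{M_R}]}) - \sum_i t^{-(n-i)/2}\kappa_i^R\bigr)\,dt$ appearing in Corollary \ref{zetaregi}, local homogeneity of $E^\rho$ and Corollary \ref{IMPORTANTE} give $\kappa_i^R = a_i\Vol(\mathcal F_R) + b_i\Vol(\partial \mathcal F_R)$ with the same $a_i, b_i$ as for $\hyp^n$ (where $\kappa_i = a_i\Vol(\mathcal F)$). Since $\Vol(\mathcal F_R) \to \Vol(\mathcal F)$ and $\Vol(\partial \mathcal F_R)$ decays like $e^{-(n-1)R}$, we conclude $\kappa_i^R \to \kappa_i$, and in particular the finite combinatorial terms $\sum_i c(i,n)\kappa_i^R$ converge. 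The dominating function needed for the integrand on $(0,1]$ is then a uniform-in-$R$ version of the $O(t^{1/2})$ bound of Theorem \ref{asymptbdgeom}, which should follow from Lemma \ref{BUNDISO3}: the isometries $F_R$ identify every end of $M_R$ with the corresponding end of $M_0$, so every local coefficient in the heat expansion can be computed from the single reference bundle $E^\rho_{C_0}\downarrow C_0$. For the long-time integral, Theorem \ref{detconvclass} provides positive Novikov-Shubin invariants yielding algebraic decay of the heat traces; combined with the pointwise convergence from the first step, dominated convergence would conclude the argument.

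The principal obstacle is securing these uniform-in-$R$ dominating functions on both $(0,1]$ and $[1,\infty)$. For the short-time range, controlling the heat expansion remainder uniformly hinges on Lemma \ref{BUNDISO3} so that all local data entering the expansion depend only on $E^\rho_{C_0}$ rather than on $R$. For the long-time range, a uniform lower bound on Novikov-Shubin invariants seems to require applying Proposition \ref{exactleq} to a short exact sequence relating the complexes on $M_R$, $C_R$ and $T_R$, and then invoking the induction principle (Lemma \ref{indprinc}) together with Proposition \ref{novpos1} to transfer positivity from the abelian cross-sections of the cusps — this last step is what makes the exhaustion argument specific to hyperbolic geometry and its cuspidal end structure.
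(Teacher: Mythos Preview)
Your proposal is correct and follows essentially the same route as the paper: the paper likewise splits the argument into small-time convergence (its Theorem \ref{STC}) and large-time convergence (its Theorem \ref{LTC}), using precisely the ingredients you name --- the principle of not feeling the boundary, the exponential shrinking of the horospherical cross-sections, the bundle isometries of Lemma \ref{BUNDISO3} for uniformity near the boundary, and, for large time, a short exact sequence of (Sobolev) complexes combined with Proposition \ref{exactleq} and the abelian Novikov--Shubin positivity from Proposition \ref{novpos1} via the induction principle.

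One minor difference in execution: for the short-time piece the paper does not literally run a dominated-convergence argument with a uniform $O(t^{1/2})$ remainder. Instead it writes the difference $\Sigma[R]$ of the two zeta-derivative integrands as a sum $\Sigma_1+\Sigma_2+\Sigma_3-\Sigma_4$ (splitting $\mathcal F_R$ at $R/2$ and $R-1$) and shows each piece tends to zero directly. The delicate term is the boundary strip $\Sigma_4[R]$, where the paper interposes an auxiliary horoball $\hyp^n_R$ and uses the explicit scaling $e^{-2R+2}$ under the isometries $F_R$ to reduce to a fixed finite integral; your appeal to Lemma \ref{BUNDISO3} is exactly what makes this work, but you should be aware that the comparison goes through the horoball rather than directly through $E^\rho_{C_0}$.
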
 
We will follow the strategy developed in \cite{Lueck:hyp} for the case of the trivial bundle (i.e.\ the bundle $E^{\rho} \downarrow \hyp^n$ associated with the trivial representation $\rho: G \to \ceals$) and show that it extends to the general case that we are concerned with here. The key results will be extracted from a thorough inspection of the asymptotic behavior of $\vntr(e^{-t\Delta_p[E^\rho]})$ and $\vntr(e^{-t\Delta_p[E^\rho_{M_R}]})$ for {\itshape small time} $t \to 0$, respectively for {\itshape large time} $t \to \infty$.
 \\The respective methods involved in the inspection will actually be quite distinct, since the small time asymptotics depend only on the {\itshape local geometry} of $\hyp^n$, while for the large time asymptotics, the large scale geometry of the quotients $\Gamma \backslash M_R$, $\Gamma \backslash \hyp^n$ comes into play.\\ 
The limit formula from Theorem \ref{anconv} serves as the crucial ingredient in the proof of the final result of this paper: The $L^2$-Cheeger-M\"uller type theorem, showing the equality between the previously defined analytic $L^2$-torsion $T_{(2)}^{An}(\Gamma \backslash \hyp^n,\rho)$ and the topological $L^2$-torsion $T^{Top}_{(2)}(\Gamma \backslash \hyp^n,\rho)$, introduced in Section \ref{CombTopSec}. Of course, we still need to show that the latter quantity is well-defined. For that purpose, we observe that for each $R > 0$, any CW-structure on the compact manifold $\Gamma \backslash M_R$ serves as a finite CW-model for $\Gamma \backslash \hyp^n$. Furthermore, we regard $\rho$ as a representation of the common deck group $\Gamma$ of both $\hyp^n \to \Gamma \backslash \hyp^n$ and $M_R \to \Gamma \backslash M_R$. With this in mind, it is well-known that for $n$ odd-dimensional, the pair $(\Gamma \backslash \hyp^n,\rho)$ satisfies assertions $(T_1)$ and $(T_3)$ from Section \ref{CombTopSec} (See \cite[Lemma $4.3$]{Muller:Tor2} for $(T_1)$ and \cite[Lemma $0.12$]{Farrell:Wh} for a verification of $(T_3)$). To see that assertions $(T_2)$ and $(T_4)$ are also satisfied, we compute for each $0 \leq p \leq n$
\begin{align*}& b_{k,(2)}^{Top}(\Gamma \backslash \hyp^n,\rho) \stackrel{\text{Definition \ref{TopBetDef}}}{=} b_{k,(2)}^{Top}(\Gamma \backslash M_R,\rho) \stackrel{\text{Corollary \ref{DeRhCor}}}{=} b_{k,(2)}^{An}(\Gamma \backslash M_R,\rho) \stackrel{\text{Theorem \ref{detconvclass}}}{=} 0 \\& \alpha_p^{Top}(\Gamma \backslash \hyp^n,\rho) \stackrel{\text{Definition \ref{TopBetDef}}}{=} \alpha_p^{Top}(\Gamma \backslash M_R,\rho) \stackrel{\text{Corollary \ref{DeRhCor}}}{=} \alpha_p^{An}(\Gamma \backslash M_R,\rho) \stackrel{\text{Theorem \ref{detconvclass}}}{>} 0. \end{align*} We conclude that $T^{Top}_{(2)}(\Gamma \backslash \hyp^n,\rho)$ is well-defined with \begin{equation} T^{Top}_{(2)}(\Gamma \backslash \hyp^n,\rho) = T^{Top}_{(2)}(\Gamma \backslash M_R,\rho) \end{equation} for any $R > 0$. \\
As stated above, our ultimate goal is to show the equality of the analyic and topological torsion on $\Gamma \backslash \hyp^n$, namely \begin{theorem}[Hyperbolic Cheeger-M\"uller Theorem]\label{HypCheMül} One has \begin{equation} T^{Top}_{(2)}(\Gamma \backslash \hyp^n,\rho) = T_{(2)}^{An}(\Gamma \backslash \hyp^n,\rho). \end{equation} \end{theorem}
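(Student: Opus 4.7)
The plan is to combine the limit formula of Theorem \ref{anconv} with the torsion anomaly comparison of Theorem \ref{CheMulComp}, thereby reducing the assertion to the trivial--representation case already established by L\"uck and Schick in \cite{Lueck:hyp}. The topological invariance discussion immediately preceding the statement already yields
\begin{equation*} T^{Top}_{(2)}(\Gamma \backslash \hyp^n,\rho) = T^{Top}_{(2)}(\Gamma \backslash M_R,\rho) \end{equation*}
for every $R > 0$, so combined with Theorem \ref{anconv} it suffices to prove that the torsion anomaly
\begin{equation*} r_R(\rho) \coloneqq \log\left( \frac{T_{(2)}^{An}(\Gamma \backslash M_R,\rho)}{T^{Top}_{(2)}(\Gamma \backslash M_R,\rho)} \right) \end{equation*}
tends to $0$ as $R \to \infty$.

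The second step is to invoke Theorem \ref{CheMulComp} with $\rho_1 = \rho$ and $\rho_2 = \unit$. The hypotheses are all in place: each $\Gamma \backslash M_R$ is compact with $\Gamma < \Isom^+(M_R,g)$ acting as a uniform lattice; the canonical metric $h_\rho$ is unimodular in the sense of \cite[Definition $4.2$]{Ich2} as a consequence of \cite[Lemma $4.3$]{Muller:Tor2}; and Theorem \ref{detconvclass} guarantees that both $\Omega_{(2)}^*(M_R,E^\rho_{M_R})$ and its boundary version $\Omega_{(2)}^*(\partial M_R,E^\rho_{\partial M_R})$ are of determinant class, with the former being $L^2$-acyclic. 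Therefore Theorem \ref{CheMulComp} yields the clean scaling
\begin{equation*} r_R(\rho) = \dim(\rho) \cdot r_R(\unit). \end{equation*}

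It remains to show $r_R(\unit) \to 0$. This is precisely the main result of \cite{Lueck:hyp}, which gives the hyperbolic Cheeger--M\"uller identity $T^{Top}_{(2)}(\Gamma \backslash \hyp^n,\unit) = T_{(2)}^{An}(\Gamma \backslash \hyp^n,\unit)$ for the non-compact odd-dimensional finite volume hyperbolic manifold. Combining this with the two limit statements already invoked above (now applied to $\rho = \unit$), one obtains
\begin{equation*} \lim_{R \to \infty} r_R(\unit) = \log\left( \frac{\lim_{R \to \infty} T_{(2)}^{An}(\Gamma \backslash M_R,\unit)}{\lim_{R \to \infty} T^{Top}_{(2)}(\Gamma \backslash M_R,\unit)} \right) = \log\left( \frac{T_{(2)}^{An}(\Gamma \backslash \hyp^n,\unit)}{T^{Top}_{(2)}(\Gamma \backslash \hyp^n,\unit)} \right) = 0. \end{equation*}
Multiplying by $\dim(\rho)$ gives $r_R(\rho) \to 0$, which completes the proof.

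Given the infrastructure of Sections \ref{RelL2}--\ref{FLATRHO}, the final assembly here is essentially formal: the genuine mathematical content is absorbed into Theorem \ref{anconv}, whose proof occupies the remainder of the paper, and into Theorem \ref{CheMulComp}, which is imported from \cite{Ich2}. The principal obstacle is therefore not in this theorem itself but in the analytic limit formula of Theorem \ref{anconv}: verifying the small-time convergence of the $\zeta$-derivatives and the large-time convergence of the heat trace tails for the exhaustion $(M_R)_{R > 0}$ relies crucially on the warped-product cusp structure and will require the heat kernel comparison of Theorem \ref{heat} together with the acyclicity and positivity of Novikov--Shubin invariants provided by Theorem \ref{detconvclass}.
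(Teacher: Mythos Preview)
Your proof is correct and follows essentially the same route as the paper: both arguments use the topological invariance $T^{Top}_{(2)}(\Gamma\backslash\hyp^n,\rho)=T^{Top}_{(2)}(\Gamma\backslash M_R,\rho)$, apply Theorem~\ref{CheMulComp} (packaged in the paper as Theorem~\ref{TorQuot}) to reduce the anomaly to $\dim(\rho)\cdot r_R(\unit)$, and then pass to the limit via Theorem~\ref{anconv}. The only cosmetic difference is that the paper invokes the anomaly vanishing $r_R(\unit)\to 0$ directly as Theorem~\ref{AnZero} (from \cite[Section~1]{Lueck:hyp}), whereas you deduce it from the L\"uck--Schick equality $T^{Top}_{(2)}(\Gamma\backslash\hyp^n,\unit)=T^{An}_{(2)}(\Gamma\backslash\hyp^n,\unit)$ together with Theorem~\ref{anconv} specialized to $\unit$; since L\"uck and Schick obtain their equality precisely by proving $r_R(\unit)\to 0$, this is the same content cited at a different point in their argument. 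One minor citation slip: unimodularity of the \emph{metric} $h_\rho$ is established in \cite[Lemma~5.5.1]{Ich}, while \cite[Lemma~4.3]{Muller:Tor2} concerns unimodularity of the representation $\rho$ itself.
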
 This theorem is a strict generalization of the main result in \cite{Lueck:hyp}, where they were able to prove it for the case of the trivial representation $\rho \equiv \unit$. \\
We now explain how to derive Theorem \ref{HypCheMül} from Theorem \ref{anconv}. To do so, we need two additional results, the first of which is a Cheeger-M\"uller type Theorem on the submanifold $M_R$. Namely, we may apply Theorem \ref{CheMulComp} to the bundle $E^\rho_{M_R} \downarrow M_R$ because \begin{itemize} \item the relevant determinant class conditions are satisfied by Theorem \ref{detconvclass}, \item the canonical bundle metric $h_\rho$ is unimodular (see \cite[Lemma $5.5.1$]{Ich}), \end{itemize} which yields
\begin{theorem}\label{TorQuot} One has \begin{equation} \log \left( \frac{T_{(2)}^{An}(\Gamma \backslash M_R,\rho)}{T^{Top}_{(2)}(\Gamma \backslash M_R,\rho)} \right) = \dim_{\ceals}(\rho) \cdot \log \left( \frac{T_{(2)}^{An}(\Gamma \backslash M_R,\unit)}{T^{Top}_{(2)}(\Gamma \backslash M_R,\unit)} \right). \end{equation} \end{theorem}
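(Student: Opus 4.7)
The plan is to deduce Theorem \ref{TorQuot} as a direct corollary of Theorem \ref{CheMulComp}, applied to the manifold $M_R$ with the two representations $\rho_1 \coloneqq \rho$ and $\rho_2 \coloneqq \unit$, both restricted from $G$ to $\Gamma$. Since $\dim_{\ceals}(\unit) = 1$, rearranging the identity furnished by Theorem \ref{CheMulComp} will immediately yield the claimed equality, so the entire argument reduces to verifying that every hypothesis of Theorem \ref{CheMulComp} is met in this situation.

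First I would verify the geometric hypotheses. The submanifold $M_R \subseteq \hyp^n$ is simply connected: the inclusion $M_R \hookrightarrow \hyp^n$ is a homotopy equivalence, since each cusp region $C_R^j \cong [R,\infty) \times \reals^{n-1}$ deformation retracts onto its boundary slice inside $M_R$, so $\pi_1(M_R) = \pi_1(\hyp^n) = 0$. The uniform lattice assumption is exactly property (2) of the decomposition recalled at the start of Section \ref{FLATRHO}, namely that $\Gamma$ acts cocompactly on each $M_R$. Consequently, the pair $(M_R,\Gamma)$ with $\Gamma < \Isom^+(M_R,g)$ fits precisely into the setup of Theorem \ref{CheMulComp}.

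Next I would verify the bundle-theoretic hypotheses. Both $E^{\rho}_{M_R} \downarrow M_R$ and $E^{\unit}_{M_R} \downarrow M_R$ are $\Gamma$-equivariant Hermitian bundles by Lemma \ref{CONNECEQ}, the former equipped with the canonical metric $h_\rho$ and the latter with the standard metric on the trivial bundle. Unimodularity of $h_\rho$ is provided by \cite[Lemma 5.5.1]{Ich}, and for the trivial bundle it is immediate. The $L^2$-acyclicity and determinant class conditions for both the interior complexes $\Omega_{(2)}^*(M_R, E^{\rho_i}_{M_R}, g, h_i)$ and the boundary complexes $\Omega_{(2)}^*(\partial M_R, E^{\rho_i}_{\partial M_R}, g, h_i)$ are precisely the content of Theorem \ref{detconvclass}; since $\unit$ is itself an irreducible representation of $G = \SO_0(n,1)$, Theorem \ref{detconvclass} applies equally well to the trivial case. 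With all hypotheses in hand, Theorem \ref{CheMulComp} produces
\[ \log \left( \frac{T_{(2)}^{An}(\Gamma \backslash M_R,\rho)}{T^{Top}_{(2)}(\Gamma \backslash M_R,\rho)} \right) = \dim_{\ceals}(\rho) \cdot \log \left( \frac{T_{(2)}^{An}(\Gamma \backslash M_R,\unit)}{T^{Top}_{(2)}(\Gamma \backslash M_R,\unit)} \right), \]
which is exactly the statement of Theorem \ref{TorQuot}.

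The main obstacle, insofar as there is one, is really just bookkeeping: confirming that each technical condition appearing in Theorem \ref{CheMulComp} has already been established for the bundles under consideration. No new analytic work is required, because the heavy lifting is absorbed into Theorem \ref{detconvclass}, and the unimodularity question for $h_\rho$ has been resolved in \cite{Ich}. The only subtle point worth flagging is that Theorem \ref{CheMulComp} is stated for representations of $\Gamma$ rather than of $G$; however, both $\rho$ and $\unit$ restrict to representations of $\Gamma$, and the Hermitian metrics chosen above are precisely the $\Gamma$-invariant ones to which \cite[Lemma 5.5.1]{Ich} applies, so the restriction causes no friction in the argument.
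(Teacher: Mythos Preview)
Your proposal is correct and follows essentially the same approach as the paper: apply Theorem \ref{CheMulComp} with $\rho_1 = \rho$ and $\rho_2 = \unit$, after checking that the determinant class conditions hold via Theorem \ref{detconvclass} and that the canonical metric $h_\rho$ is unimodular via \cite[Lemma 5.5.1]{Ich}. Your write-up is in fact more explicit than the paper's about verifying the geometric hypotheses (simple connectivity of $M_R$, cocompactness of the $\Gamma$-action), but the logical structure is identical.
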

By a careful analysis of contribution to the anomaly $\log \left(\frac{T_{(2)}^{An}(\Gamma \backslash M_R,\unit)}{T^{Top}_{(2)}(\Gamma \backslash M_R,\unit)}\right)$ coming from the boundary $\partial M_R$, together with the fact that $\lim_{R \to \infty} \Vol_{g_{\hyp^n}}(\Gamma \backslash \partial M_R) = 0$, L\"uck and Schick were able to prove an asymptotic vanishing of the anomaly, see \cite[Section $1$]{Lueck:hyp} for more details:
\begin{theorem}\label{AnZero}
One has \begin{equation} \lim_{R \to \infty} \log \left(\frac{T_{(2)}^{An}(\Gamma \backslash M_R,\unit)}{T^{Top}_{(2)}(\Gamma \backslash M_R,\unit)}\right) = 0. \end{equation}
\end{theorem}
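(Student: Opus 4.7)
The plan is to follow the L\"uck--Schick strategy: express the ratio as a quantity localized near $\partial M_R$, then exploit the exponential decay of the cusp boundary volumes to conclude that this quantity vanishes in the limit. The trivial representation is unitary, so the Cheeger--M\"uller theorem for compact manifolds with boundary in its most classical form is available as a guiding principle.

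First I would appeal to an $L^2$-analogue of the Br\"uning--Ma anomaly formula for $(\Gamma \backslash M_R, \unit)$. Since the trivial representation is unitary and the complex $\Omega_{(2)}^*(M_R, E^{\unit}_{M_R})$ is acyclic of determinant class by Theorem \ref{detconvclass}, such a formula should yield
\begin{equation*}
\log\!\left(\frac{T_{(2)}^{An}(\Gamma \backslash M_R,\unit)}{T^{Top}_{(2)}(\Gamma \backslash M_R,\unit)}\right) = \tfrac{1}{4} \chi(\Gamma \backslash \partial M_R) \log(2) + \int_{\Gamma \backslash \partial M_R} \omega_R,
\end{equation*}
where $\omega_R$ is an $(n-1)$-form on the boundary built pointwise from the second fundamental form and intrinsic curvature of $\partial M_R$ in $\hyp^n$. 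Each component of $\Gamma \backslash \partial M_R$ is a flat $(n-1)$-torus $\Gamma_0^j \backslash (\{R\}\times \reals^{n-1})$ with $\Gamma_0^j \cong \mathbb Z^{n-1}$, so its Euler characteristic vanishes and the first summand drops out.

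Second I would invoke Lemma \ref{BUNDISO3}: the map $f_R^j \colon (t,x) \mapsto (t-R, e^{-R}x)$ is a hyperbolic isometry $C_R^j \to C_0^j$ carrying the cross-section $\{R\}\times\reals^{n-1}$ onto $\{0\}\times\reals^{n-1}$ in a collar-preserving way. Because $\omega_R$ is a local invariant of the ambient Riemannian geometry, $(f_R^j)^*\omega_0 = \omega_R$ on a neighborhood of the boundary cross-section. Writing $\omega_R = \phi_R \cdot d\Vol_{g|_{\partial M_R}}$, translation invariance of the warped product metric $dt^2 + e^{-2t}dx^2$ in the $\reals^{n-1}$-variable forces $\phi_R$ to be a constant (a priori depending on $R$), and isometry-invariance under $f_R^j$ shows this constant $\phi_j$ is independent of $R$.

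Third, a direct volume computation in the cusp metric gives $\Vol_g(\Gamma_0^j\backslash(\{R\}\times \reals^{n-1})) = V_j \cdot e^{-(n-1)R}$, where $V_j$ is the Euclidean volume of a fundamental domain for $\Gamma_0^j$ in $\reals^{n-1}$. Combining the three steps,
\begin{equation*}
\left|\log\!\left(\frac{T_{(2)}^{An}(\Gamma \backslash M_R,\unit)}{T^{Top}_{(2)}(\Gamma \backslash M_R,\unit)}\right)\right| \leq \sum_{j=1}^k |\phi_j|\, V_j\, e^{-(n-1)R} \xrightarrow[R \to \infty]{} 0,
\end{equation*}
since $n \geq 3$. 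The main obstacle is establishing the $L^2$-anomaly formula with a genuinely \emph{local} boundary integrand; L\"uck and Schick circumvent this by comparing $\Gamma \backslash M_R$ directly with a double formed along $\partial M_R$ (producing a product metric near the gluing hypersurface), applying the Burghelea--Friedlander--Kappeler--McDonald $L^2$-Cheeger--M\"uller theorem on the closed double, then controlling the discrepancy between the doubled and original metrics by heat-kernel decay estimates in the warped-product cusp, which is where the explicit $e^{-(n-1)R}$ factor ultimately enters.
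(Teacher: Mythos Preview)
The paper does not prove this theorem; it is quoted as a result of L\"uck and Schick with a reference to \cite[Section~1]{Lueck:hyp}, together with the one-line heuristic that the anomaly is concentrated near $\partial M_R$ and that $\Vol(\Gamma\backslash\partial M_R)\to 0$. So there is no ``paper's own proof'' to compare against beyond that summary.

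Your sketch correctly isolates the two ingredients the paper highlights (boundary localization of the anomaly, exponential decay of the boundary volume), and your computation that $\chi(\Gamma\backslash\partial M_R)=0$ and $\Vol(\Gamma\backslash\partial M_R)=e^{-(n-1)R}\cdot\text{const}$ is fine. The weak point, which you yourself flag, is the first step: a Br\"uning--Ma type $L^2$-anomaly formula of the shape you write down is not available in the literature for a non-product metric at the boundary, so the argument as written does not stand on its own. What L\"uck and Schick actually do---and what you describe in your last paragraph---is closer to the mark: deform the hyperbolic metric near $\partial M_R$ to a product metric, invoke the Burghelea--Friedlander--Kappeler--McDonald result \cite{Friedlander:Bd} (whose anomaly is a multiple of $\chi(\partial M_R)=0$), and then control the change in analytic $L^2$-torsion under the metric deformation by a term that is bounded by a constant times $\Vol(\Gamma\backslash\partial M_R)$. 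If you want an actual proof rather than a heuristic, that deformation-plus-BFKM route is the one to write out; your Br\"uning--Ma shortcut would require proving a new anomaly formula first.
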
 
{\bfseries Proof of Theorem \ref{HypCheMül}}.
For all $R > 0$, it holds that
\begin{align*} & \log(T^{Top}_{(2)}(\Gamma \backslash \hyp^n,\rho)) = \log(T^{Top}_{(2)}(\Gamma \backslash M_R,\rho)) \\ & =
\log(T_{(2)}^{An}(\Gamma \backslash M_R,\rho)) + \log \left( \frac{T^{Top}_{(2)}(\Gamma \backslash M_R,\rho)}{T_{(2)}^{An}(\Gamma \backslash M_R,\rho)} \right) \\ & \stackrel{\text{Theorem} \ref{TorQuot}}{=} \log(T_{(2)}^{An}(\Gamma \backslash M_R,\rho)) + \dim(\rho) \cdot \log \left( \frac{T^{Top}_{(2)}(\Gamma \backslash M_R,\unit)}{T_{(2)}^{An}(\Gamma \backslash M_R,\unit)} \right) . \end{align*}
Passing to the limit $R \to \infty$, the result then follows from Theorem \ref{anconv} and Theorem \ref{AnZero}. \qed

\section{Small-time convergence}

Throughout the whole section, we assume that the $L^2$-acyclicity statement from Theorem \ref{detconvclass} has already been proven. This will cause us no trouble, since Theorem \ref{detconvclass} will be proven in the last section completely independently from the results shown here. \\
Recall the following notions from the previous section: Let $\hyp^n$ be hyperbolic $n$-space, $G = \Isom_0(\hyp^n)$ the identity component of isometries on $\hyp^n$ and \begin{equation} \Gamma \subset G \end{equation} a torsion-free, non-uniform lattice. Let $\rho: G \to GL(V)$ be a complex, finite-dimensional irreducible representation of $G$ and let $E^\rho \downarrow \hyp^n$ be the associated flat bundle over $\hyp^n$, equipped with the canonical $G$-equivariant metric $h^\rho$. Further, let $C_R, T_R$ and $M_R$ be the submanifolds defined at the beginning of the previous section for each $R > 0$. Recall the associated, $\Gamma$-invariant bundles \begin{align*} & E^\rho_{M_R} \downarrow M_R, \\ & E^\rho_{C_R} \downarrow C_R, \\ & E^\rho_{T_R} \downarrow T_R, \end{align*} obtained by restriction of $E^\rho$ to the respective base space. A central fact that many important conclusions drawn from this section are based upon lies in the next result.
\begin{lemma}\label{LOCHOMIMP} For any $R > 0$, the bundle $E^\rho_{M_R} \downarrow M_R$ is locally homogeneous. In fact, \begin{enumerate} \item $E^\rho_{\mathring{M}_R} \downarrow \mathring{M}_R$ has the same local isometry type as $E^\rho \downarrow \hyp^n$. \item For any $R > 1$, there exists a neighborhood $U_R$ of $\partial M_R$, such that the collection of restricted bundles $(E_{U_R} \downarrow U_R)_{R > 1}$ all of the same local isometry type. \end{enumerate} \end{lemma}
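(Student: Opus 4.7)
My plan is to deduce both assertions of the lemma from the global homogeneity of $E^\rho \downarrow \hyp^n$ together with Lemmas \ref{CONNECEQ} and \ref{BUNDISO3}. I will first verify local homogeneity of $E^\rho_{M_R} \downarrow M_R$ in the sense of Definition \ref{HOMDEF}. For two interior points $x, y \in \mathring{M}_R$, global homogeneity of $E^\rho$ supplies some $\gamma \in G$ with $\gamma(x) = y$; since $y$ is interior, a sufficiently small neighborhood $U$ of $x$ maps via $\gamma$ into $\mathring{M}_R$, and Lemma \ref{CONNECEQ} promotes $\gamma|_U$ to a flat bundle isometry. For two boundary points $x, y \in \partial M_R$, I would use that $G = \Isom_0(\hyp^n)$ acts transitively on oriented pointed horospheres of $\hyp^n$: each boundary point lies on some horosphere component of $\partial M_R$, so one may choose $\gamma \in G$ sending the horosphere through $x$ to the one through $y$ while preserving the $M_R$-side. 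The same argument, applied with neighborhoods drawn inside $M_R$, then yields the required flat bundle isometry.

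Part (1) follows from essentially the same mechanism: the inclusion $\mathring{M}_R \hookrightarrow \hyp^n$ is an isometric open embedding, so points of $\mathring{M}_R$ trivially admit matching neighborhoods in $\hyp^n$; conversely, for any $y \in \hyp^n$, a global $\gamma \in G$ with $\gamma(x_0) = y$ for some fixed $x_0 \in \mathring{M}_R$ supplies the neighborhoods in the other direction via Lemma \ref{CONNECEQ}.

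For part (2), I would work explicitly with the cusp coordinates. For $R > 1$, set
\begin{equation*}
U_R^j \coloneqq \{(t, x) \in C_0^j \colon R - 1 < t \leq R\}, \qquad U_R \coloneqq \Gamma \cdot \bigsqcup_{j=1}^k U_R^j,
\end{equation*}
which is a $\Gamma$-invariant open neighborhood of $\partial M_R$ inside $M_R$; the requirement $R > 1$ ensures $R - 1 > 0$, so each $U_R^j$ genuinely sits inside $C_0^j \cong [0,\infty) \times \reals^{n-1}$. The global hyperbolic isometry $\gamma_R^j$ from the proof of Lemma \ref{BUNDISO3} acts on the extended cusp coordinates by $(t, x) \mapsto (t - R, e^{-R} x)$ and thus carries $U_R^j$ isometrically onto the $R$-independent slab $V^j \coloneqq \{(t, x) \colon -1 < t \leq 0\}$ of $\hyp^n$; Lemma \ref{CONNECEQ} lifts this to a flat bundle isometry $E^\rho_{U_R^j} \to E^\rho_{V^j}$. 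Transitivity of $G$ on horospheres further identifies $E^\rho_{V^j}$ with $E^\rho_{V^{j'}}$ for $j \neq j'$, so all components of $U_R$ are locally modeled on a single fixed bundle independent of $R$, giving the claimed coincidence of local isometry types.

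The main subtlety (more bookkeeping than a genuine obstacle) is orientation compatibility at boundary points: Lemma \ref{CONNECEQ} demands an orientation-preserving local isometry, so one must verify that the chosen $\gamma \in G$ mapping one horosphere to another can always be selected so that the $M_R$-side maps to the $M_R$-side. This is standard for hyperbolic lattices, since the parabolic stabilizer of a horoball in $\SO_0(n,1)$ acts on the bounding horosphere by the full group of orientation-preserving Euclidean motions, and $\SO_0(n,1)$ itself acts transitively on oriented horoball pairs in $\hyp^n$. The remaining verifications (that the cusp-coordinate formula for $\gamma_R^j$ really extends to a genuine global hyperbolic isometry, and that distinct $\Gamma$-translates of the $U_R^j$ remain disjoint) are immediate from the cusp decomposition recorded at the start of Section \ref{FLATRHO}.
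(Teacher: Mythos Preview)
Your proof is correct and follows essentially the same approach as the paper: reduce via Lemma~\ref{CONNECEQ} to statements about the base spaces, use homogeneity of $\hyp^n$ for the interior, and use a slab neighborhood $U_R \cong T_{R-1}$ of $\partial M_R$ together with the isometries of Lemma~\ref{BUNDISO3} to handle the $R$-independence in part~(2). The only cosmetic differences are that the paper maps the slabs $T_{R-1}$ to $T_0$ (staying inside the cusp chart $C_0^j$) rather than to a slab $V^j$ at heights $(-1,0]$, and that the paper handles boundary-to-boundary local homogeneity by the explicit product isometries $\unit_{[R-1,R]} \times f$ for Euclidean $f$, rather than by invoking transitivity of $G$ on oriented pointed horospheres; these are equivalent formulations of the same argument.
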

\begin{proof}
Because of Lemma \ref{CONNECEQ}, it suffices to prove the analogous statements about the local isometry type and local homogenity for the underlying base spaces of the bundles.
With this in mind, assertion $1$ follows because $\hyp^n$ is a homogeneous space, which is why all open submanifolds, in particular also $\mathring{M}_R$, have the same local isometry type as $\hyp^n$. 
To verify assertion $2$, we choose $U_R \cong T_{R-1}$. All components $T_{R-1}^j \equiv [R-1,R] \times \reals^{n-1} $ of $T_R$ are pairwise isometric. Moreover, given any two points $(R,x),(R,y) \in \{R\} \times \reals^{n-1} = T_{R-1}^j \cap \partial M_R$ within a given component of $\partial M_R$, we can choose a Euclidean isometry $f \in \Isom^+(\reals^{n-1})$ sending $x$ to $y$. Evidently, this induces an isometry $\unit_{[R-1,R]} \times f$ on $T_{R-1}$ with which we ``move vertically'', fixing each component of $T_{R-1}$ and sending $(R,x)$ to $(R,y)$. Also, observe that the isometries from Lemma \ref{BUNDISO3} show that we can transport all $T_{R-1}$ for $R > 1$ isometrically into one another by ``moving horizontally''. All in all, this shows that collection of $T_{R-1}$, $R >1$ all have the same local isometry type, verifying assertion $2$ and thus finishing the proof. 
\end{proof}
Consider the $\Gamma$-regularized traces 
\begin{align} \vntr(e^{-t\Delta_p[E^\rho]}) = \int_{\mathcal F} \tr(e^{-t\Delta_p[E^\rho]}(x,x)) dx,\\
 \vntr(e^{-t\Delta_p[E^\rho_{M_R}]}) = \int_{\mathcal F_R}\tr(e^{-t\Delta_p[E^\rho_{M_R}]}(x,x)) dx, \end{align} where as before, $dx$ denotes the volume form on $\hyp^n$ induced by the hyperbolic metric $g$. 
Here, we can and have chosen the $\Gamma$-fundamental domains $\mathcal F$, respectively $\mathcal F_R$ for the $\Gamma$-action on $\hyp^n$, respectively ${M}_R$, so that for each $R > 0$ \begin{enumerate} \item ${M}_R \cap \mathcal F = \mathcal F_R$, so that $\mathcal F =\bigcup_{R \geq 0} \mathcal F_R$. 
\item $\partial {M}_R \cap \mathcal F =\partial \mathcal {F}_R$ is a fundamental domain for the $\Gamma$-action on $\partial {M}_R$.
\item There exists a finite family $(\mathcal G_j)_{j = 1}^k$ with each $\mathcal G_j \subseteq \reals^{n-1}$ a compact euclidean submanifold, such that for $0 \leq R < S < \infty$, we have \begin{enumerate} \item $\mathcal F_S \setminus \mathcal F_R = [R,S] \times \coprod_{j  = 1}^k \mathcal G_j$,  
\item $\partial \mathcal F_R = \{R\} \times \coprod_{j = 1}^k \mathcal G_j$. 
\end{enumerate}
\end{enumerate}
Now recall again that the Hermitian bundles $E^\rho \downarrow \hyp^n$ and $E^\rho_{M_R} \downarrow M_R$ satisfy assumptions $(A_1)$, respectively $(A_2)$ from Section \ref{SecDeRha}. Lemma \ref{LOCHOMIMP}, together with the homogenity formula from Corollary \ref{IMPORTANTE} and the asymptotic expansions outlined in Theorem \ref{asymptbdgeom} readily imply:
\begin{proposition}\label{asyR}
For fixed $p \in \mathbb N$ and each $i = 0,\dots,n$, there exist {\bfseries constants} $a_i,b_i \in \mathbb C$, such that for $t \to 0$, we have 
\begin{align}\vntr(e^{-t\Delta_p[E^\rho]}) = \Vol(\mathcal F) \sum_{i=0}^n t^{-(n-i)/2} a_i + \mathcal O(t^{1/2}), \label{asy1}\end{align} and, for each $R > 0$, we have for $t \to 0$
 \begin{align} 
\vntr(e^{-t\Delta_p[E^\rho_{M_R}]}) = \sum_{i=0}^n t^{-(n-i)/2} ( \Vol(\mathcal{F}_R) a_i + \Vol(\partial \mathcal F_R)  b_i )+ \mathcal O(t^{1/2}). \label{asy2} \end{align}
\end{proposition}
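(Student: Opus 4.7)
The statement follows by combining three previously established ingredients: the asymptotic expansion of the twisted heat kernel (Theorem \ref{asymptbdgeom}), the homogeneity-induced simplification of the density forms (Corollary \ref{IMPORTANTE}), and the local isometry data recorded in Lemma \ref{LOCHOMIMP}. The plan is to apply Theorem \ref{asymptbdgeom} separately to each of the two bundles, then replace the forms $\alpha_i$ and $\beta_i$ by constant multiples of the volume forms using the (local) homogeneity of the bundles, and finally to identify the resulting constants across both expansions using the local isometry invariance clause of Theorem \ref{asymptbdgeom}.

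For the bundle $E^\rho \downarrow \hyp^n$, the base space is boundaryless, so Theorem \ref{asymptbdgeom} gives
\begin{equation*}
\vntr(e^{-t\Delta_p[E^\rho]}) = \sum_{i=0}^n t^{-(n-i)/2} \int_{\mathcal F} \alpha_i[E^\rho] + \mathcal{O}(t^{1/2}).
\end{equation*}
Since $E^\rho \downarrow \hyp^n$ is homogeneous, Corollary \ref{IMPORTANTE} produces constants $a_i \in \reals_{\geq 0}$ with $\alpha_i[E^\rho] = a_i \cdot d\Vol_g$. Integrating against the fundamental domain $\mathcal F$ yields \eqref{asy1}.

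For the bundle $E^\rho_{M_R} \downarrow M_R$, Theorem \ref{asymptbdgeom} now produces both an interior and a boundary term:
\begin{equation*}
\vntr(e^{-t\Delta_p[E^\rho_{M_R}]}) = \sum_{i=0}^n t^{-(n-i)/2}\left(\int_{\mathcal F_R}\alpha_i[E^\rho_{M_R}] + \int_{\partial \mathcal F_R} \beta_i[E^\rho_{M_R}|_{\partial M_R}]\right) + \mathcal{O}(t^{1/2}).
\end{equation*}
By Lemma \ref{LOCHOMIMP}, $E^\rho_{M_R} \downarrow M_R$ is locally homogeneous, so Corollary \ref{IMPORTANTE} supplies constants $a_i^R, b_i^R \in \reals_{\geq 0}$ with $\alpha_i[E^\rho_{M_R}] = a_i^R \cdot d\Vol_g$ and $\beta_i[E^\rho_{M_R}|_{\partial M_R}] = b_i^R \cdot d\Vol_{g|_{\partial M_R}}$.

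The key step, which is where Lemma \ref{LOCHOMIMP} becomes essential, is to identify these constants so that the notation $a_i, b_i$ (independent of $R$) is justified. Corollary \ref{IMPORTANTE} asserts that the interior constant depends only on the local isometry type of the restricted bundle over the interior, and the boundary constant depends only on the local isometry type of the bundle over an arbitrary neighborhood of the boundary. By Lemma \ref{LOCHOMIMP}(1), $E^\rho_{\mathring M_R} \downarrow \mathring M_R$ has the same local isometry type as $E^\rho \downarrow \hyp^n$, so the local isometry invariance of the densities (Theorem \ref{asymptbdgeom}) forces $a_i^R = a_i$. Similarly, by Lemma \ref{LOCHOMIMP}(2), the bundles over the neighborhoods $U_R$ of $\partial M_R$ all have a common local isometry type, so $b_i^R$ is independent of $R$ and we may rename it $b_i$. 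Substituting these identifications back yields \eqref{asy2}. There is no serious obstacle here; the whole proof is essentially bookkeeping once the results of Section \ref{SecDeRha} and Lemma \ref{LOCHOMIMP} are in place, the only point requiring care being the consistent use of the local isometry invariance to reconcile the constants appearing in the two expansions.
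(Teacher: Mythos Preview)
Your proof is correct and follows exactly the approach the paper indicates: the paper's own ``proof'' is simply the sentence preceding the proposition, stating that Lemma \ref{LOCHOMIMP}, Corollary \ref{IMPORTANTE}, and Theorem \ref{asymptbdgeom} readily imply the result. You have merely spelled out this implication in detail, including the crucial identification of the constants $a_i^R = a_i$ and $b_i^R = b_i$ via the local isometry invariance clauses, which is precisely the intended argument.
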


In order to streamline the notation, we will set \begin{align} \alpha_i \coloneqq \Vol(\mathcal F) a_i \; \; , \; \; \alpha_i^{R} \coloneqq \Vol(\mathcal F_R) a_i \; \;  , \; \; \beta_i^{R} \coloneqq \Vol(\partial \mathcal F_R) b_i. \end{align} whenever $p = 0, \dots, n$ is clear from the context. Combining Proposition \ref{asyR} with Corollary \ref{zetaregi}, we now get

\begin{corollary}\label{zetareg} It holds that
\begin{align}
& \frac{d}{ds} \zeta_p^R(s)|_{s=0}  =  \int_{0}^{1} \left( \vntr(e^{-t \Delta_p[E^\rho_{M_R}]}) - \sum_{i=0}^n t^{-(n-i)/2} (\alpha_i^R+ \beta_i^R) \right) \: \frac{dt}{t} \nonumber \\ &+ \sum_{i = 0}^n c(i,n) (\alpha_i^R   +  \beta_i^R),\\
& \frac{d}{ds} \zeta_p(s)|_{s=0} \coloneqq \int_{0}^{1}  \left( \vntr(e^{-t\Delta_p[E^\rho]}) - \sum_{i=0}^n t^{-(n-i)/2} \alpha_i \right)   \: \frac{dt}{t} + \sum_{i=0}^n c(i,n) \alpha_i,
\end{align}
where 
\begin{align} c(i,n) \coloneqq \begin{cases} - \frac{n -i}{2} & i \neq n, \\ \frac{d\Gamma}{ds} |_{s = 1} & i = n. \end{cases} \end{align}
\end{corollary}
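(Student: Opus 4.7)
The plan is to derive Corollary \ref{zetareg} as an essentially immediate specialization of the general formula in Corollary \ref{zetaregi}, once the interior/boundary heat-coefficient integrals $\kappa_i = \int_{\mathcal F} \alpha_i[E] + \int_{\partial \mathcal F} \beta_i[E|_{\partial M}]$ that appear there are rewritten in terms of the normalized constants $a_i, b_i$ introduced in Proposition \ref{asyR}. The hypotheses of Corollary \ref{zetaregi} are satisfied in both cases: $L^2$-acyclicity of the complexes $\Omega_{(2)}^*(\hyp^n,E^\rho)$ and $\Omega_{(2)}^*(M_R,E^\rho_{M_R})$ is available by the standing assumption of this section borrowed from Theorem \ref{detconvclass}, and the required small-time asymptotic expansion of $\vntr(e^{-t\Delta_p})$ is precisely the content of Proposition \ref{asyR}, itself obtained from Theorem \ref{asymptbdgeom}.

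For the complex on $\hyp^n$, I would use that $\hyp^n$ is boundaryless, so $\kappa_i$ collapses to $\int_{\mathcal F} \alpha_i[E^\rho]$. Global homogeneity of $E^\rho \downarrow \hyp^n$ together with Corollary \ref{IMPORTANTE} forces $\alpha_i[E^\rho] = a_i \cdot d\Vol_g$, whence $\kappa_i = \Vol(\mathcal F) \cdot a_i = \alpha_i$. Substituting into Corollary \ref{zetaregi} reproduces the second displayed identity of Corollary \ref{zetareg} verbatim.

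For the compact submanifold case $M_R$, I would invoke Lemma \ref{LOCHOMIMP} to get local homogeneity of $E^\rho_{M_R} \downarrow M_R$. A second application of Corollary \ref{IMPORTANTE} yields $\alpha_i[E^\rho_{M_R}] = \tilde a_i \cdot d\Vol_g$ and $\beta_i[E^\rho_{M_R}|_{\partial M_R}] = \tilde b_i \cdot d\Vol_{g|_{\partial M_R}}$ for constants $\tilde a_i, \tilde b_i$ depending only on the local isometry type of the bundle near the interior, respectively the boundary. By Lemma \ref{LOCHOMIMP}(1), $E^\rho_{\mathring M_R}$ has the same local isometry type as $E^\rho \downarrow \hyp^n$, so the local-isometry invariance of the heat coefficients built into Theorem \ref{asymptbdgeom} forces $\tilde a_i = a_i$; by Lemma \ref{LOCHOMIMP}(2), the boundary local isometry types agree across all $R > 1$, giving a single sequence of boundary constants $\tilde b_i = b_i$ independent of $R$, exactly as packaged in Proposition \ref{asyR}. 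Hence $\kappa_i = \Vol(\mathcal F_R) \cdot a_i + \Vol(\partial \mathcal F_R) \cdot b_i = \alpha_i^R + \beta_i^R$, and substituting into Corollary \ref{zetaregi} produces the first displayed identity.

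The only non-cosmetic step is the cross-identification of the interior coefficient $a_i$ across $\hyp^n$ and the interior of $M_R$, but this work is already absorbed into Proposition \ref{asyR} via the local-isometry invariance clause of Theorem \ref{asymptbdgeom}. The present corollary therefore reduces to plugging those asymptotics into Corollary \ref{zetaregi} and reading off the result, with no new analytical input required.
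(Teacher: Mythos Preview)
Your proposal is correct and follows exactly the paper's approach: the paper simply states that Corollary \ref{zetareg} is obtained by ``combining Proposition \ref{asyR} with Corollary \ref{zetaregi}'', and you have spelled out precisely how that combination works by identifying $\kappa_i$ with $\alpha_i$ (respectively $\alpha_i^R + \beta_i^R$) via local homogeneity and then substituting into the general formula.
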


Since $\tr(e^{-t\Delta_p}(x,x)) = H_\rho(t)$ is constant on $\mathbb H^n$, we obtain from Equation \eqref{asy1} the following:

 \begin{corollary} \label{uni} There exists a constant $C > 0$, such that for all $x \in \mathbb H^n$, we have for $t \to 0$ \begin{equation} |\tr(e^{-t\Delta_p[E^\rho]}(x,x)) - \sum_{i=0}^n t^{-(n-i)/2}a_i| \leq Ct^{1/2}. \end{equation} \end{corollary}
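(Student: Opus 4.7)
The plan is to exploit the fact that the bundle $E^\rho \downarrow \hyp^n$ is globally homogeneous (it is $G$-equivariant with $G$ acting transitively on $\hyp^n$) and therefore the pointwise heat trace $\tr(e^{-t\Delta_p[E^\rho]}(x,x))$ is independent of the basepoint $x$. This is precisely Equation \eqref{equivariance}: for any $\gamma \in G$ and any $x \in \hyp^n$, we have
\begin{equation*}
\tr(e^{-t\Delta_p[E^\rho]}(x,x)) = \tr(e^{-t\Delta_p[E^\rho]}(\gamma.x,\gamma.x)),
\end{equation*}
and since $G$ acts transitively on $\hyp^n$, the function $x \mapsto \tr(e^{-t\Delta_p[E^\rho]}(x,x))$ is constant. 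Call this common value $H_\rho(t)$ as in Equation \eqref{PLANCHAREL}.

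Next, I would use the integral description of the von Neumann trace with respect to the chosen fundamental domain $\mathcal F$:
\begin{equation*}
\vntr(e^{-t\Delta_p[E^\rho]}) = \int_{\mathcal F} \tr(e^{-t\Delta_p[E^\rho]}(x,x))\, dx = \Vol(\mathcal F) \cdot H_\rho(t).
\end{equation*}
Inserting this into the small-time asymptotic expansion \eqref{asy1} from Proposition \ref{asyR}, namely
\begin{equation*}
\Vol(\mathcal F) \cdot H_\rho(t) = \Vol(\mathcal F) \sum_{i=0}^{n} t^{-(n-i)/2} a_i + \mathcal{O}(t^{1/2}),
\end{equation*}
and dividing both sides by the positive number $\Vol(\mathcal F)$, we obtain
\begin{equation*}
H_\rho(t) = \sum_{i=0}^{n} t^{-(n-i)/2} a_i + \mathcal{O}(t^{1/2}) \quad \text{as } t \to 0.
\end{equation*}
Since the right-hand side is independent of $x$ and the left-hand side equals $\tr(e^{-t\Delta_p[E^\rho]}(x,x))$ for every $x \in \hyp^n$, the claimed pointwise bound holds uniformly in $x$. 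The constant $C$ can be taken to be any bound on the implicit $\mathcal{O}$-constant in \eqref{asy1} divided by $\Vol(\mathcal F)$.

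There is no real obstacle here: the entire content of the corollary is that the asymptotic expansion, which \emph{a priori} is a statement about the integrated trace, can be read pointwise on $\hyp^n$. This works only because the bundle is globally homogeneous (so the pointwise heat trace is constant), which is a feature specific to $\hyp^n$ and will fail, for instance, on the truncated manifolds $M_R$ near the boundary $\partial M_R$ — this is why the analogous statement is phrased only for the full hyperbolic space.
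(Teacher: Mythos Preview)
Your proof is correct and follows exactly the same approach as the paper: the paper simply notes that $\tr(e^{-t\Delta_p}(x,x)) = H_\rho(t)$ is constant on $\hyp^n$ and then reads off the pointwise estimate from the integrated asymptotic expansion \eqref{asy1}. Your writeup is, if anything, a more detailed version of the same one-line argument.
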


As a last intermediate result, we need: 

\begin{corollary}\label{part} For $R \geq 1$ and $t \to 0$, we have \begin{align}
& \int_{\mathcal F_R \setminus \mathcal F_{R-1}} \tr(e^{-t\Delta_p[E^\rho_{M_R}]}(x,x)) \; dx  - \sum_{i=0}^n t^{-(n-i)/2} ( \Vol(\mathcal F_R \setminus \mathcal F_{R-1}) a_i + \Vol(\partial \mathcal F_R) b_i ) \nonumber \\ &  \in \mathcal O(t^{1/2}). \end{align}  \end{corollary}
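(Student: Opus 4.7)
The strategy is to extract the asymptotic expansion of the collar integral by subtraction from the full expansion already supplied by Proposition~\ref{asyR}. Writing
\begin{equation*}
\int_{\mathcal F_R \setminus \mathcal F_{R-1}} \tr(e^{-t\Delta_p[E^\rho_{M_R}]}(x,x)) \, dx = \vntr(e^{-t\Delta_p[E^\rho_{M_R}]}) - \int_{\mathcal F_{R-1}} \tr(e^{-t\Delta_p[E^\rho_{M_R}]}(x,x)) \, dx,
\end{equation*}
the first summand on the right already satisfies the expansion \eqref{asy2}, so it suffices to obtain the small-$t$ asymptotics of the bulk integral $\int_{\mathcal F_{R-1}} \tr(e^{-t\Delta_p[E^\rho_{M_R}]}(x,x))\, dx$.

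To handle this bulk piece I would invoke the principle of not feeling the boundary (Theorem~\ref{heat}) with ambient manifold $\hyp^n$ and submanifold $N = M_R$. From the warped-product description $dt^2 + e^{-2t}dx^2$ of each collar component $T_{R-1}^j \cong [R-1,R] \times \reals^{n-1}$, any path from a point in $M_{R-1}$ to $\hyp^n \setminus M_R$ must cross the collar across a $t$-interval of length $1$, and therefore has hyperbolic length at least $1$. Hence $d(x, \hyp^n \setminus M_R) \geq 1$ for every $x \in \mathcal F_{R-1}$, and Theorem~\ref{heat} (with $k=0$ and $D=1$) yields a uniform bound
\begin{equation*}
\| e^{-t\Delta_p[E^\rho]}(x,x) - e^{-t\Delta_p[E^\rho_{M_R}]}(x,x) \| \leq C \, e^{-2/(\kappa t)}, \qquad x \in \mathcal F_{R-1}.
\end{equation*}
Since $\mathcal F_{R-1}$ has finite hyperbolic volume ($\Gamma$ acts cocompactly on $M_{R-1}$), integration produces
\begin{equation*}
\int_{\mathcal F_{R-1}} \tr(e^{-t\Delta_p[E^\rho_{M_R}]}(x,x)) \, dx = \int_{\mathcal F_{R-1}} \tr(e^{-t\Delta_p[E^\rho]}(x,x)) \, dx + \mathcal O(e^{-C'/t}).
\end{equation*}

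The remaining integral is now over the homogeneous bundle $E^\rho \downarrow \hyp^n$. By the homogeneity identity \eqref{PLANCHAREL}, the integrand is the constant $H_\rho(t)$, and Corollary~\ref{uni} gives $H_\rho(t) = \sum_{i=0}^n t^{-(n-i)/2} a_i + \mathcal O(t^{1/2})$. Thus
\begin{equation*}
\int_{\mathcal F_{R-1}} \tr(e^{-t\Delta_p[E^\rho]}(x,x)) \, dx = \Vol(\mathcal F_{R-1}) \sum_{i=0}^n t^{-(n-i)/2} a_i + \mathcal O(t^{1/2}).
\end{equation*}
Subtracting from \eqref{asy2}, using $\Vol(\mathcal F_R) - \Vol(\mathcal F_{R-1}) = \Vol(\mathcal F_R \setminus \mathcal F_{R-1})$, and absorbing the exponentially small term $\mathcal O(e^{-C'/t})$ into $\mathcal O(t^{1/2})$, delivers precisely the claim.

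The main obstacle is the geometric input in the second paragraph, namely the uniform distance lower bound $d(x, \hyp^n \setminus M_R) \geq 1$ for $x \in \mathcal F_{R-1}$. This is where the explicit warped-product description of the cusp neighbourhoods and the fact that $t$ appears as a Euclidean coordinate in the collar metric are essential; once that estimate is secured, the remaining work is bookkeeping between the pointwise short-time expansion on the homogeneous universal cover and the exponential-in-$1/t$ decay furnished by Theorem~\ref{heat}.
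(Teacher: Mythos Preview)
Your proof is correct and follows essentially the same approach as the paper: split the collar integral as the full $\Gamma$-trace minus the bulk integral over $\mathcal F_{R-1}$, control the full trace via Proposition~\ref{asyR}, and handle the bulk by combining Theorem~\ref{heat} (with $D=1$, using $d(x,\hyp^n\setminus M_R)\geq 1$ on $\mathcal F_{R-1}$) with the pointwise expansion of Corollary~\ref{uni}. The organization differs only cosmetically from the paper's version.
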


\begin{proof} We can write \begin{align} & \int_{\mathcal F_R \setminus \mathcal F_{R-1}} \tr(e^{-t\Delta_p[E^\rho_{M_R}]}(x,x)) \; dx  - \sum_{i=0}^n t^{-(n-i)/2} ( \Vol(\mathcal F_R \setminus \mathcal F_{R-1}) a_i + \Vol(\partial \mathcal F_R) b_i )  \nonumber \\
& = \vntr(e^{-t\Delta_p[E^\rho_{M_R}]}) - \sum_{i=0}^n t^{-(n-i)/2} (\Vol(\mathcal F_R)a_i + \Vol(\mathcal \partial F_R)b_i) \label{bd1} \\ & - \int_{\mathcal F_{R-1}} \tr(e^{-t\Delta_p[E^\rho_{M_R}]}(x,x)) - \sum_{i=0}^n t^{-(n-i)/2} a_i \; dx. \label{bd3} \end{align}
 \ref{bd1} is in $\mathcal O(t^{1/2})$ by Corollary \ref{asyR}. Next, observe that $d_{{M}_R}(x) \geq 1$ for any $x \in \mathcal F_{R-1}$. Therefore, applying Theorem \ref{heat} to $M = \hyp^n$, $N = {M}_R$ and $D =1$, we find constants $C,\kappa > 0$, such that for 
all $x \in \mathcal F_{R-1}$, we have \begin{equation} |\tr(e^{-t\Delta_p[E^\rho_{M_R}]} - e^{-t\Delta_p[E^\rho]})(x,x)| < Ce^{-2/(\kappa t)} \end{equation} Since $e^{-2/(\kappa t)} \in \mathcal O(t^{1/2})$, we can apply Corollary \ref{uni} and obtain that \ref{bd3} is also in $\mathcal O(t^{1/2})$. The result follows. \end{proof}

We are now equipped with all the tools needed to prove the main theorem of this section, the {\itshape small-time convergence} of the analytic $L^2$-torsion:

\begin{theorem}[Small-time convergence]\label{STC} For each $0 \leq p \leq n$, we have \begin{equation} \lim_{R \to \infty} \frac{d}{ds} \zeta_p^R(s)|_{s=0} = \frac{d}{ds} \zeta_p(s)|_{s=0}  . \end{equation} \end{theorem}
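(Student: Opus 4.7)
By Corollary \ref{zetareg}, each of $\frac{d}{ds}\zeta_p^R(s)|_{s=0}$ and $\frac{d}{ds}\zeta_p(s)|_{s=0}$ is the sum of a Mellin-type integral on $(0,1]$ and a finite linear combination $\sum_i c(i,n)(\alpha_i^R+\beta_i^R)$, respectively $\sum_i c(i,n)\alpha_i$. Convergence of the finite sums is immediate from $\mathcal F_R\nearrow\mathcal F$ with $\Vol(\mathcal F)<\infty$ (giving $\alpha_i^R\to\alpha_i$) together with the explicit exponential decay $\Vol(\partial\mathcal F_R)=e^{-(n-1)R}\sum_j\Vol_{\reals^{n-1}}(\mathcal G_j)\to 0$ forced by the cusp metric $dt^2+e^{-2t}dx^2$ (giving $\beta_i^R\to 0$). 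It therefore suffices to prove that the difference of the two Mellin integrals vanishes in the limit.

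Using $\vntr(e^{-t\Delta_p[E^\rho]})=\Vol(\mathcal F)H_\rho(t)$ from \eqref{PLANCHAREL}, I split the integrand difference $D_R(t)$ according to the partition $\mathcal F=\mathcal F_{R-1}\sqcup(\mathcal F_R\setminus\mathcal F_{R-1})\sqcup(\mathcal F\setminus\mathcal F_R)$ as $D_R=Y_R^{(1)}+Y_R^{(2)}+X_R$, where
\begin{align*}
Y_R^{(1)}(t) &= \int_{\mathcal F_{R-1}}\bigl[\tr e^{-t\Delta_p[E^\rho_{M_R}]}(x,x)-H_\rho(t)\bigr]\,dx, \\
Y_R^{(2)}(t) &= \int_{\mathcal F_R\setminus\mathcal F_{R-1}}\bigl[\tr e^{-t\Delta_p[E^\rho_{M_R}]}(x,x)-H_\rho(t)\bigr]\,dx - \Vol(\partial\mathcal F_R)\sum_{i=0}^n t^{-(n-i)/2}b_i, \\
X_R(t) &= -\Vol(\mathcal F\setminus\mathcal F_R)\Bigl[H_\rho(t)-\sum_{i=0}^n t^{-(n-i)/2}a_i\Bigr].
\end{align*}
I will show that each of $\int_0^1|X_R|\,dt/t$, $\int_0^1|Y_R^{(1)}|\,dt/t$ and $\int_0^1|Y_R^{(2)}|\,dt/t$ tends to $0$.

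The tail $X_R$ is immediate: Corollary \ref{uni} bounds $|H_\rho(t)-\sum_i t^{-(n-i)/2}a_i|\le C\sqrt t$, whence $\int_0^1|X_R|\,dt/t\le 2C\Vol(\mathcal F\setminus\mathcal F_R)\to 0$. For the interior term $Y_R^{(1)}$, Theorem \ref{heat}(1) applied with $N=M_R\subset M=\hyp^n$ gives $|\tr e^{-t\Delta_p[E^\rho_{M_R}]}(x,x)-H_\rho(t)|\le C_0(1)e^{-2d_{M_R}(x)/(\kappa t)}$ on $\mathcal F_{R-1}$. Fubini and the substitution $u=2d_{M_R}(x)/(\kappa t)$ yield $\int_0^1|Y_R^{(1)}|\,dt/t\lesssim\int_{\mathcal F_{R-1}}d_{M_R}(x)^{-1}e^{-2d_{M_R}(x)/\kappa}\,dx$; using the distance bound $d_{M_R}(x)\ge R-S$ for $x\in\mathcal F_S\setminus\mathcal F_{S-1}$ together with the exponential cusp-volume decay $\Vol(\mathcal F_S\setminus\mathcal F_{S-1})\lesssim e^{-(n-1)S}$ produces the geometric sum $\sum_{S=0}^{R-1}e^{-(n-1)S-2(R-S)/\kappa}=O(e^{-cR})$ for some $c>0$, valid in both cases $2/\kappa\gtrless n-1$.

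The boundary collar $Y_R^{(2)}$ is the main obstacle, since the $O(\sqrt t)$ remainder in Corollary \ref{part} is not manifestly uniform in $R$. Writing $Y_R^{(2)}(t)=\Theta_R(t)-\Vol(\mathcal F_R\setminus\mathcal F_{R-1})[H_\rho(t)-\sum_i t^{-(n-i)/2}a_i]$ with $\Theta_R$ the left-hand side of Corollary \ref{part}, the second piece is handled as $X_R$ was, since $\Vol(\mathcal F_R\setminus\mathcal F_{R-1})\to 0$. For $\Theta_R$, Lemma \ref{BUNDISO3} provides, for $R\ge 2$, a global flat bundle isometry $\widetilde F_R\colon E^\rho_{T_{R-2,R}}\to E^\rho_{T_{0,2}}$, where $T_{R-2,R}:=C_{R-2}\cap M_R$ (and likewise $T_{0,2}$). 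Since $\mathcal F_R\setminus\mathcal F_{R-1}\subset T_{R-1}$ and hence $d(x,\partial M_{R-2})\ge 1$, Theorem \ref{heat}(1) applied with $N=T_{R-2,R}\subset M=M_R$ gives the $R$-uniform bound $|\tr e^{-t\Delta_p[E^\rho_{M_R}]}(x,x)-\tr e^{-t\Delta_p[E^\rho_{T_{R-2,R}}]}(x,x)|\le C_0(1)e^{-2/(\kappa t)}$. The transported heat trace $K_p(t,y):=\tr e^{-t\Delta_p[E^\rho_{T_{0,2}}]}(y,y)$ is $R$-independent and, by the $\reals^{n-1}$-translation symmetry of the homogeneous bundle $E^\rho_{T_{0,2}}$, depends only on $t$ and the height coordinate $s\in[0,2]$; the pointwise heat-kernel asymptotics inherited from the compact $\Gamma_0^j$-quotient $\Gamma_0^j\backslash T_{0,2}$ give an $R$-uniform expansion on $[0,2]$ with $O(\sqrt t)$ remainder. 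Transporting back via $\widetilde F_R$ and integrating over $\mathcal F_R\setminus\mathcal F_{R-1}$ produces an expansion of the form $\Vol(\mathcal F_R\setminus\mathcal F_{R-1})\sum_i t^{-(n-i)/2}A_i+O\bigl(\Vol(\mathcal F_R\setminus\mathcal F_{R-1})(\sqrt t+e^{-2/(\kappa t)})\bigr)$ with $R$-independent $A_i$; comparing with Corollary \ref{part} forces the leading coefficients to cancel and yields $|\Theta_R(t)|\le C\Vol(\mathcal F_R\setminus\mathcal F_{R-1})(\sqrt t+e^{-2/(\kappa t)})$. Hence $\int_0^1|\Theta_R|\,dt/t=O(\Vol(\mathcal F_R\setminus\mathcal F_{R-1}))\to 0$, and combining all three pieces completes the proof.
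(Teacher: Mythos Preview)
Your argument is correct and follows the same overall architecture as the paper's proof: reduce to the Mellin integral, split over the cusp, and handle the boundary collar by rescaling to a fixed model. A few remarks on the variations and one wording issue:

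\textbf{Interior piece.} Your single term $Y_R^{(1)}$ with the Fubini/layer-cake estimate $\int_0^1 e^{-2d_{M_R}(x)/(\kappa t)}\,dt/t \lesssim d_{M_R}(x)^{-1}e^{-2d_{M_R}(x)/\kappa}$ followed by a geometric sum over the shells $\mathcal F_S\setminus\mathcal F_{S-1}$ replaces the paper's two-zone split $\Sigma_2$ (over $\mathcal F_{R/2}$) and $\Sigma_3$ (over $\mathcal F_{R-1}\setminus\mathcal F_{R/2}$). Both work; yours is a little more economical.

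\textbf{Boundary collar.} For $\Theta_R$ you compare directly to the width-$2$ slab $T_{R-2,R}$ and rescale to the fixed slab $T_{0,2}$. The paper instead goes via an additional comparison step to the horoball $\hyp^n_R$ (its $\Sigma_{4,2}$), which has only one boundary component, and then rescales. Your route is one step shorter; the paper's choice of the horoball just makes the model a hair cleaner.

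\textbf{One imprecision.} Your sentence invoking ``pointwise heat-kernel asymptotics inherited from the compact $\Gamma_0^j$-quotient'' is not literally correct: near $\partial T_{0,2}$ the \emph{pointwise} trace $K_p(t,s)$ does not admit a uniform-in-$s$ expansion with $O(\sqrt t)$ remainder. What you actually need (and what your argument uses) is only the asymptotic of the \emph{integrated} quantity $\int_1^2 K_p(t,s)e^{-(n-1)s}\,ds$, and that follows cleanly from applying Corollary \ref{part} at the single value $R=2$ together with the slab comparison from Theorem \ref{heat}. With that replacement the cancellation $A_i=a_i+\tfrac{\Vol(\partial\mathcal F_R)}{\Vol(\mathcal F_R\setminus\mathcal F_{R-1})}b_i$ (an $R$-independent ratio) drops out and your bound $|\Theta_R(t)|\le C\,\Vol(\mathcal F_R\setminus\mathcal F_{R-1})(\sqrt t+e^{-2/(\kappa t)})$ is justified.

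\textbf{Uniformity of constants.} You assert that the constant $C_0(1)$ from Theorem \ref{heat} (with ambient manifold $M_R$) is $R$-uniform. The paper addresses this by appealing to Lemma \ref{LOCHOMIMP}: the collar neighborhoods of $\partial M_R$ are mutually bundle-isometric for all $R>1$, so the local geometric data controlling the parametrix estimates is $R$-independent. You should cite this explicitly.
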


\begin{proof} 

First, observe that $\lim_{R \to \infty} \Vol(\mathcal F_R) = \Vol(\mathcal F)$ and, since $\partial \mathcal F_R$ is a flat subspace of $\partial {M}_R \cong \mathbb R^{n-1}$ which is equipped with the flat metric $e^{-2R}dx$, we also have $\lim_{R \to \infty} \Vol(\partial \mathcal F_R) = 0$. We deduce that $\lim_{R \to \infty} (\alpha_i - \alpha_i^{R}) =  \lim_{R \to \infty} \beta_i^{R} = 0$ for each $i = 0,\dots,n$. Therefore, the statement of the theorem will follow once we show that for 
\begin{equation} \Sigma[R] \coloneqq  \int_{0}^{1} \vntr(e^{-t \Delta_p[E^\rho]}) - \vntr(e^{- t\Delta_p[E^\rho_{M_R}]}) - \sum_{i=0}^n t^{-(n-i)/2} (\alpha_i - \alpha_i^{R} - \beta_i^{R}) \frac{dt}{t}, \end{equation}
we have 
\begin{equation} \lim_{R \to \infty}  \Sigma[R]  = 0. \end{equation}
Recall that $\vntr(e^{-t\Delta_p[E^\rho]}) = \Vol(\mathcal F) \tr(e^{-t\Delta_p[E^\rho]}(x,x))$ for any $x \in \mathcal F$. 
For fixed $R > 2$, we can therefore decompose $\Sigma[R] = \Sigma_1[R] + \Sigma_2[R] + \Sigma_3[R] - \Sigma_4[R]$, with  
\begin{align} & \Sigma_1[R] \coloneqq \Vol(\mathcal F \setminus \mathcal F_{R-1})  \int_{0}^{1} \tr(e^{-t\Delta_p[E^\rho]}(x,x)) - \sum_{i=0}^n t^{-(n-i)/2} \alpha_i \frac{d}{dt}, \\
&\Sigma_2[R] \coloneqq \int_{0}^{1} \int_{\mathcal F_{R/2}} \tr((e^{-t\Delta_p[E^\rho]} - e^{-t\Delta_p[E^\rho_{M_R}]})(x,x)) dx \frac{d}{dt}, \\
&\Sigma_3[R] \coloneqq \int_{0}^{1} \int_{\mathcal F_{R-1} \setminus \mathcal F_{R/2}} \tr((e^{-t\Delta_p[E^\rho]} - e^{-t\Delta_p[E^\rho_{M_R}]})(x,x)) dx \frac{d}{dt}, \\
&\Sigma_4[R] \coloneqq \int_{0}^{1} \int_{\mathcal F_R \setminus \mathcal F_{R-1}} \tr((e^{-\Delta_p[E^\rho_{M_R}]})(x,x)) dx  \\ & - \sum_{i=0}^n t^{-(n-i)/2} (\Vol(\mathcal F_R \setminus \mathcal F_{R-1}) a_i + \Vol(\partial \mathcal F_R) b_i) \frac{dt}{t}.
\end{align}
This splitting is allowed, since each one of these integrals converges, as shown in the course of the proof of the fact that $\lim_{R \to \infty} \Sigma_i[R] = 0$ for each $i = 1,\dots,4$. For this purpose, observe first that we may apply Corollary \ref{uni} to obtain a constant $C > 0$ independent of $R$, such that 
\begin{equation} |\Sigma_1[R]| \leq C\Vol(\mathcal F \setminus \mathcal F_{R-1}) \int_{0}^{1} t^{-1/2} dt = 2 C \Vol(\mathcal F \setminus \mathcal F_{R-1}) \xrightarrow{R \to \infty} 0. \end{equation} 
Secondly, observe that for any $R > 0$, the bundle map \begin{equation} \tr: \pi_1^*(E^*) \otimes \pi_2^*(E) \downarrow {M}_R \times {M}_R \to \mathbb C \times {M}_R \times {M}_R \downarrow {M}_R \times {M}_R \end{equation} is uniformly bounded (with respect to the canonical constant Hermitian metric on $\mathbb C \times {M}_R \times {M}_R \downarrow {M}_R \times {M}_R$) by a constant independent of $R$. 
Now $d_{\mathcal F_{R}}(x) \geq 1$ for any $x \in \mathcal F_{R-1}$ and $d_{\mathcal F_{R}}(x) \geq R/2 > 1$ for any $x \in \mathcal F_{R/2}$. Thus, we can now apply Theorem \ref{heat}.$(2)$ with $D = 1$ and obtain constants $C_1,C_2$, independent of $R$, such that \begin{align}
&| \tr((e^{-t\Delta_p[E^\rho]} - e^{-t\Delta_p[E^\rho_{M_R}]})(x,x))| < C_1 e^{-R^2/(2tC_2)}  \; \; \; \; \forall x \in \mathcal F_{R/2}, \\
& | \tr((e^{-t\Delta_p[E^\rho]} - e^{-t\Delta_p[E^\rho_{M_R}]})(x,x))|  < C_1 e^{-2/(tC_2)} \; \; \; \; \forall x \in \mathcal F_{R-1}. \end{align}
Heuristically, these estimates imply that the integrand in $\Sigma_2[R]$ decays exponentially fast in $R$, while the volume of the domain of integration $\mathcal F_{R/2}$ is, of course, uniformly bounded by the volume of $\mathcal F$. On the other hand, the integrand in $\Sigma_3[R]$ is uniformly bounded for all $R$, while the volume of the domain of integration, $\mathcal F_{R-1} \setminus \mathcal F_{R/2}$ decays exponentially fast. Put into action, we obtain 
\begin{align} & |\Sigma_2[R]| \leq C_1 \Vol(\mathcal {F}_{R/2}) \int_{0}^1 e^{-R^2/(2tC_2)} \frac{dt}{t}  \leq C_1\Vol(\mathcal F_{R/2})  \int_{0}^1 e^{-R^2/(2tC_2)} \frac{dt}{t^2} \\  \nonumber & \leq  \Vol(\mathcal F) \frac{2C_1C_2}{R^2} e^{-R^2/(2C_2)} \xrightarrow{R \to \infty} 0, \\
& |\Sigma_3[R]| \leq C_1 \Vol(\mathcal {F}_{R-1} \setminus \mathcal F_{R/2}) \int_{0}^{1} e^{-2/(tC_2)} \frac{dt}{t} \leq \frac{C_1C_2}{2} \Vol(\mathcal F_{R-1} \setminus \mathcal F_{R/2}) e^{-2/C_2} \\ \nonumber &
\leq \frac{C_1C_2}{2} e^{-(2+R)/C_2}(R/2 - 1) \xrightarrow{R \to \infty} 0.   \end{align}
The proof of $\lim_{R \to \infty} |\Sigma_4[R]| = 0$ requires a little more work. For that purpose, first define the horoball $\hyp^n_R \coloneqq (- \infty, R] \times \reals^{n-1} \subset \hyp^n$, so that the restriction of the hyperbolic metric on $\hyp^n_R$ is of the warped product form $dr^2 + e^{-2r}dx^2$. Denote by
$\Delta_p[\hyp^n_R]$ the Bochner-Laplace operator corresponding to the flat restriction bundle $E^{\rho}|_{\hyp^n_R}$ with corresponding heat kernel $e^{-t\Delta_p[\hyp^n_R]}(x,y)$. 
This allows us to decompose $\Sigma_4[R]$ as the sum of the following convergent integrals
\begin{align} &\Sigma_4[R] = \overbrace{\int_{0}^{1} \int_{\mathcal F_R \setminus \mathcal F_{R-1}} \tr((e^{-t\Delta_p[E^\rho_{M_R}]} - e^{-t\Delta_p[{M}_R \setminus {M}_{R-2}]})(x,x)) dx \frac{dt}{t}}^{\Sigma_{4,1}[R]},  \\ 
&+ \overbrace{\int_{0}^{1} \int_{\mathcal F_R \setminus \mathcal F_{R-1}} \tr((e^{-t\Delta_p[{M}_R \setminus {M}_{R-2}]} - e^{-t\Delta_p[\hyp^n_R]})(x,x)) dx\frac{dt}{t}}^{\Sigma_{4,2}[R]}, & \\
& + \overbrace{\int_{0}^{1} \int_{\mathcal F_R \setminus \mathcal F_{R-1}} \tr(e^{-t\Delta_p[\hyp^n_R]}(x,x)) dx- \sum_{i=0}^n t^{-(n-i)/2} (\Vol(\mathcal F_R \setminus \mathcal F_{R-1}) a_i + \Vol(\partial \mathcal F_R) b_i) \frac{dt}{t}}^{\Sigma_{4,3}[R]}. 
 \end{align}
The first two summands converge, as is shown in the course of the next paragraph, and therefore also the third. To deal with the first two summands, we can apply Theorem \ref{heat} in the following fashion: For $\Sigma_{4,1}[R]$, we put ${M}_R$ in the role of the ambient manifold and regard ${M}_R \setminus {M}_{R-2}$ as a submanifold. In this setting, we have
$d_{{M}_R \setminus {M}_{R-2}}(x) \geq 1$ for any $x \in \mathcal F_R \setminus \mathcal F_{R-1}$. Therefore, Theorem  \ref{heat} provides us with constants $D_1,D_2 > 0$, such that \begin{equation}
|\tr((e^{-t\Delta_p[E^\rho_{M_R}]} - e^{-t\Delta_p[{M}_R \setminus {M}_{R-2}]})(x,x)) | < D_1 e^{\frac{2}{D_2t}}. \end{equation}
Moreover, as ${M}_R$ and ${M}_0$ have isometric neighborhoods (in $\hyp^n$), both $D_1$ and $D_2$ can be chosen independently of $R$. This implies that 
\begin{equation} |\Sigma_{4,1}[R]| \leq \Vol(\mathcal F_{R} \setminus \mathcal F_{R-1}) \frac{D_1D_2}{2} e^{\frac{2}{D_2}} \xrightarrow{R \to \infty} 0. \label{1st} \end{equation} 
Replacing ${M}_R$ by $\hyp^n_R$ in the role of the ambient manifold, an analogous argument yields 
\begin{equation} \lim_{R \to \infty} |\Sigma_{4,2}[R]| = 0. \label{2nd} \end{equation} 
Next, observe that by our choice of $\mathcal F_R$, we have \begin{align} & \Vol(\mathcal F_R \setminus \mathcal F_{R-1}) = e^{-2R +2} \Vol(\mathcal F_1 \setminus \mathcal F_0),  \label{bound1} \\ & \Vol(\partial \mathcal F_R) = e^{-2R + 2} \Vol(\partial \mathcal F_1). \label{bound2} \end{align}
Secondly, recall that $\reals^{n-1}$ has transitive isometry group, and observe that any isometry $i: \reals^{n-1} \to \reals^{n-1}$ extends to an isometry $\unit_{(-\infty,R]}\times i: \hyp^n_R \to \hyp^n_R$. Moreover, we have an isometry 
\begin{align} & I_R: \hyp^n_R \to \hyp^n_1,  \nonumber \\ & I_R(u,y) \coloneqq (u + 1 - R, e^{R-1}y). \end{align} Using Lemma \ref{CONNECEQ} and Equation \ref{equivariance}, we therefore obtain a smooth map $h: (0,\infty) \times (-\infty,1] \to \mathbb C$, such that for $x = (u,v) \in (-\infty,\infty) \times \reals^{n-1}$, we have
\begin{align}& \tr(e^{-t\Delta[\hyp^n_1]}(x,x)) = h(t,u), & \text{if} \; x \in \hyp^n_1, \\ 
& \tr(e^{-t\Delta[\hyp^n_R]}(x,x)) = h(t,u + 1 - R), & \text{if} \; x\in \hyp^n_R . \end{align}
This implies that
\begin{align}&  \int_{\mathcal{F}_R \setminus \mathcal F_{R-1}} \tr(e^{-t\Delta[\hyp^n_R]}(x,x)) dx = \int_{\mathcal G} \int_{R-1}^{R} h(t,u -R +1) e^{-2u} du dy  \nonumber\\ 
& = \int_{\mathcal G} \int_{0}^{1} h(t,u) e^{-2(u + R -1)} du dy = e^{-2R + 2} \int_{\mathcal G} \int_{0}^{1} h(t,u) e^{-2u} du dy \nonumber\\ 
& = e^{-2R + 2} \int_{\mathcal F_1 \setminus \mathcal F_0} \tr(e^{-t\Delta[\hyp^n_1]})(x,x) dx. \label{bound3}  \end{align}
Equations \ref{bound1}, \ref{bound2} and \ref{bound3} now yield the equality
\begin{align} & \Sigma_{4,3}[R]  \nonumber = e^{-2R + 2}  \bigg( \int_{0}^{1} \int_{\mathcal F_1 \setminus \mathcal F_0} \tr(e^{-t\Delta_p[\hyp^n_1]}(x,x)) )dx \nonumber \\&\qquad  - \sum_{i=0}^n t^{-(n-i)/2} (\Vol(\mathcal F_1\setminus \mathcal F_{0}) a_i + \Vol(\partial \mathcal F_1) b_i) \frac{dt}{t} \bigg)  \xrightarrow{R \to \infty} 0. \label{3rd} \end{align}
Therefore, $\lim_{R \to \infty} \Sigma_{4}[R] = 0$ follows from Equations \ref{1st}, \ref{2nd} and \ref{3rd} and Corollary \ref{part}, finally finishing the proof of the theorem. 
\end{proof}

\section{Large-time convergence} 
The main result of this section relies on a careful analysis of the spectral density functions associated with different de Rham-type complexes derived from a given flat bundle. We first establish general estimates in this vein before applying them to our special hyperbolic scenario.
\subsection{Sobolev complexes}
We assume the same setup as in Section \ref{SecDeRha}. 
For the Riemannian manifold $(M,g)$ under inspection, we suppose first that $\partial M = \emptyset$. Then we define for each $s \in \mathbb R_{\geq 0}$ the {\itshape $s$-th Sobolev space} $\sob_s^*(M,E)$ to be the minimal domain of the unbounded operator $(1 + \Delta_*)^{s/2}$ (constructed by means of the spectral theorem) defined over $\Omega^*_{(2)}(M,E)$. It becomes a Hilbert space when equipped with the inner product
\begin{equation} \langle \omega, \sigma \rangle_{s} \coloneqq \langle (1 + \Delta_*)^{s/2} \omega, \sigma \rangle. \end{equation}
Now suppose that $\partial M \neq \emptyset$. Using that $\partial \partial M = \emptyset$, we define for each $k \in \mathbb N_0$ the Sobolev spaces $\sob_k^*(M,E)$ (with absolute boundary conditions) inductively as the completion of $\Omega^*_c(M,E)$ with respect to the norms 
\begin{align*}& ||\omega||_0^2 = ||\omega||^2, \\& ||\omega||_{k+1}^2 \coloneqq ||\omega||_k^2 + ||d\omega||_k^2 + ||\delta \omega||_k^2 + ||i^*\#\omega||_{k + 1/2}^2. \end{align*}
Since all operators involved in the construction of the Sobolev norms are $\Gamma$-equivariant, one can show in a straightforward fashion that each space $\sob_s^*(M,E)$ is a Hilbert $\vnN(\Gamma)$-module, $\Gamma$-equivariantly isomorphic to $\sob_s(\mathcal F, E|_{\mathcal F}) \hat \otimes L^2(\Gamma)$ with $\mathcal F \subseteq M$ a $\Gamma$-fundamental domain, so that $\partial \mathcal F \subseteq \partial M$ a fundamental domain for the $\Gamma$ action on $\partial M$ (just as in the case $s = 0$ seen previously). \\
Similarly as in the empty boundary case, we could also define $\sob_k^*(M,E)$ as the minimal domains of appropriate unbounded operators (See \cite[Theorem $3.4.4$]{Ich}). This way, it becomes apparent that we have an inclusion of subspaces $... \subseteq \sob_2^*(M,E) \subseteq \sob_1^*(M,E) \subseteq \sob_0^*(M,E) = \Omega_{(2)}^*(M,E)$.
\begin{dfn}[The de Rham and Sobolev complexes] Let $E \downarrow M$ be as above and let $0 \leq p \leq n$. 
\begin{enumerate} \item The {\bfseries Sobolev chain complex at level $p$}, denoted by $D_p[E]$ is the cochain complex of Hilbert spaces, defined as \begin{equation}
\dots \rightarrow 0 \rightarrow \sob_2^{p-1}(M,E) \xrightarrow{d} \sob_1^{p}(M,E) \xrightarrow{d} \sob_0^{p+1}(M,E) = \Omega_{(2)}^{p+1}(M,E) \rightarrow 0 \rightarrow \dots. \end{equation}
\item The {\bfseries absolute Sobolev chain complex at level $p$}, denoted by $D_{p,abs}[E]$ is the cochain complex of Hilbert spaces, defined as \begin{equation}
\dots \rightarrow 0 \rightarrow \sob_{2,abs}^{p-1}(M,E) \xrightarrow{d} \sob_{1,abs}^{p}(M,E) \xrightarrow{d} \sob_0^{p+1}(M,E) \rightarrow 0 \rightarrow \dots, \end{equation} where
\begin{align} & \sob_{2,abs}^{p-1}(M,E) \coloneqq  \{ \omega \in  \sob_2^{p-1}(M,E): i^*(\#\omega) = 0  = i^*(\# d\omega) \}, \\
& \sob_{1,abs}^{p}(M,E) \coloneqq \{ \omega \in \sob_1^{p}(M,E) : i^*(\#\omega) = 0  \}. \end{align} and the inner product on each space is the one induced by $\sob_2^{p-1}(M,E)$, respectively $\sob_1^p(M,E)$. 
\item The {\bfseries de Rham complex at level $p$}, denoted by $L_{p}[E]$ is the cochain complex of Hilbert spaces, defined as \begin{equation}
\dots \rightarrow 0 \rightarrow \Omega_{(2)}^{p-1}(M,E) \xrightarrow{d} \Omega_{(2)}^p(M,E) \xrightarrow{d} \Omega_{(2)}^{p+1}(M,E) \rightarrow 0 \rightarrow \dots. \end{equation}
\end{enumerate}
\end{dfn}
All three complexes are Hilbert $\vnN(\Gamma)$-cochain complexes. We have an inclusion of cochain complexes $D_{p,abs}[E] \subseteq D_{p}[E] \subset L_p[E] \subseteq \Omega_{(2)}^*(M,E)$ for any $0 \leq p \leq n$, so that, with the previously defined norms, $D_{p,abs}[E] \subseteq D_{p}[E]$ is a closed subcomplex and the inclusion $D_p[E] \subset L_p[E]$ is {\itshape bounded}. Crucially, the differentials on $D_{p}[E]$ are bounded and everywhere defined as linear operators, ensuring that both $D_p[E]$ and $D_{p,abs}[E]$ are {\itshape bounded} Hilbert $\vnN(\Gamma)$-cochain complexes. 
\subsection{Comparsion of the spectrum near $0$}
We will be interested in comparing the spectral density functions of the three complexes. First 
\begin{proposition}\label{density} For $0 \leq p \leq m$, let $\Delta_p^{\perp}[E]: \Omega^p_{(2)}(M,E) \to \Omega^p_{(2)}(M,E)$ the orthogonal Laplacian. Suppose that $\Omega^p_{(2)}(M,E)$ has trivial $L^2$-cohomology. Then we have \begin{equation}
F(\Delta_p[E],\sqrt{\lambda})=F(\Delta_p^{\perp}[{E}],\sqrt{\lambda}) = F_p(L_p[{E}],\lambda) + F_{p-1}(L_{p-1}[{E}], \lambda) \end{equation} for all $\lambda \geq 0$. \end{proposition}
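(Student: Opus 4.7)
The plan is to combine the Hodge decomposition of $\Omega^p_{(2)}(M,E)$ forced by $L^2$-acyclicity with the standard $\Gamma$-unitary equivalence between $T^*T$ and $TT^*$ on their non-kernel parts. The first equality is immediate: trivial $L^2$-cohomology gives $\ker(\Delta_p[E]) = 0$, so $\Delta_p[E]^{\perp} = \Delta_p[E]$ and $F(\Delta_p[E],\sqrt{\lambda}) = F(\Delta_p^{\perp}[E],\sqrt{\lambda})$ for every $\lambda$.

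For the second equality, I invoke the $\Gamma$-invariant orthogonal decomposition recorded in Subsection \ref{1.1},
$$\Omega^p_{(2)}(M,E) \;=\; \overline{\im(d^{p-1})} \;\oplus\; \overline{\im((d^p)^*)},$$
the harmonic summand being absent by acyclicity. With respect to this splitting $\Delta_p$ decomposes as the orthogonal direct sum $d^{p-1}(d^{p-1})^* \oplus (d^p)^* d^p$, and because $\vntr$ is additive over $\Gamma$-invariant orthogonal direct sums, the spectral density of $\Delta_p$ splits accordingly. On the coexact summand $\overline{\im((d^p)^*)} = \overline{\im(d^{p-1})}^\perp$ (acyclicity identifies these), the operator $(d^p)^* d^p$ agrees with $\abs{d^p|_{\overline{\im(d^{p-1})}^\perp}}^2$, so unwinding the definitions of $F$ and $F_p$ from Subsection \ref{1.1} produces exactly $F_p(L_p[E],\lambda)$; the $\sqrt{\lambda}$ on the left-hand side of the proposition is accounted for by the passage between $|f|$ and $f^*f$ in those definitions.

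The exact summand is subtler because $F_{p-1}(L_{p-1}[E],\lambda)$ is defined in terms of $(d^{p-1})^* d^{p-1}$ on a subspace of $\Omega^{p-1}$, whereas the exact contribution to $\Delta_p$ involves $d^{p-1}(d^{p-1})^*$ on a subspace of $\Omega^p$. I would bridge these via the polar decomposition $d^{p-1} = U\,\abs{d^{p-1}}$ from Subsection \ref{1.1}: the $\Gamma$-equivariant partial isometry $U$ restricts to a unitary isomorphism $\overline{\im((d^{p-1})^*)} \xrightarrow{\cong} \overline{\im(d^{p-1})}$ that intertwines $(d^{p-1})^* d^{p-1}$ with $d^{p-1}(d^{p-1})^*$. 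Since $\vntr$ is preserved under $\Gamma$-equivariant unitary conjugation, the associated spectral densities coincide; applying acyclicity once more to identify $\overline{\im((d^{p-1})^*)} = \ker(d^{p-1})^\perp = \overline{\im(d^{p-2})}^\perp$ then yields $F_{p-1}(L_{p-1}[E],\lambda)$ on the nose. The main technical point, verifying that the Hodge decomposition and polar intertwining function correctly for the unbounded $\Gamma$-equivariant operators $d^{p-1}$ and $d^p$, is supplied by the standard functional calculus for closed densely defined morphisms of Hilbert $\vnN(\Gamma)$-modules reviewed in Subsection \ref{1.1}, so the remaining work is just the bookkeeping outlined above.
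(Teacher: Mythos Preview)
Your proposal is correct and follows essentially the same route as the paper. Both arguments use acyclicity to obtain $\Delta_p=\Delta_p^\perp$, split $\Delta_p$ along the Hodge decomposition into $(d^p)^*d^p$ on the coexact part and $d^{p-1}(d^{p-1})^*$ on the exact part, and then identify the two summands with $F_p(L_p[E],\lambda)$ and $F_{p-1}(L_{p-1}[E],\lambda)$ respectively. The only difference is presentational: where the paper packages the exact-summand step into the single line ``$F(f,\lambda)=F(f^*f,\sqrt{\lambda})$'' applied with $f=(d^{p-1})^{*}$ (implicitly using that $f^\perp$ and $(f^*)^\perp$ have identical spectral densities), you make this explicit by invoking the polar decomposition $d^{p-1}=U\,|d^{p-1}|$ and the resulting $\Gamma$-unitary intertwining of $(d^{p-1})^*d^{p-1}$ with $d^{p-1}(d^{p-1})^*$. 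That is precisely the standard justification of the identity the paper quotes, so the two proofs are the same argument at different levels of detail.
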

\begin{proof} Since there is no $L^2$-cohomology, we have both $\Delta_p[E] = \Delta_p^\perp[E]$, as well as $F_p(L_p[E],\lambda) \stackrel{\text{Definition}}{=} F(d^p|_{\im(d^{p-1})^\perp},\lambda) \stackrel{\ker(d^{p}) = \overline{\im(d^{p-1})}}{=} F((d^p)^\perp,\lambda)$ for each $p$. Using the identities $F(f \oplus g,\lambda) = F(f,\lambda) + F(g,\lambda)$ and $F(f,\lambda) = F(f^*f,\sqrt{\lambda})$, we compute 
\begin{align*} &  F(\Delta_p^{\perp}[{E}],\sqrt{\lambda}) 
 = F\left( ((d^p)^* d^p)^\perp \oplus (d^{p-1} (d^{p-1})^*)^\perp, \sqrt{\lambda} \right) \\ & = F(((d^p)^* d^p)^\perp, \sqrt{\lambda}) + F_p((d^{p-1} (d^{p-1})^*)^\perp,\sqrt{\lambda}) 
 = F((d^p)^\perp,\lambda) + F((d^{p-1})^\perp,\lambda) \\ & = F_p(L_p[{E}],\lambda) + F_{p-1}(L_{p-1}[{E}],\lambda). 
\end{align*} \end{proof} Recall that the vector bundle $E \downarrow M$ under consideration is {\itshape trivial}, i.e, of the form $M \times V \equiv M \times \mathbb C^m$ for some $m \in \mathbb N$.
Triviality of $E \downarrow M$ allows us to identify smooth sections of $E$ with smooth maps from $M$ to $\mathbb C^m$, which leads to an identification of $C^\infty(M,\mathbb C)$-modules \begin{align}  \Omega^p(M,E) \cong \Omega^p(M)^m,  \\ \Omega^p(\partial M, E) \cong \Omega^p(\partial M)^m. \end{align}  Assume that $\partial M \neq \emptyset$. For appropriate $w > 0$, let $\partial M_w \cong [0,w) \times \partial M$ be the geodesic collar around $\partial M$ of width $w$. Under this identification, we therefore find for any $\omega \in \Omega^p_c(M,E)$ appropriate smooth $1$-parameter families of forms $\omega_1(t) \subset \Omega^p_c(\partial M)^m$ and $\omega_2(t) \subset \Omega^{p-1}_c(\partial M)^m$, such that 
\begin{equation} \omega(t,x) = \omega_1(t)(x) + dt \wedge \omega_2(t)(x), \hspace{.5cm} \forall (t,x) \in [0,w) \times \partial M \cong \partial M_w. \end{equation}
Denote by $\hat{\Delta}_p[E]$ the $p$-th Laplacian on the flat Hermitian restriction bundle $E \downarrow \partial M$.  Let $\phi: [0,w] \to \reals^+$ be a smooth map identically $1$ near $0$ and identically $0$ for all $t > w/2$. With this data in mind, define
\begin{align}& K^p: \Omega^p_c(M,E) \to \Omega^{p-1}(M,E), \\
& K^p \omega \coloneqq \begin{cases} \phi(u) \cdot \int_{0}^u e^{-te^{1+\hat{\Delta}_p[E]}} \omega_2(t)(\;.\;) dt & \text{on} \; \partial M_w, \\ 0 & \text{elsewhere}. \end{cases} 
\end{align}
An immediate, but important consequence is that $K^p \omega$ depends only on the restriction $\omega|_{\partial M_w}$ and that the support of $K^p\omega$ lies in $\partial M_w$. Just as in the case with for the trivial bundle, one can now proceed line by line as in \cite[Lemma 5.5, Proposition 5.6]{Lueck:3-mflds} to show the following:
\begin{proposition}\label{chainhtpy} Let $p \in \mathbb N$ and assume that $\Gamma$ is uniform. Then, for $r = 0,1,2$, the map $K^p$ extends to a bounded operator. \begin{equation} 
 K^p_r: \sob_r^p(M,E) \rightarrow  \sob_{r+1}^{p-1}(M,E).  \end{equation} The norm $||K^p_r||$ depends only on $r$ and flat isometry class of the restriction bundle $E|_{\partial M_w} \downarrow \partial M_w$. Setting $K_{-1}^p \coloneqq 0$ for all $p \in \mathbb N$, we furthermore obtain \begin{enumerate}
\item For $* = 0,1,2$ and each $p \in \mathbb N$, the map \begin{equation} j_p^{p+1-*} = 1 - dK_{*}^{p+1-*} + K_{ * - 1}^{p + 2 - *}d: \sob_{*}^{p+1-*}(M,E) \to \sob_{*}^{p+1-*}(M,E) \end{equation} has image in $\sob_{*,abs}^{p+1-*}(M,E)$ and extends to a morphism of Hilbert $\mathcal N(\Gamma)$-cochain complexes $j_p: D_p[E] \to D_{p,abs}[E]$. 
\item The {\itshape reduced complexes} \begin{align*}& \overline{D}_p[E] \coloneqq \dots 0 \rightarrow \sob_{1}^p(E) / \overline{\im(d)} \to \sob_{0}^{p+1}(E) \rightarrow 0 \dots, \\
& \overline{D}_{p,abs}[E] \coloneqq \dots 0 \rightarrow \sob_{1,abs}^p(E) /\overline{\im(d)} \to \sob_{0}^{p+1}(E) \rightarrow 0 \dots \end{align*}
are chain homotopy equivalent. More precisely, the map $j_p$ descends to a map $\overline{j}_p: \overline{D}_p[E] \to \overline{D}_{p,abs}[E]$, that is the chain homotopy inverse to the induced inclusion $\overline{i}_p: \overline{D}_{p,abs}[E] \to \overline{D}_p[E]$. The respective null-homotopies are induced from $i_* \circ K^*_*$ and $K^*_* \circ i_*$. 
\end{enumerate}
\end{proposition}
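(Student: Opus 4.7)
The plan is to follow the Lück--Schick proof \cite[Lemma $5.5$, Proposition $5.6$]{Lueck:3-mflds} from the trivial-bundle case essentially line by line, using the observation that all the analytic inputs have direct twisted analogues, and that all constants can be made to depend only on data supported in a neighborhood of $\partial M$. The triviality of $E$ as a vector bundle allows me to interpret $E$-valued forms locally as tuples of scalar forms, and the $G$-equivariance of the Hermitian metric $h$ (which is automatic in the situations $(A_1), (A_2)$) supplies the uniform geometric control needed to port Lück's scalar estimates.

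First, I would establish the boundedness of $K^p_r: \sob_r^p(M,E) \to \sob_{r+1}^{p-1}(M,E)$ for $r=0,1,2$. The key analytic input is that the heat semigroup $e^{-t \cdot e^{1+\hat{\Delta}_p[E]}}$ on the boundary is smoothing of infinite order with operator norms controlled by the spectral calculus of the twisted boundary Laplacian $\hat{\Delta}_p[E]$. Since the restriction bundle $E|_{\partial M_w} \downarrow \partial M_w$ is locally isometric to the corresponding trivial model (up to the Hermitian factor, which is locally constant in the $G$-equivariant case), the scalar estimates of \cite{Lueck:3-mflds} transfer verbatim: integrating the smoothing kernel against the normal-slice forms $\omega_2(t)$ over a finite geodesic collar gains exactly one Sobolev degree, and the cutoff $\phi$ guarantees compact vertical support. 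Uniformity of $\Gamma$ is used here only to ensure that the Sobolev norms up to level $2$ can indeed be realized as minimal-domain norms for an unbounded operator on a Hilbert $\vnN(\Gamma)$-module, so the resulting $K^p_r$ is a morphism of Hilbert $\vnN(\Gamma)$-modules. Since the construction only samples $\omega$ on $\partial M_w$ and involves only $\hat{\Delta}_p[E]$ and $\phi$, the operator norm $\|K^p_r\|$ depends solely on $r$ and the flat Hermitian isometry class of $E|_{\partial M_w}$.

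Once boundedness is in hand, part (1) is a purely algebraic verification. The standard computation shows $d \circ j_p^{p+1-*} = j_p^{p+2-*} \circ d$ (up to the convention $K^p_{-1}=0$), so the $j_p^\bullet$ assemble into a cochain map $j_p: D_p[E] \to D_{p,abs}[E]$. The image lands in the absolute Sobolev subspaces because the explicit formula for $K^p\omega$ forces both $i^*(\# K^p\omega)$ and $i^*(\# d K^p \omega)$ to vanish at $\{0\} \times \partial M$: this follows from the factor $\phi(u)\cdot \int_0^u(\dots)dt$, which vanishes at $u=0$, and from unwinding $\#d$ on forms given in the collar decomposition $\omega = \omega_1(t) + dt\wedge \omega_2(t)$. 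For part (2), the same formula $j_p = 1 - dK - Kd$ automatically provides null-homotopies: $\overline{i}_p \circ \overline{j}_p - \unit$ and $\overline{j}_p \circ \overline{i}_p - \unit$ descend to $dK + Kd$, which becomes chain-homotopic to zero on the reduced complexes once one mods out the closure of the image of $d$. All these manipulations are formal and identical to the trivial-bundle case.

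The main technical obstacle, and really the only place where one has to work in the twisted setting, lies in Step~1: verifying that the twisted heat operator $e^{-te^{1+\hat{\Delta}_p[E]}}$ enjoys exactly the same mapping properties between twisted Sobolev spaces on $\partial M$ as its scalar counterpart, with norm estimates depending only on local isometry data. The cleanest route is to use that for any ball $B \subset \partial M_w$ on which $E$ trivializes isometrically, the twisted Laplacian $\hat{\Delta}_p[E]$ differs from a direct sum of scalar Laplacians by a $0$-th order zeroth-order multiplication operator whose coefficients are controlled by the (locally constant) Hermitian data. A partition of unity argument together with finite-propagation-speed style truncations, or alternatively a direct comparison of spectral projections, then transfers the Lück--Schick Sobolev estimates to our setting. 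With this in hand, the rest of the proof is essentially bookkeeping.
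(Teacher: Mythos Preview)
Your proposal is correct and follows precisely the approach the paper takes: the paper itself does not give an independent proof but simply states that ``one can now proceed line by line as in \cite[Lemma 5.5, Proposition 5.6]{Lueck:3-mflds}'' from the trivial-bundle case, which is exactly what you outline in more detail. Your elaboration on why the twisted Sobolev estimates and the null-homotopy bookkeeping go through unchanged is consistent with (and more explicit than) the paper's treatment; note only the minor sign slip in your part (2), where $j_p = 1 - dK + Kd$ rather than $1 - dK - Kd$.
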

We now obtain an important, intermediate result, relating the spectral density functions of the three different de Rham type complexes we have defined previously. In order to understand its proof, recall that we have an inclusion of $\Gamma$-invariant subspaces $\dots \subseteq \sob_2^*(M,E) \subseteq \sob_1^*(M,E) \subseteq \sob_0^*(M,E) = \Omega_{(2)}^*(M,E)$. Given $k \in \mathbb N_0$, a subset $A \subseteq \sob_k^*(M,E)$ is said to be {\itshape $j$-closed} for $j \leq k$ if it is closed in the the $\sob_j$-topology. Using the convention $A^{\perp_0} = A^\perp$, we will denote by $A^{\perp_k} \subseteq \sob_k^*(M,E)$ the $\sob_k$-orthogonal complement of $A$ inside $\sob_k^*(M,E)$. 
\begin{proposition}\label{estimate3} We find constants $C_1,C_2 > 0$, depending only on the flat isometry class of the restriction bundle $E|_{\partial M_w} \downarrow \partial M_w$, (with $\partial M_w \coloneqq \emptyset$ if $\partial M = \emptyset$) such that all of the following hold: \begin{enumerate} \item 
If $\partial M \neq \emptyset$ and $\Gamma$ is uniform, we have \begin{equation*} F_p(D_{p,abs}[E],C_1^{-1} \lambda) \leq F_p(D_{p}[E],\lambda) \leq F_p(D_{p,abs}[E],C_1\lambda) \end{equation*} for all $\lambda \leq C_2$. 
\item We have \begin{equation*} F_p(L_p[E],\lambda) \leq F_p(D_{p,abs}[E],\lambda) \leq F_p(L_p[E],\sqrt{2}\lambda) \end{equation*} for all $\lambda \leq \frac{1}{\sqrt{2}}$.
\item If either $\partial M = \emptyset$ or if $\partial M \neq \emptyset$ and $\Gamma$ is uniform, we have  \begin{align*} F_p(L_{p}[E],C_1^{-1}\lambda) \leq  F_p(D_p[E],\lambda) \leq F_p(L_p[E],C_1\sqrt{2}\lambda). \end{align*} for all $\lambda \leq \min\{ C_2, \frac{1}{C_2\sqrt{2}} \}$.
\end{enumerate}
\end{proposition}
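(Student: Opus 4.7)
The plan is to prove each of the three estimates in turn, with part (3) deduced by chaining parts (1) and (2).

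For part (1), I would appeal to the chain homotopy equivalence between the reduced complexes $\overline{D}_p[E]$ and $\overline{D}_{p,abs}[E]$ supplied by Proposition \ref{chainhtpy}. The crucial, easily-verified observation is that passing to the reduced complex leaves the $p$-th spectral density function unchanged: by definition $F_p(C_*,\lambda) = F(c_p|_{\im(c_{p-1})^\perp},\lambda)$ only depends on $c_p|_{\im(c_{p-1})^\perp}$, and the quotient $C_p/\overline{\im(c_{p-1})}$ is canonically isometric to that very complement. Hence $F_p(D_p[E],\lambda)=F_p(\overline{D}_p[E],\lambda)$ and likewise for $D_{p,abs}[E]$. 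Theorem \ref{DILEQ} then applies to the chain homotopy equivalence of the reduced complexes and delivers the dilatational estimate. Tracking constants through the proof of Theorem \ref{DILEQ}, the resulting $C_1,C_2$ are expressible in terms of the operator norms of the chain homotopy data of Proposition \ref{chainhtpy}, all of which depend only on the flat isometry class of $E|_{\partial M_w} \downarrow \partial M_w$.

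For part (2), I would argue by a direct comparison of the operators that govern the spectral density functions. The lower bound $F_p(L_p[E],\lambda) \leq F_p(D_{p,abs}[E],\lambda)$ is essentially the monotonicity $\|\omega\|_0 \leq \|\omega\|_1$: any form contributing to the $L^2$-side satisfies $\|d\omega\|_0 \leq \lambda\|\omega\|_1$ automatically, and the $\sob_1$-image-complement is large enough to absorb these contributions via a standard trace-min comparison. For the upper bound, the key input is that if $\omega$ lies in the $\sob_1$-orthogonal complement of $\overline{\im(d^{p-1})}$ inside $\sob_{1,abs}^p$, then after a Hodge-type projection one may assume $\delta\omega = 0$; combined with the absolute boundary condition $i^*\#\omega = 0$, this collapses the $\sob_1$-norm to $\|\omega\|_1^2 = \|\omega\|_0^2 + \|d\omega\|_0^2$. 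The inequality $\|d\omega\|_0 \leq \lambda \|\omega\|_1$ then becomes $\|d\omega\|_0^2 \leq \lambda^2(\|\omega\|_0^2 + \|d\omega\|_0^2)$, and for $\lambda \leq 1/\sqrt 2$ this rearranges to $\|d\omega\|_0 \leq \sqrt 2 \lambda \|\omega\|_0$, yielding $F_p(D_{p,abs}[E],\lambda) \leq F_p(L_p[E], \sqrt 2\lambda)$ as claimed.

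Part (3) is then an immediate chaining of (1) and (2): when $\partial M \neq \emptyset$ and $\Gamma$ is uniform, both apply; when $\partial M = \emptyset$ the Sobolev complexes $D_p[E]$ and $D_{p,abs}[E]$ coincide, so (1) is vacuous and (3) reduces to (2). The main obstacle I anticipate is the upper bound in part (2): making precise how the $\sob_1$-orthogonal complement of $\overline{\im(d^{p-1})}$ inside $\sob_{1,abs}^p$ interacts with the standard $L^2$-Hodge decomposition, so that the step $\delta\omega = 0$ is genuinely justified rather than tacitly assumed. A secondary, more bookkeeping-heavy concern is verifying that the constants in Proposition \ref{chainhtpy} feed through Theorem \ref{DILEQ} in a way that really depends only on the local isometry class of $E|_{\partial M_w}$, as needed in subsequent applications of this proposition.
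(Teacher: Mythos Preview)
Your proposal is correct and follows essentially the same route as the paper. For part (1) the paper likewise invokes Theorem \ref{DILEQ} together with Proposition \ref{chainhtpy}; for part (3) both of you chain (1) and (2), using $D_p[E]=D_{p,abs}[E]$ when $\partial M=\emptyset$. For part (2), the paper's argument is exactly the norm-collapse computation you describe, and your anticipated ``main obstacle'' is resolved there not by a Hodge-type projection but by the identification $\ker(d^p|_{\sob^{p}_{1,abs}})^{\perp_1}=\dom(d^p)\cap\ker(d^p)^{\perp}=\sob^{p}_{1,abs}(M,E)\cap\overline{\im(\delta)}$ (quoted from \cite[Lemma 5.9]{Lueck:hyp}), which gives $\delta\omega=0$ and $i^*\#\omega=0$ directly for every $\omega$ in the relevant complement, so that $\|\omega\|_1^2=\|\omega\|_0^2+\|d\omega\|_0^2$ holds without any modification of $\omega$.
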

\begin{proof}
(1): Follows from Theorem \ref{DILEQ} and Proposition \ref{chainhtpy}. \\
(2): Throughout the proof, we will use the equalities \begin{align}\label{sobequal} \ker( d^p|_{\mathcal W^{1,p}_{abs}(E)})^{\perp_1} = \dom(d^p) \cap \ker(d^p)^{\perp} = \mathcal W^{p}_{1,abs}(E) \cap \overline{\im(\delta^p)}, \end{align} as established in \cite[Lemma 5.9]{Lueck:hyp}, as well as the identity \begin{equation} F(f,\lambda) = \sup \{ \vntr(p_L): L \subseteq \mathcal H \; \text{closed, $\Gamma$-invariant subspace}  \; : ||f(x)|| \leq \lambda ||x|| \; \forall x \in L \}, \end{equation} as established in \cite[Lemma $2.3$]{Lueck:Book}. We will first show that \begin{align*}F_p(L_p[E],\lambda) \leq F_p(D_{p,abs}[E],\lambda). \end{align*} For this, we let $L \subseteq \mathcal D(d^p) \cap \ker(d^p)^{\perp}$ be a $0$-closed, $\Gamma$-invariant subspace, such that each $v \in L$ satisfies $||dv||_0 \leq \lambda ||v||_0$. By the left-hand equality of \ref{sobequal}, and the fact that the $1$-topology is stronger than the $0$-topology, it follows that $L \subseteq \mathcal W^{p}_{1,abs}(E)$ is a $1$-closed, $\Gamma$-invariant subspace. Since $||v||_0 \leq ||v||_1$, we also have $||dv||_0 \leq \lambda ||v||_1$. All in all, this implies that $F_p(L_p[E],\lambda) \leq F_p(D_{p,abs}[E],\lambda) $. \\
In order to show that \begin{align*}F_p(D_{p,abs}[E],\lambda) \leq F_p(L_p[E],\sqrt{2}\lambda), \end{align*} we let $L \subseteq  \ker( d^p|_{\mathcal W^{1,p}_{abs}(E)})^{\perp_1}$ be a $1$-closed, $\Gamma$-invariant subspace, satisfying $||dv||_0 \leq \lambda ||v||_1$ for any $v \in L$. 
Now observe that by the equality of the first and the last term in Equation \ref{sobequal}, we also have $||v||_1^2 = ||v||_0^2 + ||dv||_0^2$ for any $v \in L$, which implies that
\begin{equation} ||v||_0 \leq ||v||_1 \leq \sqrt{1 - \lambda^2} ||v||_0, \end{equation} for any $v \in L$. Therefore, $L$ is also $0$-closed inside $\mathcal D(d^p) \cap \ker(d^p)^{\perp}$. 
Moreover, \begin{align*} &||dv||_0^2 \leq \lambda^2 ||v||_1^2  \leq \lambda^2  ||v||_0^2  +\frac{1}{2} ||dv||_0 \\ & \implies ||dv||_0 \leq \sqrt{2}\lambda||v||_0, \end{align*} from which the inequality $F_p(D_{p,abs}[E],\lambda) \leq F_p(L_p[E],\sqrt{2}\lambda)$, and therefore the desired claim, follows. \\
(3): Keeping in mind that $D_{p,abs}[E] = D_{p}[E]$ whenever $\partial M = \emptyset$, (3) follows immediately from (1) and (2).
\end{proof}

Adopting the notation from the previous section, the ultimate goal of this section is to find a uniform polynomial upper bound for the spectral density functions of the complex $\Omega^*(M_R, E_R)$, {\bfseries independent of $R$}, which will then easily imply the desired large-time convergence. 

\begin{proposition}\label{equiv}  There exists constants $C_1,C_2 > 0$ independent of $R \geq 1$, such that for all $p =0,\dots,m$ and all $\lambda \leq \min\{ C_2, \frac{1}{C_2\sqrt{2}} \}$, we have
\begin{align*} & F_p(L_{p}[E^\rho],C_1^{-1}\lambda) \leq  F_p(D_p[E^\rho],\lambda) \leq F_p(L_p[E^\rho],C_1\sqrt{2}\lambda), \\
& F_p(L_{p}[E^\rho_{M_R}],C_1^{-1}\lambda) \leq  F_p(D_p[E^\rho_{M_R}],\lambda) \leq F_p(L_p[E^\rho_{M_R}],C_1\sqrt{2}\lambda), \\ 
& F_p(L_{p}[E^\rho_{T_R}],C_1^{-1}\lambda) \leq  F_p(D_p[E^\rho_{T_R}],\lambda) \leq F_p(L_p[E^\rho_{T_R}],C_1\sqrt{2}\lambda).
 \end{align*}
\end{proposition}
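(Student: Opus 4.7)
The plan is to deduce Proposition \ref{equiv} from three simultaneous applications of Proposition \ref{estimate3}(3), one for each of the bundles $E^\rho \downarrow \hyp^n$, $E^\rho_{M_R} \downarrow M_R$, and $E^\rho_{T_R} \downarrow T_R$, and then to upgrade the resulting constants $C_1, C_2$ so that they become uniform in $R$. The crucial observation, already recorded in the statement of Proposition \ref{estimate3}, is that these constants depend only on the flat isometry class of the bundle restricted to a geodesic collar $\partial M_w$ of the boundary (and are universal when the boundary is empty). Hence the entire task is to verify that the three collar families carry a single, $R$-independent flat isometry class.

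First, $\hyp^n$ has empty boundary, so Proposition \ref{estimate3}(3) yields universal constants directly. Second, for $E^\rho_{M_R} \downarrow M_R$ with $R > 1$, Lemma \ref{LOCHOMIMP}(2) provides neighborhoods of $\partial M_R$ of a common local isometry type, which by the explicit cusp description $C_R^j \cong [R,\infty) \times \reals^{n-1}$ with warped metric $dt^2 + e^{-2t}dx^2$ can be refined to collars of a fixed width $w$. Third, for $E^\rho_{T_R} \downarrow T_R$, Lemma \ref{BUNDISO3} produces explicit flat bundle isometries identifying each $E^\rho_{T_R}$ with $E^\rho_{T_0}$, so the collar bundles all belong to a single isometry class by construction.

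Before invoking Proposition \ref{estimate3} in the boundary cases one must verify its uniformity hypothesis that the relevant lattice act cocompactly. This is automatic: $\Gamma$ acts cocompactly on each $M_R$ by the very definition of the thick--thin decomposition, and on $T_R \cong \coprod_j \Gamma \times_{\Gamma_0^j} T_R^j$ cocompactness follows from the cocompactness of each $\Gamma_0^j \cong \mathbb Z^{n-1}$ acting on the Euclidean slice $T_R^j \cong [R,R+1] \times \reals^{n-1}$. Finally, I merge the three resulting pairs of constants by taking the maximum of the three $C_1$'s and the minimum of the three $C_2$'s, thereby obtaining a single pair valid for all three bundles and all $R \geq 1$.

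The main (and really the only substantive) obstacle is ensuring that the merely \emph{local} homogeneity from Lemma \ref{LOCHOMIMP}(2) transfers into a genuine global flat bundle isometry on collars of a uniform width for every $R > 1$; this is precisely the content of Lemma \ref{BUNDISO3}, so that once those two inputs are combined the argument reduces to bookkeeping on top of Proposition \ref{estimate3}.
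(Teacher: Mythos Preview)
Your proposal is correct and follows essentially the same route as the paper: apply Proposition \ref{estimate3}(3) to each of the three bundles, note that cocompactness of $\Gamma$ on $M_R$ and $T_R$ is immediate from the thick--thin decomposition, and then argue that the collar bundles $E^\rho|_{\partial(M_R)_w}$ and $E^\rho|_{\partial(T_R)_w}$ all share a single flat isometry class, so the constants produced by Proposition \ref{estimate3} do not move with $R$. The paper carries this out with the specific choice $w = 1/3$ and writes down the collar identifications $\partial(M_R)_{1/3}^0 \cong [R-1/3,R] \times \reals^{n-1}$ and $\partial(T_R)_{1/3}^0 \cong ([R,R+1/3]\,\dot\cup\,[R+2/3,R+1]) \times \reals^{n-1}$ explicitly, then invokes the \emph{argument} of Lemma \ref{BUNDISO3} (rather than the lemma verbatim) to transport these onto the corresponding collars for $R = 1$; your appeal to Lemma \ref{BUNDISO3} works once one observes that the width-$1/3$ collar of $\partial M_R$ sits inside $T_{R-1}$, so the bundle isometry $F_{R-1}$ already restricts to it.
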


\begin{proof} 
Since $\Gamma$ acts cocompactly on both $M_R$ and $T_{R}$, we get the above inequalities from Proposition \ref{estimate3}, choosing $w = 1/3$ (the width of the geodesic collar around the boundary), with constants $C_1(R), C_2(R)$, depending {\itshape a priori} on $R > 1$, but only on the flat isometry classes of the restrictions $E^\rho_{\partial (M_R)_{1/3}} \downarrow \partial (M_R)_{1/3}$ and $E^\rho|_{\partial (T_{R})_{1/3}} \downarrow \partial (T_{R})_{1/3}$. Let $\partial(M_R)_{1/3}^0$ be a connected component of $\partial(M_R)_{1/3}$ and let $\partial (T_{R})_{1/3}^0$ be the intersection of $\partial (T_{R})_{1/3}$ with a connected component of $T_R$. Then, from the explicit end structure laid out in the beginning of Section \ref{FLATRHO}, there are isometric diffeomorphisms \begin{align*} & \partial(M_R)_{1/3}^0 \cong [R - 1/3,R] \times \reals^{n-1}, \\ & \partial (T_{R})_{1/3}^0 \cong ([R,R+1/3] \dot \cup [R+2/3,R+1]) \times \reals^{n-1}, \end{align*} each sending the hyperbolic metric to the warped product metric $dt^2 + e^{-2t} dx^2$. Using the very same local isometries and following the same arguments as in Lemma \ref{BUNDISO3}, we obtain two bundle isometries 
\begin{align*} & E^\rho|_{\partial (M_R)_{1/3}} \downarrow \partial (M_R)_{1/3} \cong E^\rho|_{\partial (M_1)_{1/3}} \downarrow \partial (M_1)_{1/3}, \\ &
E^\rho|_{\partial (T_{R})_{1/3}} \downarrow \partial (T_{R})_{1/3} \cong E^\rho|_{\partial (T_{1})_{1/3}} \downarrow \partial (T_{1})_{1/3}. \end{align*}
Consequently, we get $C_1(R) = C_1(1) \eqqcolon C_1$ and $C_2(R) \eqqcolon C_2(1) \eqqcolon C_2$ for any $R > 1$ and the result follows. 
\end{proof}

\begin{proposition}\cite[Theorem $1.1$, Proposition $1.2$]{Durland:Novi}\label{l2nov1} The Hilbert $\vnN(\Gamma)$-cochain complex $\Omega_{(2)}^*(\hyp^n,E^\rho)$ has trivial cohomology and positive Novikov-Shubin invariants. \end{proposition}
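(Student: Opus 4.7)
The plan is to exploit the homogeneity of $E^\rho \downarrow \hyp^n$ to reduce the statement to a Plancherel-type computation of the spectral density of $\Delta_p[E^\rho]$ for $G = SO_0(n,1)$, and then extract both assertions from the resulting explicit formula.

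First I would use Equation \eqref{PLANCHAREL} combined with the identity $\vntr(e^{-t\Delta_p[E^\rho]}) = \Vol(\mathcal F) \cdot H_\rho(t)$, so that the full spectral data of $\Delta_p[E^\rho]$, in the $\Gamma$-trace sense, is encoded in the single smooth function $H_\rho(t)$, and equivalently, via inverse Laplace transform, in the spectral density function $F(\Delta_p[E^\rho],\lambda)$. Next, by unfolding the heat kernel on the symmetric space $G/K = \hyp^n$ through Harish-Chandra's Plancherel formula for $G$, one expresses
\begin{equation}
H_\rho(t) = \int_{\widehat G_{\mathrm{temp}}} e^{-t(\kappa(\pi) + c_\rho)} \, m_p(\pi) \, d\mu_{\mathrm{Pl}}(\pi),
\end{equation}
where $\kappa(\pi)$ is the Casimir eigenvalue on the tempered representation $\pi$, $c_\rho$ is a constant depending only on $\rho$, $m_p(\pi)$ counts the multiplicity of the $K$-type $\Lambda^p\mathfrak p^* \otimes V_\rho$ in $\pi|_K$, and $d\mu_{\mathrm{Pl}}$ is the Plancherel measure. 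Crucially, since $n$ is odd, $G$ admits no discrete series, so the integral runs purely over the unitary principal series $\{\pi_\nu\}_{\nu \in \reals}$, whose Plancherel density is smooth and polynomial-type.

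From here, the spectral density function $F(\Delta_p[E^\rho],\lambda)$ is identified with the push-forward of $m_p \cdot d\mu_{\mathrm{Pl}}$ under the smooth, proper map $\pi_\nu \mapsto \kappa(\pi_\nu) + c_\rho$. Since this push-forward is absolutely continuous with respect to Lebesgue measure on $\reals_{\geq 0}$ and carries no atom at $0$ (the Casimir is a non-constant polynomial in $\nu$ with at most a finite set of preimages of any fixed value), one obtains $F(\Delta_p[E^\rho],0) = 0$, which is exactly the vanishing of the $p$-th $L^2$-Betti number. Moreover, standard Morse-theoretic estimates on the push-forward density near the infimum of its support yield a polynomial bound of the form $F(\Delta_p[E^\rho]^\perp,\lambda) \leq C \lambda^\beta$ for all sufficiently small $\lambda > 0$, with $\beta > 0$ depending only on $n$ and $p$. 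This gives $\alpha_p(\Omega_{(2)}^*(\hyp^n, E^\rho)) \geq \beta > 0$ as required.

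The main technical obstacle is the explicit computation of the push-forward density, which requires the branching rules for $K = SO(n) \hookrightarrow SO_0(n,1)$ acting on $\Lambda^p \mathfrak p^* \otimes V_\rho$, together with a careful verification that every $K$-type which can contribute non-trivially avoids producing an atom at zero under $\kappa(\pi_\nu) + c_\rho$. These are precisely the computations carried out in \cite[Theorem 1.1, Proposition 1.2]{Durland:Novi}, where the proof hinges on the classical realization of principal series for $SO_0(n,1)$ with $n$ odd and the explicit formula for the Plancherel density $|c(\nu)|^{-2}$ on the non-compact dual.
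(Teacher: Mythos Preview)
The paper does not supply its own proof of this proposition; it is quoted verbatim as a citation of \cite[Theorem 1.1, Proposition 1.2]{Durland:Novi} and used as a black box in the subsequent arguments. There is therefore nothing in the paper to compare your proposal against.

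That said, your outline is the standard Plancherel-theoretic route and is consistent with what one expects the cited reference to contain. You correctly identify the structural ingredients: homogeneity of $E^\rho \downarrow \hyp^n$ reduces the heat trace to $\Vol(\mathcal F)\cdot H_\rho(t)$; for $n$ odd the equal-rank condition $\mathrm{rank}(G)=\mathrm{rank}(K)$ fails, so $SO_0(n,1)$ has no discrete series and the Plancherel formula runs only over principal series; and the Plancherel density $|c(\nu)|^{-2}$ is a polynomial in $\nu$, so the push-forward of $m_p\cdot d\mu_{\mathrm{Pl}}$ under $\nu\mapsto \kappa(\pi_\nu)+c_\rho$ is absolutely continuous. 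This immediately gives $F(\Delta_p[E^\rho],0)=0$ and a polynomial bound $F(\Delta_p[E^\rho],\lambda)\leq C\lambda^\beta$ near $0$.

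Two points you gloss over that a full proof must address: first, the principal series for $SO_0(n,1)$ are indexed not only by $\nu\in\reals$ but also by a discrete $M$-type $\sigma\in\widehat{SO(n-1)}$, and one must check that only finitely many $\sigma$ contribute to $m_p(\pi_{\nu,\sigma})$ (this follows from Frobenius reciprocity and the finiteness of $K$-types in $\Lambda^p\mathfrak p^*\otimes V_\rho$). Second, the identification of the Hodge Laplacian with a Casimir shift (Kuga's lemma, or the Matsushima--Murakami formula) is what produces the specific constant $c_\rho$, and one needs this to be explicit enough to locate the bottom of the spectrum. Your closing sentence already acknowledges that these verifications are the content of the cited reference, so your proposal is really a summary of that argument rather than an independent proof---which is appropriate given that the paper itself treats the result as imported.
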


We want to show that the same holds true for the Hilbert $\vnN(\Gamma)$-cochain complex $\Omega_{(2)}^*(T_{1}, E^\rho_{T_1})$. 

\begin{proposition}\label{l2nov2} For each $R > 0$, the Hilbert $\vnN(\Gamma)$-cochain complex $\Omega_{(2)}^*(T_{R}, E^\rho_{T_R})$  has trivial cohomology and positive Novikov-Shubin invariants. The same holds true for the complex $\Omega_{(2)}^*(\partial T_R, E^\rho_{\partial T_R})$. \end{proposition}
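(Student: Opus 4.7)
The plan is to exploit the warped-product end structure of $T_R$ together with the induction principle to reduce the claim to Proposition \ref{novpos1}. First I would use the isometric diffeomorphism of principal $\Gamma$-bundles $T_R \cong \coprod_{j=1}^k \Gamma \times_{\Gamma_0^j} T_R^j$ to split the Hilbert $\vnN(\Gamma)$-cochain complex $\Omega_{(2)}^*(T_R, E^\rho_{T_R})$ as an orthogonal direct sum of subcomplexes, one per cusp type. Since $L^2$-Betti numbers and Novikov-Shubin invariants are additive under such direct sums, it suffices to handle each summand $\Omega_{(2)}^*(\Gamma \times_{\Gamma_0^j} T_R^j, E^\rho)$ separately.

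Next I would pass from the $\Gamma$-equivariant picture to the $\Gamma_0^j$-equivariant picture via induction. Both $\Gamma$ and $\Gamma_0^j$ act cocompactly on the respective spaces (on $T_R$ by construction of the end decomposition, and on $T_R^j \cong [R,R+1]\times\reals^{n-1}$ because $\Gamma_0^j\cong\mathbb Z^{n-1}$ acts cocompactly on the second factor). Hence Corollary \ref{DeRhCor} identifies analytic with topological $L^2$-Betti numbers and Novikov-Shubin invariants on both sides, while Lemma \ref{indprinc}, applied to $\Gamma_0^j<\Gamma$, $M_0=T_R^j$, and $\rho_0=\rho|_{\Gamma_0^j}$, transfers the topological invariants from the $\Gamma$-side to the $\Gamma_0^j$-side. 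The problem thereby reduces to showing $L^2$-acyclicity and positive Novikov-Shubin invariants for the Hilbert $\vnN(\Gamma_0^j)$-cochain complex $\Omega_{(2)}^*(T_R^j, E^\rho_{T_R^j})$.

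At this point Proposition \ref{novpos1} applies directly: take $M=[R,R+1]$ (compact and simply-connected with boundary), $k=n-1$, and warping function $f(t)=e^{-2t}$. The space $T_R^j$ is exactly the resulting warped product, $\Gamma_0^j\cong\mathbb Z^{n-1}$ acts trivially on $[R,R+1]$ and by standard translations on $\reals^{n-1}$, and the proposition then delivers the vanishing of all twisted $L^2$-Betti numbers and the positivity of all Novikov-Shubin invariants, applied in particular to $\rho|_{\Gamma_0^j}$. The boundary assertion is handled in parallel: each connected component of $\partial T_R^j$ is isometric to $\reals^{n-1}$ with a scaled flat Euclidean metric on which $\Gamma_0^j$ acts by translations, and Proposition \ref{novpos1} with $M$ a one-point space and $k=n-1$ yields the analogous conclusion for $\Omega_{(2)}^*(\partial T_R, E^\rho_{\partial T_R})$.

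The only obstacle I anticipate is bookkeeping in the second step: one must verify that the decomposition $T_R\cong\coprod_j\Gamma\times_{\Gamma_0^j}T_R^j$ lifts compatibly to the flat Hermitian bundle $E^\rho_{T_R}$, and that the resulting identification of induced de Rham complexes matches the $L^2$-cochain complex induction underlying Lemma \ref{indprinc}. This reduces to the $G$-equivariance of $E^\rho\downarrow\hyp^n$ and the fact that each $\Gamma_0^j$ preserves $T_R^j$, so that the pullback of $E^\rho$ through $\Gamma\times_{\Gamma_0^j}T_R^j\to T_R$ identifies canonically with $\Gamma\times_{\Gamma_0^j}E^\rho_{T_R^j}$; this is the analytic analogue of the cellular induction that underlies the induction principle.
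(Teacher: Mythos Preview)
Your proposal is correct and follows essentially the same route as the paper's proof: decompose $T_R$ into induced pieces $\Gamma\times_{\Gamma_0^j}T_R^j$, use Corollary~\ref{DeRhCor} to pass to topological invariants, apply the induction principle (Lemma~\ref{indprinc}) to reduce to the $\Gamma_0^j\cong\mathbb Z^{n-1}$-action on $T_R^j\cong[R,R+1]\times\reals^{n-1}$, and invoke Proposition~\ref{novpos1}. Your treatment is in fact more explicit than the paper's (which compresses these steps into two sentences), and your anticipated bookkeeping concern about the bundle-level compatibility of the induction is legitimate but, as you note, resolved by the $G$-equivariance of $E^\rho$.
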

\begin{proof} Since the proof method does not take into account the specific choice of $R$, we will prove it only for the case $R = 1$. For each $1 \leq j \leq k$, let $T_1^j$ be the complete submanifolds of $T_1$ with $\Gamma_0^j \coloneqq \{\gamma \in \Gamma: \gamma. T_1^j = T_1^j \}$ the stabilizer of $T_1^j$, so that we have a decomposition \begin{equation*} T_1 \cong \coprod_{j=1}^k \Gamma \times_{\Gamma_0^j} T_1^j, \end{equation*} as detailed in Section $2.3$. In the same section, we have shown that $\Gamma_0^j \subset \Isom^+(T_1^j)$ is a uniform lattice, isomorphic to $\mathbb Z^{d-1}$. Therefore, the result follows from the equality of analytic and topological $L^2$-Betti numbers and Novikov-Shubin invariants presented in Corollary \ref{DeRhCor}, followed by the induction principle from Lemma \ref{indprinc} and finally Propositon \ref{novpos1}. For $\partial T_1$, we replace $T_1^j$ by $\partial T_1^j = T_1^j \cap \partial T_1 \cong \reals^{n-1}$ and proceed completely analogously as above (using that $\Gamma_0^j$ restricts to a cocompact lattice on $\partial T_1^j$). 
\end{proof}

Notice that the previous proof does {\bfseries not} relate the different positive Novikov-Shubin invariants from the different complexes $\Omega_{(2)}^*(T_{R}, E^\rho_{T_R})$. {\itshape A priori}, these do depend on $R$. Showing that there exists a positive uniform lower bound requires a little more work:


\begin{proposition}\label{essent} There exists constants $\epsilon, \alpha > 0$, such that for all $R \geq 1$  and all $0 \leq p \leq m$ the following hold true \begin{enumerate} 
\item  For all $R \geq 1$ we have \begin{equation} F(\Delta_p[E^\rho_{T_R}],\lambda) \leq F(\Delta_p[E^\rho_{T_1}],\lambda). \end{equation}
\item For all $\lambda < \epsilon^{-1}$, we have \begin{align*} F_p(D_p[E^\rho],\lambda) < \epsilon \cdot \lambda^\alpha, \\
F_p(D_p[E^\rho_{T_R}], \lambda) < \epsilon \cdot \lambda^\alpha. \end{align*}
In particular, both $D_p[E^\rho]$ and $D_p[E^\rho_{T_R}]$ are Fredholm at $p$ and have vanishing $p$-th cohomology. 
\end{enumerate}
\end{proposition}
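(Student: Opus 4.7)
For part $(1)$, the plan is to exploit the bundle isometry $F_R \colon E^\rho_{T_R} \to E^\rho_{T_0}$ from Lemma \ref{BUNDISO3}, which on each component $T_R^j \cong [R,R+1] \times \reals^{n-1}$ is the hyperbolic map $(t,x) \mapsto (t-R,\, e^{-R}x)$. The decisive observation is that the $\Gamma_0^j$-action on $T_R^j$ is independent of $R$ — trivial on the $t$-factor and given by the same fixed cocompact Euclidean action on $\reals^{n-1}$ — so that $[R,R+1] \times \mathcal F_0$ is a $\Gamma_0^j$-fundamental domain for every $R$, where $\mathcal F_0 \subset \reals^{n-1}$ is a Euclidean fundamental domain for $\Gamma_0^j$. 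Pulling $\chi_{[0,\lambda]}(\Delta_p[E^\rho_{T_R^j}])$ back through $F_R$ and using $\reals^{n-1}$-translation invariance of the Laplacian on $T_0^j$, the diagonal kernel $\tr\chi_{[0,\lambda]}(\Delta_p[E^\rho_{T_R^j}])(x,x)$ depends only on the height $t-R$ via one fixed function $g_{p,\lambda}\colon [0,1] \to \reals$ that is independent of $R$. Integrating against the hyperbolic volume form $e^{-(n-1)t}\,dt\,dx$ then yields the explicit identity
\[
\mathrm{tr}_{\Gamma_0^j}\bigl(\chi_{[0,\lambda]}(\Delta_p[E^\rho_{T_R^j}])\bigr) \;=\; e^{-(n-1)(R-1)}\,\mathrm{tr}_{\Gamma_0^j}\bigl(\chi_{[0,\lambda]}(\Delta_p[E^\rho_{T_1^j}])\bigr),
\]
whose prefactor is $\le 1$ for $R \geq 1$. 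Summing over $1 \leq j \leq k$ via trace-multiplicativity under $\Gamma_0^j < \Gamma$ induction (the spectral density analogue of \eqref{bnoveq3}--\eqref{bnoveq4}) upgrades this to the desired inequality on $T_R$.

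For part $(2)$, everything reduces to the bound established in part $(1)$. Propositions \ref{l2nov1} and \ref{l2nov2} provide $L^2$-acyclicity together with positivity of the Novikov-Shubin invariants for $\Omega_{(2)}^*(\hyp^n,E^\rho)$ and $\Omega_{(2)}^*(T_1,E^\rho_{T_1})$. Acyclicity identifies $\Delta_p$ with $\Delta_p^\perp$, and positivity of the Novikov-Shubin invariants then translates into polynomial bounds $F(\Delta_p[E^\rho],\lambda) \le C\lambda^{2\beta}$ and $F(\Delta_p[E^\rho_{T_1}],\lambda) \le C\lambda^{2\beta}$ on a common neighborhood of $0$, for some $C,\beta > 0$. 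Part $(1)$ transfers the second bound uniformly to every $T_R$, $R \geq 1$. Applying Proposition \ref{density} yields $F_p(L_p[\cdot],\lambda) \leq F(\Delta_p[\cdot],\sqrt\lambda) \leq C\lambda^\beta$, and finally Proposition \ref{equiv} — whose constants $C_1, C_2$ are themselves independent of $R \geq 1$ — gives
\[
F_p(D_p[\cdot],\lambda) \;\leq\; F_p(L_p[\cdot], C_1\sqrt 2\,\lambda) \;\leq\; C\,(C_1\sqrt 2)^\beta\,\lambda^\beta.
\]
Setting $\alpha := \beta$ and choosing $\epsilon$ large enough to absorb the multiplicative constants and to enforce $\epsilon^{-1} \leq \min\{C_2,\,1/(C_2\sqrt 2)\}$ produces both polynomial estimates at once. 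Fredholmness of $D_p[E^\rho]$ and $D_p[E^\rho_{T_R}]$ at $p$ then follows from the finiteness of $F_p(D_p,\lambda)$ — itself guaranteed by Proposition \ref{equiv} together with the already established Fredholmness of the $L^2$-de Rham complex — and the vanishing of the reduced $p$-th cohomology is immediate from $F_p(D_p, 0^+) = 0$.

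The main obstacle is part $(1)$. The bundle isometry $F_R$ is \emph{not} $\Gamma$-equivariant, so one cannot naively transport spectral data between $T_R$ and $T_1$ at the level of $\mathcal N(\Gamma)$-modules. The insight that unblocks this is that $F_R$ is secretly $\Gamma_0^j$-equivariant on each slab because the $\Gamma_0^j$-action there is $R$-independent; the comparison of $\mathrm{tr}_{\Gamma_0^j}$ then collapses to a pure hyperbolic volume scaling, encoded in the factor $e^{-(n-1)(R-1)}$. This is the $\rho$-twisted analogue of \cite[Lemma 5.11]{Lueck:hyp}, with the twist causing no extra difficulty because the $G$-equivariance of $h_\rho$ promotes $F_R$ to an honest Hermitian bundle isometry.
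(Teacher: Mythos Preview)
Your proof is correct and follows essentially the same route as the paper: part (1) via the bundle isometry $F_R$ from Lemma \ref{BUNDISO3} together with the height-only dependence of the diagonal kernel, and part (2) via Propositions \ref{l2nov1}, \ref{l2nov2}, \ref{density}, \ref{equiv} and part (1), exactly as in the text. The only difference is cosmetic: in (1) you extract the explicit scaling factor $e^{-(n-1)(R-1)}$, whereas the paper argues by choosing fundamental domains with $F_R(\mathcal D_R)\subseteq \mathcal D_1$ and using non-negativity of the integrand.

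One small correction to your commentary: $F_R$ is \emph{not} $\Gamma_0^j$-equivariant, since on the base it rescales the $\reals^{n-1}$-factor by $e^{-R}$ and therefore does not commute with the Euclidean translations in $\Gamma_0^j$. Fortunately your actual argument never uses this; it only uses that $F_R$ is a flat bundle isometry (so the Laplacians are intertwined) and that the kernel on $T_0^j$ is invariant under $\reals^{n-1}$-translations, both of which are correct.
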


\begin{proof}
\begin{enumerate}
\item There is a flat bundle isometry $F_R: E^\rho_{T_R} \downarrow {T}_R \to E^\rho_{T_1} \downarrow {T}_1$, as defined in Lemma \ref{BUNDISO3}. Consequently, by Equation \ref{equivariance}, we have for any $x \in T_R$ and any $\lambda \geq 0$, that 
\begin{equation} \tr( \chi_{[0,\lambda^2]}(\Delta_p[{E}_0])(F_R(x),F_R(x))) =  \tr(\chi_{[0,\lambda^2]}(\Delta_p[E^\rho_{T_R}])(x,x)). \end{equation}
We may choose fundamental domains $\mathcal D_R \subseteq {T}_R$ and $\mathcal D_1 \subseteq {T}_1$ for the respective $\Gamma$-actions, satisfying $F_R(\mathcal D_R) \subseteq \mathcal D_1$. Therefore, we have 
\begin{align*} & F(\Delta_p[E^\rho_{T_R}],\lambda) = \vntr(\chi_{[0,\lambda^2]}(\Delta_p[E^\rho_{T_R}]) ) 
= \int_{\mathcal D_R} \tr(\chi_{[0,\lambda^2]}(\Delta_p[E^\rho_{T_R}])(x,x))) dx \\ & = \int_{\mathcal F_R(\mathcal D_R)} \tr( \chi_{[0,\lambda^2]}(\Delta_p[E^\rho_{T_1}])(F_R(x),F_R(x))) dx \\&
\leq \int_{\mathcal D_1} \tr( \chi_{[0,\lambda^2]}(\Delta_p[E^\rho_{T_1}])(F_R(x),F_R(x))) dx =  F(\Delta_p[E^\rho_{T_1}],\lambda). \end{align*}
\item Define $\beta \coloneqq \min \{ \alpha(\Delta_p[X]):  0 \leq p \leq m, X \in \{E^\rho,E^\rho_{T_1} \} \}$. Propositions \ref{l2nov1} and \ref{l2nov2} imply that we have $\beta > 0$, as well as both $\Delta_p[E^\rho] = \Delta_p^\perp[E^\rho]$ and $\Delta_p[E^\rho_{T_1}] = \Delta_p^\perp[E^\rho_{T_1}]$. For $E^\rho$, we can apply Propositions \ref{equiv} and \ref{density} to find a constant $c \geq 1$, such that for all $0 \leq p \leq m$, and all $\lambda < c^{-1}$ we have
\begin{equation*}  F_p(D_p[E^\rho],\lambda) \leq F_p(L_p[E^\rho], c\lambda) \leq F(\Delta_p[E^\rho], \sqrt{c \lambda} ) < c^\alpha \lambda^\alpha. \end{equation*}
For $E^\rho_{T_R}$, we can use the same argument, along with assertion $(2)$, similarly yielding 
\begin{align*}& F_p(D_p[E^\rho_{T_R}], \lambda) \leq F_p(L_p[E^\rho_{T_R}], c\lambda) \leq F_p(\Delta_p[E^\rho_{T_R}], \sqrt{c\lambda}) \\
& \leq F_p(\Delta_p[E^\rho_{T_1}], \sqrt{c\lambda}) \leq c^{\alpha/2} \lambda^{\alpha/2}. \end{align*} 
Setting $\epsilon \coloneqq \max \{ c^{\alpha/2},c\}$ and $\beta = \alpha/2 > 0$, we obtain the result.
\end{enumerate}
\end{proof}

For $R > 1$, denote by \begin{align} i_{M_R}: M_R \to \hyp^n, \\ i_{C_R}: C_R \to \hyp^n, \\ i_{({T}_{R},-)}: T_{R} \to M_{R+1}, \\ i_{({T}_{R},+)}: T_{R} \to C_{R}. \end{align}
the respective smooth inclusion maps. Each of these induces a $\Gamma$-invariant map between the corresponding twisted de Rham complexes, bounded with norm $1$. The proof for the scalar-values case of the following important theorem, presented in the reference, carries over to our situation of bundle-valued forms without further modification: 
\begin{lemma}\cite[Lemma 5.14]{Lueck:3-mflds}\label{exactsob} For any $R > 1$ and any $0 \leq p \leq m$, the sequence of morphisms of Hilbert $\vnN(\Gamma)$-cochain complexes \begin{equation} 0 \rightarrow D_p[E^\rho] \xrightarrow{j_R} D_p[E^\rho_{M_R}] \oplus D_p[E^\rho_{C_{R-1}}] \xrightarrow{q_R} D_p[E^\rho_{T_{R-1}}] \rightarrow 0 \end{equation} 
is exact. Here, for smooth forms, we have \begin{align} & j_R\omega \coloneqq i^*_{{M}_R}\omega \oplus i^*_{C_{R-1}} \omega,\\
& q_R(\omega_1 \oplus \omega_2) \coloneqq i^*_{{T}_{(R-1,-)}} \omega_1 - i^*_{{T}_{(R-1,+)}} \omega_2. \end{align}
\end{lemma}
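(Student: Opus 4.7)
The strategy will be to verify the six standard properties for a short exact sequence of Hilbert $\vnN(\Gamma)$-cochain complexes: $j_R$ and $q_R$ are bounded $\Gamma$-equivariant chain maps, $q_R \circ j_R = 0$, $j_R$ is injective, $\ker(q_R) \subseteq \mathrm{im}(j_R)$, and $q_R$ is surjective. The essential geometric input is the Mayer--Vietoris-style decomposition $\hyp^n = M_R \cup C_{R-1}$ with overlap $M_R \cap C_{R-1} = T_{R-1}$, together with the critical fact that $\partial M_R$ sits strictly inside $C_{R-1}$ and $\partial C_{R-1}$ sits strictly inside $M_R$, so that all gluing and cutting can be carried out inside the open codimension-$0$ collar $T_{R-1}$.

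First I would verify boundedness of the restriction maps at each Sobolev level used in $D_p[\cdot]$. The $\sob_k$-norms are built from $L^2$-norms of $\omega, d\omega, \delta\omega$ together with a $\sob_{k+1/2}$-trace term on the boundary; upon restriction to a codimension-$0$ closed submanifold, each $L^2$-quantity can only decrease, while any newly appearing piece of boundary produces a trace term controlled by the standard Sobolev trace theorem. Commutation with $d$ and $\Gamma$-equivariance are tautological on smooth compactly supported forms and extend to the completions. The chain relation $q_R \circ j_R = 0$ is immediate, since composing either of the two restrictions to $T_{R-1}$ agrees with the direct restriction from $\hyp^n$, and the two contributions cancel by the built-in sign in the definition of $q_R$. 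Injectivity of $j_R$ follows from $\hyp^n = M_R \cup C_{R-1}$.

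For exactness in the middle I would patch: given $(\omega_1,\omega_2)$ with $\omega_1|_{T_{R-1}} = \omega_2|_{T_{R-1}}$ in the appropriate Sobolev norm, the glued section $\omega$ on $\hyp^n$ lies in the global Sobolev space precisely because $\omega_1$ and $\omega_2$ agree on an open neighborhood of both $\partial M_R$ and $\partial C_{R-1}$, so no distributional boundary mass is created and the weak derivatives $d\omega, \delta\omega$ coincide piecewise in $L^2$. This is just the local character of Sobolev regularity and propagates to the $\sob_2$-slot via its additional trace component; by construction $j_R \omega = (\omega_1, \omega_2)$.

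For surjectivity of $q_R$ I would introduce a smooth $\Gamma$-invariant cut-off $\phi: \hyp^n \to [0,1]$ with $\phi \equiv 0$ on $M_{R-1}$ and $\phi \equiv 1$ on $C_R$; such a $\phi$ exists as a function of the warped-product $[0,\infty)$-coordinate on each cusp, automatically $\Gamma$-invariant thanks to the end structure recalled in Section \ref{FLATRHO}. Given $\sigma$ in the appropriate slot over $T_{R-1}$, I set $\omega_1 \coloneqq \phi\sigma$ extended by zero to $M_R$ and $\omega_2 \coloneqq -(1-\phi)\sigma$ extended by zero to $C_{R-1}$; each extension crosses the \emph{interior} boundary of its support, so smoothness is preserved, and a direct computation gives $q_R(\omega_1, \omega_2) = \phi\sigma + (1-\phi)\sigma = \sigma$. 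I expect the main technical subtlety to be verifying that multiplication by $\phi$ preserves the half-integer fractional Sobolev trace norms built into $\sob_{k+1}$, but this is routine since $\phi$ is smooth with bounded derivatives and the trace operator is continuous on fractional Sobolev spaces, so no genuine obstacle arises.
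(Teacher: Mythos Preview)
Your proposal is correct and reconstructs precisely the standard Mayer--Vietoris argument that the paper defers to \cite[Lemma 5.14]{Lueck:3-mflds}; the paper itself gives no proof beyond remarking that the scalar-valued argument in that reference carries over verbatim to the bundle-valued case. Your outline---boundedness of restrictions via the structure of the $\sob_k$-norms, injectivity from the cover $\hyp^n = M_R \cup C_{R-1}$, exactness in the middle by gluing across the open overlap $T_{R-1}$, and surjectivity via a $\Gamma$-invariant cusp-coordinate cut-off---is exactly that argument.
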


\begin{lemma} 
\label{exactbound} There exists a constant $C > 0$, such that for all $R > 1$ and all $0 \leq p \leq m$, we have
\begin{equation} F_p( D_p[E^\rho_{M_R}],\lambda) \leq F_p(D_p[E^\rho],C \cdot \lambda^{1/4}) + F_p(D_p[E^\rho_{T_{R-1}}],C \cdot \lambda^{1/2}) \hspace{.3cm} \text{for} \; 0 \leq \lambda \leq C^{-1}. \end{equation}
\end{lemma}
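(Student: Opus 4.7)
The plan is to apply the general spectral-density estimate of Proposition \ref{exactleq} directly to the Mayer--Vietoris-type short exact sequence of Hilbert $\vnN(\Gamma)$-cochain complexes furnished by Lemma \ref{exactsob},
\[ 0 \longrightarrow D_p[E^\rho] \xrightarrow{j_R} D_p[E^\rho_{M_R}] \oplus D_p[E^\rho_{C_{R-1}}] \xrightarrow{q_R} D_p[E^\rho_{T_{R-1}}] \longrightarrow 0. \]
In the notation of Proposition \ref{exactleq} I take $C_* = D_p[E^\rho]$ and $E_* = D_p[E^\rho_{T_{R-1}}]$; Proposition \ref{essent}(2) guarantees that both complexes are Fredholm at $p$ and satisfy $b_p(E_*) = b_{p+1}(C_*) = 0$, so the hypotheses are met. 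Applying the proposition then yields the direct-sum estimate
\[ F_p\bigl(D_p[E^\rho_{M_R}] \oplus D_p[E^\rho_{C_{R-1}}],\lambda\bigr) \leq F_p(D_p[E^\rho], \alpha_C\lambda^{1/2}) + F_p(D_p[E^\rho_{T_{R-1}}], \alpha_E\lambda^{1/2}) \]
for $\lambda \leq \alpha_1$, with constants $\alpha_C, \alpha_E, \alpha_1$ given explicitly in terms of the operator norms of $j_R, q_R$ and the differential of the middle complex.

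The main technical obstacle — and the heart of the argument — is to show that these three constants can be chosen uniformly in $R \geq 1$; otherwise the bound would degenerate in the limit $R \to \infty$ we eventually wish to take. The norms $\|j_p\|, \|j_{p+1}\| \leq \sqrt{2}$ follow from the overlap relation $M_R \cup C_{R-1} = \hyp^n$, and $\|j_{p+1}^{-1}\| \leq 1$ on the image because any $L^2$-form on $\hyp^n$ is recovered from its restrictions to $M_R$ and $C_{R-1}$. The norms $\|q_p\|, \|q_{p+1}\| \leq 2$ and the norm $\|d_p\| \leq 1$ of the middle differential are direct consequences of the definitions of the Sobolev norms. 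The delicate quantity is the right-inverse norm $\|q_p^{-1}\|$: for every $\omega \in \sob_1^p(T_{R-1},E^\rho_{T_{R-1}})$ one must produce a preimage $(\omega_1,\omega_2)$ of uniformly controlled norm. Here I would exploit the product structure $T_{R-1}^j \cong [R-1,R] \times \reals^{n-1}$ from the beginning of Section \ref{FLATRHO}, combined with the flat bundle isometries of Lemma \ref{BUNDISO3}, to transport everything to the fixed model $T_0^j \cong [0,1] \times \reals^{n-1}$, construct a single $R$-independent cutoff extension of $\omega$ into a slightly thicker horo-collar of $T_{R-1}$ inside $M_R$ (setting $\omega_2 = 0$), and then transport back. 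Homogeneity of $E^\rho \downarrow \hyp^n$ and the translation-invariance of the warped product metric ensure that the Sobolev norm of this extension is insensitive to $R$.

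With all constants uniform, additivity $F_p(A \oplus B,\lambda) = F_p(A,\lambda) + F_p(B,\lambda)$ together with non-negativity of the dropped summand $F_p(D_p[E^\rho_{C_{R-1}}],\lambda)$ gives
\[ F_p(D_p[E^\rho_{M_R}],\lambda) \leq F_p(D_p[E^\rho], \alpha_C\lambda^{1/2}) + F_p(D_p[E^\rho_{T_{R-1}}], \alpha_E\lambda^{1/2}). \]
Since $F_p$ is non-decreasing and $\lambda^{1/2} \leq \lambda^{1/4}$ whenever $\lambda \leq 1$, this sharper bound already subsumes the claimed estimate with $\lambda^{1/4}$ in place of $\lambda^{1/2}$ on the first term, after absorbing $\alpha_C, \alpha_E$ into a single uniform constant $C$ and restricting to $\lambda \leq C^{-1} \leq \min\{1,\alpha_1\}$.
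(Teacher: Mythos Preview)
Your proposal is correct and follows essentially the same route as the paper: apply Proposition~\ref{exactleq} to the short exact sequence of Lemma~\ref{exactsob}, verify its hypotheses via Proposition~\ref{essent}, and argue that the resulting constants are uniform in $R$ by invoking the flat bundle isometries of Lemma~\ref{BUNDISO3} (the paper defers this last step to \cite[Lemma 6.6]{Lueck:hyp}). Your explicit handling of the additivity/drop-the-$C_{R-1}$-summand step and of the passage from $\lambda^{1/2}$ to $\lambda^{1/4}$ via monotonicity actually fills in details the paper's proof leaves implicit; note, incidentally, that for the hypothesis of Proposition~\ref{exactleq} you only need $b_p(E_*)=0$, which is exactly what Proposition~\ref{essent}(2) provides.
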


\begin{proof} Consider the exact sequence \begin{equation} 0 \rightarrow D_p[E^\rho] \xrightarrow{j_R} D_p[E^\rho_{M_R}] \oplus D_p[E^\rho_{C_{R-1}}] \xrightarrow{q_R} D_p[E^\rho_{T_{R-1}}] \rightarrow 0 \end{equation} from Lemma \ref{exactsob}. Because of Proposition \ref{essent}, the outer two complexes are Fredholm at $p$ and, moreover, have vanishing $p$-cohomology. In particular, we can apply Proposition \ref{exactleq} to obtain that  \begin{equation}
F_p(D_p[E^\rho_{M_R}],\lambda) \leq F_p(D_p[E^\rho],c_1(R) \cdot \lambda ) + F_p(D_p[E^\rho_{T_{R-1}}], c_1(R) \cdot \lambda), \end{equation} for all $\lambda < c_2(R)$, where $c_1(R)$ and $c_2(R)$ are constants given by rational expressions of the norms of $q_R,j_R$, the differential on $D_p[E^\rho_{M_R}] \oplus D_p[E^\rho_{C_{R-1}}]$, and their respective inverses. Using the flat bundle isometries given in Lemma \ref{BUNDISO3}, one can now proceed analogously as in \cite[Lemma 6.6]{Lueck:hyp}  to show that these norms are bounded from above by universal constants independent of $R > 1$, thus proving the lemma. \end{proof}

\begin{proposition}\label{specbound} There exists constants $C,\beta >0$, such that for all $0 \leq p \leq m$, the following hold: \begin{enumerate} 
\item The Hilbert $\vnN(\Gamma)$-cochain complex $\Omega^*_{(2)}(M_R,E^\rho_{M_R})$ has vanishing $p$-th homology. Equivalently, we have \begin{equation} \Delta_p[E^\rho_{M_R}] = \Delta_p^\perp[E^\rho_{M_R}]. \end{equation}
\item For all $R > 1$, we have a uniform bound on the spectral density functions as follows
\begin{equation} F_p(\Delta_p[E_{M_R}], \lambda) \leq C  \lambda^{\beta}  \hspace{.3cm} \text{for} \; 0 \leq \lambda \leq C^{-1}. \end{equation}
\end{enumerate}
\end{proposition}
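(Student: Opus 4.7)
My plan is to derive both assertions sequentially, exploiting the fact that the principal $R$-uniformity work has already been done in the preceding lemmas. Since assertion (2) requires acyclicity in order to apply Proposition \ref{density}, I would first settle (1) by extracting an $R$-uniform polynomial bound on the spectral density function of the de Rham complex $L_p[E^\rho_{M_R}]$, and then feed that bound back through Proposition \ref{density} to produce the spectral estimate on $\Delta_p[E^\rho_{M_R}]$ itself.

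First I would chain Lemma \ref{exactbound} with the polynomial bounds of Proposition \ref{essent}.(2). For $\lambda$ small enough that both $K\lambda^{1/4}$ and $K\lambda^{1/2}$ lie below $\epsilon^{-1}$ (with $K$, $\epsilon$, $\alpha$ the respective constants of those statements), Lemma \ref{exactbound} produces the estimate
\begin{equation*}
F_p(D_p[E^\rho_{M_R}],\lambda) \leq \epsilon(K\lambda^{1/4})^\alpha + \epsilon(K\lambda^{1/2})^\alpha,
\end{equation*}
which collapses, for $\lambda \leq 1$, to an $R$-independent polynomial bound of the form $A \lambda^{\alpha/4}$. Crucially, each of the ingredient constants is already known to be independent of $R>1$.

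Next I would transfer this bound from the Sobolev complex $D_p[E^\rho_{M_R}]$ to the de Rham complex $L_p[E^\rho_{M_R}]$ via Proposition \ref{equiv}, whose constants $C_1, C_2$ were likewise verified to be $R$-independent. Substitution gives $F_p(L_p[E^\rho_{M_R}],\lambda) \leq A' \lambda^{\alpha/4}$ for sufficiently small $\lambda$, uniformly in $R > 1$. Evaluating at $\lambda = 0$ then shows that every $L^2$-Betti number $b_p^{(2)}(\Omega^*_{(2)}(M_R,E^\rho_{M_R}))$ vanishes, which gives assertion (1) and in particular the identification $\Delta_p[E^\rho_{M_R}] = \Delta_p^\perp[E^\rho_{M_R}]$.

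With acyclicity in hand, Proposition \ref{density} applies and decomposes $F(\Delta_p[E^\rho_{M_R}],\sqrt{\lambda})$ as $F_p(L_p[E^\rho_{M_R}],\lambda) + F_{p-1}(L_{p-1}[E^\rho_{M_R}],\lambda)$. Both summands are controlled by the bound from the previous step, uniformly in $R$, so reparametrising $\mu = \sqrt{\lambda}$ turns the sum into the desired polynomial estimate on $F(\Delta_p[E^\rho_{M_R}],\mu)$, with exponent $\beta = \alpha/2$ and a constant absorbing $C_1$, $K$, $\epsilon$. I expect no serious difficulty beyond careful bookkeeping: the genuine obstacle of ensuring $R$-independence of the constants was already discharged by the flat bundle isometries from Lemma \ref{BUNDISO3}, which transport the collars of the various $M_R$ onto a common model and thereby prevent the estimates from deteriorating as $R \to \infty$.
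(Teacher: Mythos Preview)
Your proposal is correct and follows essentially the same route as the paper: combine Lemma \ref{exactbound} with the $R$-uniform polynomial bounds on the Sobolev complexes, pass to $L_p$ via Proposition \ref{equiv}, read off acyclicity at $\lambda=0$, and then assemble the Laplacian bound through Proposition \ref{density}. The only cosmetic difference is that you invoke Proposition \ref{essent}.(2) directly to bound $F_p(D_p[E^\rho],\cdot)$ and $F_p(D_p[E^\rho_{T_{R-1}}],\cdot)$, whereas the paper re-expands that estimate by passing through $L_p$ and $\Delta_p^\perp$ for $E^\rho$ and $E^\rho_{T_{R-1}}$ before applying the Novikov--Shubin bound; your shortcut is slightly cleaner but yields the same conclusion with the same exponent up to a harmless factor.
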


\begin{proof} First, observe that the $p$-th cohomology of $\Omega^*(M_R,E^\rho_{M_R})$ is isomorphic to the $p$-th cohomology of $L_p[E^\rho_{M_R}]$. To show that the latter is trivial, we only need to show that \begin{equation}\label{l2triv} F_p(L_p[E^\rho_{M_R}],0) = 0. \end{equation} Using Lemma \ref{exactbound}, together with both Proposition \ref{equiv} and Proposition \ref{density}, we obtain constants $\alpha,c,C_1,C_2 > 0$ independent of $R > 1$, such that 
\begin{align*} & F_p( D_p[E^\rho_{M_R}],\lambda) \leq F_p(D_p[E^\rho],c \lambda^{1/4}) + F_p(D_p[E^\rho_{T_{R-1}}],c  \lambda^{1/2}) \\
& \leq F_p(L_p[E^\rho],cC_1\sqrt{2}\lambda^{1/4}) + F_p(L_p[E^\rho_{T_{R-1}}],cC_1\sqrt{2}\lambda^{1/2}) \\ &
\leq F_p(\Delta_p^{\perp}[E^\rho],\sqrt{cC_1\sqrt{2}}\lambda^{1/8}) + F_p(\Delta_p^{\perp}[E^\rho_{T_{R-1}}],\sqrt{cC_1\sqrt{2}}\lambda^{1/4}). \\
& \leq 2 (cC_1\sqrt{2})^{\alpha/2} \lambda^{\alpha/8}  \end{align*} for $\lambda \leq \min \{1,c^{-1}, C_1^{-1}, C_2, \frac{1}{\sqrt{2}C_2} \}$ and all $0 \leq p \leq m$. In particular, since $F(L_p[E^\rho_{M_R}],0) \leq F_p(D_p[E^\rho_{M_R}],0)$ by Proposition \ref{estimate3}, Equation \ref{l2triv} immediately follows from the above computation. \\  Applying now again Proposition \ref{density} and Proposition \ref{equiv} together with the above inequality, we compute
\begin{align*} & F_p(\Delta_p[E^\rho_{M_R}],\lambda) =  F_p(\Delta_p^\perp[E^\rho_{M_R}],\lambda) \leq F_p(D_p[E^\rho_{M_R}],C_1\lambda^2) + F_{p-1}(D_{p-1}[E^\rho_{M_R}],C_1\lambda^2)  \\ &
\leq 4 (cC_1^{3/2}\sqrt{2})^{\alpha/2} \lambda^{\alpha/4} \end{align*} for $\lambda \leq \min \{1, c^{-1} ,C_1^{-1},C_2,\frac{1}{\sqrt{2}C_2}, C_1^{-1}C_2, \frac{1}{C_1C_2\sqrt{2}} \} \coloneqq C'$. \\ Setting $C \coloneqq \max \{ (C')^{-1}, 4(cC_1^{3/2}\sqrt{2})^{\alpha/2} \}$ and $\beta \coloneqq \alpha/4$, we obtain the desired result.
\end{proof} 

Neatly summarized, we get the following result:

\begin{corollary}\label{bddetclass} For each $R > 0$, the pair $(M_R,\rho|_{M_R})$ is $L^2$-acyclic and of determinant class. In fact, there exists a uniform constant $\beta > 0$ {\bfseries independent of $R$}, so that for each $0 \leq p \leq n$, one has $\alpha_p^{An}(M_R,\rho) \geq \beta$. \end{corollary}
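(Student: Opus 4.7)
\textbf{Proof plan for Corollary \ref{bddetclass}.} The plan is to repackage Proposition \ref{specbound} into the language of the $L^2$-invariants attached to the pair $(M_R,\rho|_{M_R})$. The acyclicity is nothing but the content of Proposition \ref{specbound}(1), so the only genuine task is to extract a positive, $R$-independent lower bound on the Novikov-Shubin invariants $\alpha_p^{An}(M_R,\rho)$; the determinant class condition will then follow for free from the general fact, recalled at the end of Subsection \ref{1.1}, that positive Novikov-Shubin invariants imply determinant class.

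First, by definition
\[ \alpha_p^{An}(M_R,\rho) \;=\; \alpha_p\bigl(\Omega^*_{(2)}(M_R,E^\rho_{M_R})\bigr) \;=\; \alpha\bigl(d^p|_{\overline{\im(d^{p-1})}^\perp}\bigr). \]
Acyclicity forces $\overline{\im(d^{p-1})}=\ker(d^p)$, so the restricted differential equals $(d^p)^\perp$, and consequently the relevant spectral density function is exactly $F_p(L_p[E^\rho_{M_R}],\lambda)$. Combining Proposition \ref{density} (which, using acyclicity so that $\Delta_p^\perp=\Delta_p$, yields $F_p(L_p[E^\rho_{M_R}],\lambda) \leq F(\Delta_p[E^\rho_{M_R}],\sqrt{\lambda})$) with the uniform estimate from Proposition \ref{specbound}(2) gives
\[ F_p\bigl(L_p[E^\rho_{M_R}],\lambda\bigr) \;\leq\; C\,\lambda^{\beta/2} \qquad \text{for all } \lambda \leq C^{-2}, \]
where $C,\beta>0$ are the constants furnished by Proposition \ref{specbound} and are independent of both $R$ and $p$.

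Taking $\liminf_{\lambda\to 0^+} \ln F_p(L_p[E^\rho_{M_R}],\lambda)/\ln \lambda$ then yields $\alpha_p^{An}(M_R,\rho) \geq \beta/2$ uniformly in $R\geq 1$ and $p$. Replacing the constant $\beta$ in the claim by $\beta/2$ (and shrinking if necessary to also accommodate the finitely many values $R\in(0,1]$ via Proposition \ref{l2nov1} and an analogous uniform argument, or equivalently noting that the arguments of Proposition \ref{specbound} already run verbatim for any fixed $R>0$) gives the assertion. The determinant class condition is then immediate, so the corollary follows. No serious obstacle arises at this stage: all the analytic heavy lifting—the exact sequence of Sobolev complexes from Lemma \ref{exactsob}, the uniform control on the transition morphisms established in Lemma \ref{exactbound}, and the bootstrapping through Propositions \ref{density} and \ref{equiv}—was already carried out in the course of proving Proposition \ref{specbound}.
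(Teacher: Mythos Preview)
Your proposal is correct and matches the paper's intent: the corollary is stated without proof immediately after Proposition \ref{specbound}, and your argument---reading off acyclicity from part (1), feeding the uniform Laplacian bound of part (2) through Proposition \ref{density} to control $F_p(L_p[E^\rho_{M_R}],\lambda)$, and then invoking the implication ``positive Novikov--Shubin $\Rightarrow$ determinant class''---is exactly the unpacking the paper leaves to the reader. Your remark about the range $R\in(0,1]$ is also appropriate; the proof of Proposition \ref{specbound} indeed goes through for any fixed $R>0$, so no separate work is needed.
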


\subsection{Proof of large-time convergence}

For the next result, choose a nested sequence \begin{equation} \dots \subset \mathcal F_{R-1} \subset \mathcal F_R \subset F_{R+1} \subset \dots \end{equation} inside $\hyp^n$,  where $\mathcal F_R$ is a compact fundamental domain for the $\Gamma$-action on $M_R$ and 
$\mathcal F \coloneqq \bigcup_{R > 1} \mathcal F_R$ is a finite-volume fundamental domain for the $\Gamma$-action on $\hyp^n$. Thus, we have in particular \begin{align} \label{volest1}\Vol(\mathcal F_R) < \Vol(\mathcal F) < \infty, \\\label{volest2} \lim_{R \to \infty} \Vol(\mathcal F \setminus \mathcal F_R) = 0. \end{align}

\begin{proposition}\label{heatconv} For all $t \geq 1$, we have \begin{equation} \lim_{R \to \infty} \vntr(e^{-t\Delta_p^\perp[E^\rho_{M_R}]}) = \vntr(e^{-t\Delta_p^\perp[E^\rho]}). \end{equation} \end{proposition}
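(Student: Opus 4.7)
The plan is to compare the two integral expressions
\[ \vntr(e^{-t\Delta_p[E^\rho]}) = \int_{\mathcal F} \tr(e^{-t\Delta_p[E^\rho]}(x,x))\,dx, \quad \vntr(e^{-t\Delta_p[E^\rho_{M_R}]}) = \int_{\mathcal F_R} \tr(e^{-t\Delta_p[E^\rho_{M_R}]}(x,x))\,dx \]
by splitting each fundamental domain into a ``bulk'' region well inside $M_R$ (where the Principle of not feeling the boundary forces the two kernels to nearly coincide) and a ``peripheral'' region near $\partial M_R$ or outside $M_R$ (whose volume is controlled by (\ref{volest2})). First, by Proposition \ref{l2nov1} and Corollary \ref{bddetclass}, both $\Omega_{(2)}^*(\hyp^n, E^\rho)$ and $\Omega_{(2)}^*(M_R, E^\rho_{M_R})$ are $L^2$-acyclic, so $\Delta_p^\perp = \Delta_p$ in either case and it suffices to prove the statement without the perpendicular superscripts.

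For $R \geq 2$, decompose
\begin{align*}
\vntr(e^{-t\Delta_p[E^\rho]}) - \vntr(e^{-t\Delta_p[E^\rho_{M_R}]}) &= \int_{\mathcal F_{R/2}} \tr\!\bigl(e^{-t\Delta_p[E^\rho]}(x,x) - e^{-t\Delta_p[E^\rho_{M_R}]}(x,x)\bigr)\,dx \\
&\quad + \int_{\mathcal F \setminus \mathcal F_{R/2}} \tr(e^{-t\Delta_p[E^\rho]}(x,x))\,dx \\
&\quad - \int_{\mathcal F_R \setminus \mathcal F_{R/2}} \tr(e^{-t\Delta_p[E^\rho_{M_R}]}(x,x))\,dx.
\end{align*}
From the explicit cusp coordinates recalled at the start of Section \ref{FLATRHO}, any point $x \in M_{R/2}$ satisfies $d_{M_R}(x) = d(x, \hyp^n \setminus M_R) \geq R/2 \geq 1$. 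Thus, applying Theorem \ref{heat}.(1) with $N = M_R$, $k = 0$, $D = 1$, and $x_0 = y_0 = x$, the operator norm of the integrand of the first summand is bounded by $C_0(1) e^{-R/(\kappa t)}$. Since the fibers have finite rank $\binom{n}{p}\dim(\rho)$, the same bound controls the trace up to a dimension constant, and integrating over $\mathcal F_{R/2} \subseteq \mathcal F$ (of finite volume by (\ref{volest1})) yields an upper bound of order $\Vol(\mathcal F) \cdot e^{-R/(\kappa t)}$, which tends to $0$ as $R \to \infty$.

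For the remaining two integrals, apply Theorem \ref{heat}.(2) with $t_0 = 1$ to obtain a constant $c(1)$ bounding the operator norms $\|e^{-t\Delta_p[E^\rho]}(x,x)\|$ and $\|e^{-t\Delta_p[E^\rho_{M_R}]}(x,x)\|$ uniformly for all $t \geq 1$, all $R$, and all relevant $x$. Again passing from operator norm to trace via finite fiber rank, both trace integrands are uniformly bounded, so the second and third summands are bounded in absolute value by a constant times $\Vol(\mathcal F \setminus \mathcal F_{R/2})$. By (\ref{volest2}) this volume tends to $0$ as $R \to \infty$, completing the proof. The only subtle point in the argument is arranging the cutoff at $R/2$ so that the exponential decay in Theorem \ref{heat}.(1) kicks in on a region of uniformly bounded volume, which is exactly what the nested exhaustion and (\ref{volest1})--(\ref{volest2}) provide.
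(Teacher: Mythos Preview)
Your proof is correct and follows essentially the same route as the paper: both split the trace difference at $\mathcal F_{R/2}$, apply Theorem~\ref{heat}.(1) on the bulk (your exponent $e^{-R/(\kappa t)}$ is in fact what the theorem literally gives), and use Theorem~\ref{heat}.(2) together with \eqref{volest1}--\eqref{volest2} on the peripheral pieces. The only cosmetic difference is that you cite Corollary~\ref{bddetclass} rather than Proposition~\ref{specbound} for acyclicity on $M_R$, which is equivalent.
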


\begin{proof}
Proposititons \ref{l2nov1} and  \ref{specbound} tell us that both $\Delta_p^\perp[E^\rho_{M_R}] = \Delta_p[E^\rho_{M_R}]$ and $\Delta_p^\perp[E^\rho] = \Delta_p[E^\rho]$. Moreover, since $R \geq 2$, we may apply Theorem \ref{heat} to find appropriate constants $c,C_1,C_2 > 0$ independent of $R$ and $t$, so that \begin{equation*}  |\tr(\heatR)(x,x) - \tr(\heatH)(x,x)|  \leq C_1e^{-\frac{R^2}{C_2t}} \end{equation*} for all $x \in \mathcal F_{R/2}$. Together with \ref{volest1} and \ref{volest2}, this implies that \begin{align*} & | \vntr(\heatR) - \vntr(\heatH) | = | \int_{\mathcal F_R} \tr(\heatR)(x,x) dx - \int_{\mathcal F} \tr(\heatH)(x,x) | \\ & \leq 
\int_{\mathcal F_{R/2}} |\tr(\heatR)(x,x) - \tr(\heatH)(x,x)| dx + \int_{\mathcal F_R - \mathcal  F_{R/2}}   |\tr(\heatR)(x,x)| dx  \\ & + \int_{\mathcal F - \mathcal F_{R/2}} |\tr(\heatH)(x,x)| dx  
 \leq \Vol(\mathcal F_{R/2}) C_1 e^{-\frac{R^2}{C_2t}} + \Vol(\mathcal F \setminus \mathcal F_{R/2}) c. \xrightarrow{R \to \infty} 0. \end{align*} 
\end{proof}

\begin{lemma}\label{heatbound} There exists a positive function $G \in C^0(1,\infty) \cap L^1(1,\infty)$, such that for all $t \geq 1$ and all sufficiently large $R >> 0$, we have \begin{equation} t^{-1}\vntr(e^{-t\Delta_p^{\perp}[E_{M_R}]}) \leq G(t). \end{equation}\end{lemma}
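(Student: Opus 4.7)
The plan is to exploit the spectral decomposition of the heat operator together with the uniform polynomial spectral bound from Proposition \ref{specbound}. By Proposition \ref{specbound}(1), $\Delta_p^\perp[E^\rho_{M_R}] = \Delta_p[E^\rho_{M_R}]$, so I write
\[ \vntr(e^{-t\Delta_p^\perp[E^\rho_{M_R}]}) = \int_0^\infty e^{-t\lambda}\, dF_p(\lambda), \]
where $F_p(\lambda) := F(\Delta_p[E^\rho_{M_R}], \lambda)$. I split the integral at $\epsilon := C^{-1}$, with $C$ and $\beta$ the constants from Proposition \ref{specbound}.

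For the low-spectrum piece I would integrate by parts. Since $F_p(0) = 0$ by acyclicity and $F_p(\lambda) \leq C\lambda^\beta$ on $[0,\epsilon]$ uniformly in $R$, I obtain
\[ \int_0^\epsilon e^{-t\lambda}\, dF_p(\lambda) = e^{-t\epsilon} F_p(\epsilon) + t\int_0^\epsilon e^{-t\lambda} F_p(\lambda)\, d\lambda \leq C\epsilon^\beta e^{-t\epsilon} + C\Gamma(\beta+1)\, t^{-\beta}, \]
where the last step extends the $\lambda$-integral to $[0,\infty)$ via the standard Gamma identity. For the high-spectrum piece, I use that for $t\geq 1$ and $\lambda \geq \epsilon$ one has $e^{-t\lambda} \leq e^{-(t-1)\epsilon}\, e^{-\lambda}$, which gives
\[ \int_\epsilon^\infty e^{-t\lambda}\, dF_p(\lambda) \leq e^{-(t-1)\epsilon}\, \vntr(e^{-\Delta_p[E^\rho_{M_R}]}). \]
By Proposition \ref{heatconv} at $t=1$, the numbers $\vntr(e^{-\Delta_p[E^\rho_{M_R}]})$ converge to the finite quantity $\vntr(e^{-\Delta_p[E^\rho]})$ as $R \to \infty$, so they are bounded by some constant $K$ for all sufficiently large $R$.

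Combining these, dividing by $t$, and setting
\[ G(t) := C\epsilon^\beta\, t^{-1} e^{-t\epsilon} + C\Gamma(\beta+1)\, t^{-\beta-1} + K\, t^{-1} e^{-(t-1)\epsilon} \]
gives a continuous positive function on $(1,\infty)$ that dominates $t^{-1} \vntr(e^{-t\Delta_p^\perp[E^\rho_{M_R}]})$ for all large $R$; it lies in $L^1(1,\infty)$ because $\beta > 0$ makes the polynomial term integrable at infinity while the exponential terms decay rapidly. The only point requiring any care is ensuring that all constants entering $G$ are independent of $R$: this is built into Proposition \ref{specbound} for $C,\beta$, and into Proposition \ref{heatconv} (convergence forces uniform boundedness) for $K$. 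The main conceptual step, and the one that does the real work, is the uniform polynomial Novikov–Shubin-type bound of Proposition \ref{specbound}(2); everything else is a standard spectral-decomposition manipulation.
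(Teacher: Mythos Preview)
Your proof is correct and follows essentially the same approach as the paper: split the spectral integral at a threshold $\epsilon$, integrate by parts on $[0,\epsilon]$ using the uniform polynomial bound from Proposition \ref{specbound}, bound the tail by $e^{-(t-1)\epsilon}\vntr(e^{-\Delta_p[E^\rho_{M_R}]})$, and invoke Proposition \ref{heatconv} at $t=1$ to control that trace uniformly in $R$. The only cosmetic difference is that you push the low-spectrum integral out to infinity and use the Gamma identity to get an explicit $t^{-\beta}$ decay, whereas the paper leaves $G$ containing the raw integral $\int_0^\epsilon e^{-t\lambda}\lambda^\beta\,d\lambda$; both yield an $L^1$ majorant.
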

\begin{proof} Throughout the proof, we will abbreviate \begin{equation} F_R^p(\lambda) \coloneqq F_p(\Delta_p[E^\rho_{M_R}],\lambda). \end{equation} We compute that one has for any $R > 0$, any $t \geq 1$ and arbitrary $\epsilon > 0$
\begin{align*}& \vntr(e^{-t\Delta_p^\perp}[E_{M_R}]) = \int_{0}^\infty e^{-t\lambda} dF_R^p(\lambda)   = \int_{0}^\epsilon e^{-t\lambda} dF_R^p(\lambda) + \int_{\epsilon}^\infty e^{-t\lambda} dF_R(\lambda) \\& \stackrel{F_R^p(0) = 0}{=} t \int_{0}^\epsilon e^{-t\lambda} F_R^p(\lambda) d\lambda + e^{-t\epsilon} F_R^p(\epsilon) +  \int_{\epsilon}^\infty e^{-t\lambda} dF_R(\lambda) \\& \stackrel{t > 1}{\leq}  t \int_{0}^\epsilon e^{-t\lambda} F_R^p(\lambda) d\lambda + e^{-t\epsilon} F_R^p(\epsilon) +  e^{-t\epsilon}\int_{\epsilon}^\infty e^{-\lambda+\epsilon} dF_R(\lambda) \\& = t \int_{0}^\epsilon e^{-t\lambda} F_R^p(\lambda) d\lambda + e^{-t\epsilon} F_R^p(\epsilon) +  e^{-t\epsilon}e^\epsilon \vntr(e^{-\Delta_p^\perp[E_{M_R}]}), \end{align*} and therefore in particular \begin{equation} t^{-1}\vntr(e^{-t\Delta_p^{\perp}[E^\rho_{M_R}]}) \leq \int_{0}^\epsilon e^{-t\lambda} F_R^p(\lambda) d\lambda+ \frac{e^{-t\epsilon}}{t}(F_R^p(\epsilon) + e^{\epsilon} \vntr(e^{-\Delta_p^{\perp}[{E^\rho_{M_R}]}})). \end{equation}
By Proposition \ref{heatconv} we find some $\delta > 0$, such that for all $R >> 0$, we have $\vntr(e^{-\Delta_p^{\perp}[E^\rho_{M_R}]}) \leq \vntr(e^{-\Delta^{\perp}_p[E^\rho]}) + \delta$. Similarly, by Proposition \ref{specbound}, we can choose $\epsilon, \beta > 0$ independently of $R$ and small enough, such that $F_R^p(\lambda) \leq \epsilon^{-1} \lambda^{\beta}$ for all $\lambda \leq \epsilon$. From this, it becomes obvious that the function
\begin{equation} G(t) \coloneqq \epsilon^{-1} \int_{0}^\epsilon e^{-t\lambda} \lambda^{\beta} d\lambda + \frac{e^{-t\epsilon}}{t}( \epsilon^{-1+\beta} + e^{\epsilon} ( \vntr(e^{-\Delta^{\perp}_p[E^\rho]}) + \delta) ) \end{equation} satisfies the assertions of our lemma. 
\end{proof}

Using Proposition \ref{heatconv}, Lemma \ref{heatbound} and Lebesgue's theorem of dominated convergence, we finally obtain the main result of this section:

\begin{theorem}[Large-time convergence]\label{LTC} We have \begin{equation} \lim_{R \to \infty} \int_{1}^\infty t^{-1} \vntr(\heatR) dt = \int_{1}^\infty t^{-1} \vntr(\heatH) dt. \end{equation} In particular, we obtain 
\begin{equation} \lim_{R \to \infty} \sum_{p=0}^n \int_{1}^\infty t^{-1} \vntr(\heatR) dt = \sum_{p=0}^n\int_{1}^\infty t^{-1} \vntr(\heatH) dt.  \end{equation} \end{theorem}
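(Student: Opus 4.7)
The plan is to assemble the theorem directly from the two technical preparations immediately preceding it, namely Proposition \ref{heatconv} (pointwise convergence of the heat traces for fixed $t\geq 1$) and Lemma \ref{heatbound} (a uniform integrable majorant on $[1,\infty)$), and then to invoke Lebesgue's dominated convergence theorem.

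First I would fix $0 \leq p \leq n$ and consider the family of functions $f_R(t) \coloneqq t^{-1}\vntr(e^{-t\Delta_p^\perp[E^\rho_{M_R}]})$ on $[1,\infty)$, together with the limit function $f(t) \coloneqq t^{-1}\vntr(e^{-t\Delta_p^\perp[E^\rho]})$. By Proposition \ref{heatconv}, $\vntr(e^{-t\Delta_p^\perp[E^\rho_{M_R}]}) \to \vntr(e^{-t\Delta_p^\perp[E^\rho]})$ as $R \to \infty$ for every fixed $t \geq 1$, so $f_R(t) \to f(t)$ pointwise on $[1,\infty)$. Next, Lemma \ref{heatbound} provides a function $G \in C^0(1,\infty) \cap L^1(1,\infty)$ and an $R_0 \gg 0$ such that $0 \leq f_R(t) \leq G(t)$ for all $t\geq 1$ and all $R \geq R_0$. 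In particular, the same bound holds for $f$ by pointwise passage to the limit, so $f \in L^1(1,\infty)$.

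With the pointwise convergence and the uniform $L^1$-domination in place, Lebesgue's dominated convergence theorem yields
\begin{equation*}
\lim_{R \to \infty} \int_{1}^\infty t^{-1} \vntr(\heatR) \, dt = \int_{1}^\infty t^{-1} \vntr(\heatH) \, dt,
\end{equation*}
which is the first claim. The second claim (convergence of the sum over $p$) follows by applying the first claim for each of the finitely many degrees $0 \leq p \leq n$ and using linearity of the limit. Since all hard work has already been done in Propositions \ref{heatconv}, \ref{specbound}, and Lemma \ref{heatbound} (whose proof combines the uniform spectral estimate with the ``not feeling the boundary'' principle), there is no serious obstacle at this final stage — the only point requiring minimal care is ensuring that the $L^1$-majorant $G$ is independent of $R$ for all large $R$, which is exactly what Lemma \ref{heatbound} guarantees.
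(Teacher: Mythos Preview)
Your proposal is correct and matches the paper's own argument exactly: the paper also derives Theorem \ref{LTC} in one line from Proposition \ref{heatconv}, Lemma \ref{heatbound}, and Lebesgue's dominated convergence theorem. Your write-up simply spells out the routine details that the paper leaves implicit.
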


\bibliographystyle{plain}
\bibliography{BibTeXCM}{}

\end{document}